\setlist[itemize]{leftmargin=25pt}
\setlist[enumerate]{leftmargin=25pt}
\newcommand{\leqnomode}{\tagsleft@true}
\newcommand{\reqnomode}{\tagsleft@false}
\newtheorem{theorem}{Theorem}[section]
\newtheorem{lemma}[theorem]{Lemma}
\newtheorem{prop}[theorem]{Proposition}
\newtheorem{cor}[theorem]{Corollary}
\theoremstyle{definition}
\newtheorem{definition}[theorem]{Definition}
\theoremstyle{remark}
\newtheorem{remark}[theorem]{Remark}
\newtheorem{example}[theorem]{Example}
\numberwithin{equation}{section}
\DeclareMathOperator{\osc}{osc}
\DeclareMathOperator{\loc}{loc}
\DeclareMathOperator{\BMO}{BMO}
\newcommand{\N}{\ensuremath{\mathbb{N}}}
\newcommand{\R}{\ensuremath{\mathbb{R}}}
\newcommand{\C}{\ensuremath{\mathbb{C}}}
\newcommand{\mc}{\mathcal}
\newcommand{\ms}{\mathscr}
\DeclarePairedDelimiter\abs{\lvert}{\rvert}
\DeclarePairedDelimiter\cbrace\{\}
\DeclarePairedDelimiter\ha()
\DeclarePairedDelimiter{\ip}\langle\rangle
\DeclarePairedDelimiter{\nrm}\lVert\rVert
\newcommand{\nrmb}[1]{\bigl\|#1\bigr\|}
\newcommand{\absb}[1]{\bigl|#1\bigr|}
\newcommand{\hab}[1]{\bigl(#1\bigr)}
\newcommand{\cbraceb}[1]{\bigl\{#1\bigr\}}
\newcommand{\ipb}[1]{\bigl\langle#1\bigr\rangle}
\newcommand{\nrms}[1]{\Bigl\|#1\Bigr\|}
\newcommand{\has}[1]{\Bigl(#1\Bigr)}
\newcommand{\dd}{\hspace{2pt}\mathrm{d}}
\let \la=\lambda
\let \e=\varepsilon
\let \d=\delta
\let \o=\omega
\let \a=\alpha
\let \f=\varphi
\let \b=\beta
\let \g=\gamma
\let \O=\Omega
\let \si=\sigma
\let \ga=\gamma
\def\avint_#1{\mathchoice{\mathop{\kern 0.2em\vrule width 0.6em height 0.69678ex depth -0.58065ex \kern -0.8em \intop}\nolimits_{\kern -0.4em#1}}{\mathop{\kern 0.1em\vrule width 0.5em height 0.69678ex depth -0.60387ex \kern -0.6em \intop}\nolimits_{#1}} {\mathop{\kern 0.1em\vrule width 0.5em height 0.69678ex depth -0.60387ex \kern -0.6em \intop}\nolimits_{#1}} {\mathop{\kern 0.1em\vrule width 0.5em height 0.69678ex depth -0.60387ex \kern -0.6em \intop}\nolimits_{#1}}}
\begin{document}
\title[Bloom weighted bounds for sparse forms]
{Bloom weighted bounds for sparse forms associated to commutators}

\author[A.K. Lerner]{Andrei K. Lerner}
\address[A.K. Lerner]{Department of Mathematics,
Bar-Ilan University, 5290002 Ramat Gan, Israel}
\email{lernera@math.biu.ac.il}

\author[E. Lorist]{Emiel Lorist}
\address[E. Lorist]{Delft Institute of Applied Mathematics\\
Delft University of Technology \\ P.O. Box 5031\\ 2600 GA Delft\\The
Netherlands}
\email{e.lorist@tudelft.nl}

\author[S. Ombrosi]{Sheldy Ombrosi}
\address[S. Ombrosi]{Departamento de Análisis Matemático y Matemática Aplicada\\ Universidad Complutense (Spain) \&
Departamento de Matemática e Instituto de Matemática. Universidad Nacional del Sur - CONICET Argentina}

\email{sombrosi@ucm.es}

\thanks{The first author was supported by ISF grant no. 1035/21. The third author was supported by FONCyT PICT 2018-02501 \& by Spanish Government Ministry
of Science grant PID2020-113048GB-I00.}

\begin{abstract}
In this paper we consider bilinear sparse forms intimately related to iterated commutators of a rather general class of operators.
We establish Bloom weighted estimates for these forms in the full range of exponents, both in the diagonal and off-diagonal cases.
As an application, we obtain new Bloom bounds for commutators of (maximal) rough homogeneous singular integrals and the Bochner-Riesz operator
at the critical index.

We also raise the question about the sharpness of our estimates. In particular we obtain the surprising fact that even in the case of Calder\'on--Zygmund operators,
the previously known quantitative Bloom weighted estimates are not sharp for the second and higher order commutators.
\end{abstract}

\keywords{Iterated commutators, bilinear sparse forms, Bloom weighted bounds.}

\subjclass[2020]{42B20, 42B25, 47B47}


\maketitle
\section{Introduction}
Let ${\mathcal S}$ be a sparse family of dyadic cubes, let $b\in L^1_{\text{loc}}({\mathbb R}^n)$, $m\in {\mathbb N}$ and $1\le r<s\le \infty$.
The key object of this paper is the bilinear sparse form defined by
$${\mathcal B}_{{\mathcal S},b,r,s}^m(f,g):=\sum_{Q\in {\mathcal S}}\big\langle |b-\langle b\rangle_Q|^{m}|f|\big\rangle_{r,Q}\langle |g|\rangle_{s',Q}|Q|.$$
This object appears naturally when one studies iterated commutators of various operators $T$ and pointwise multiplication by a function $b$.

Let $r<p,q<s$ and let $\mu, \lambda$ be weights. Our goal is to obtain quantitative weighted $L^p(\mu)\times L^{q'}(\la^{1-q'})$-bounds for ${\mathcal B}_{{\mathcal S},b,r,s}^m$ in the Bloom setting \cite{Blo85} both in the diagonal
and off-diagonal cases. By the Bloom setting one means that an assumption on $b$ is imposed in terms of the Bloom weight $\nu$ depending on $\mu$ and $\la$ in a suitable way.

In the following cases the Bloom bounds for ${\mathcal B}_{{\mathcal S},b,r,s}^m$ have been considered before:
\begin{itemize}
\item $r=1$, $s=\infty$, $m\ge 1$ and $p=q$ \cite{LOR17, LOR19}.
\item $r>1$, $s=\infty$, $m\ge 1$ and $p=q$ \cite{Li22}.
\item $r=1$, $s=\infty$, $m=1$ and $p\neq q$  \cite{HLS23,HOS23}.
\end{itemize}
Our results below are quantitative and cover all possible combinations of $1\le r<p,q<s\le\infty$ and $m\ge 1$. In particular, the bounds we obtain are new in the following settings:
\begin{itemize}
\item Limited range: $r>1$ or $s<\infty$.
\item Iterated and off-diagonal: $m>1$ and  $p\not=q$.
\end{itemize}

In the Bloom setting, prior works have been primarily focused on estimates for commutators of Calder\'on--Zygmund operators, which by \cite{LOR17, LOR19} boil down to estimates for ${\mathcal B}_{{\mathcal S},b,1,\infty}^m$. The study of the boundedness in the case $m=1$
for these operators in the full range $p,q \in (1,\infty)$ has recently been completed by H\"anninen--Sinko and the second author \cite{HLS23}.
For a comprehensive overview of the development of both unweighted and (Bloom) weighted estimates for these commutators, as well a discussion on the necessity of the conditions on $b$, we direct the reader to the introductions of, e.g.,  \cite{HLS23,Hyt21, HOS23}.

Our key application is a quantitative Bloom weighted estimate for iterated commutators of a rather general class of operators.
This class includes, for example, Calder\'on--Zygmund operators, (maximal) rough homogeneous singular integral operators and Bochner--Riesz operators at the critical index. We refer to \cite[Remark 4.4]{LLO22} for a further list of operators that fall within the scope of our theory. Note that for various operators on this list, no Bloom weighted (or even unweighted) commutator estimates were known previously.

 Our approach will build upon a sparse domination procedure for commutators developed by Rivera-R\'ios, the first and third authors \cite{LOR17}, with subsequent generalizations by various authors. We will proceed in 3 steps:
 \begin{enumerate}[(1)]
 \item\label{it:plan1} In Section \ref{sec:sparsedom}, we will prove that iterated commutators of certain sublinear operators $T$ can be dominated by two sparse forms: ${\mathcal B}_{{\mathcal S},b,r,s}^m(f,g)$ and the dual form ${\mathcal B}_{{\mathcal S},b,s',r'}^m(g,f)$. Our key novel point here is in Lemma \ref{two}, which allows one to reduce $m+1$ sparse forms to only 2 sparse forms.
 \item\label{it:plan2} We prove Bloom weighted estimates for ${\mathcal B}_{{\mathcal S},b,r,s}^m(f,g)$. To do so, we first extend a result of Li \cite{Li17} and Fackler--Hyt\"onen~\cite{FH18} to certain fractional sparse forms in Section \ref{sec:fracsparse}. Afterwards, in Section~\ref{sec:sparseB}  we combine the proof strategy of H\"anninen, Sinko and the second author \cite{HLS23} with the change of measure formula of Cascante--Ortega--Verbitsky \cite{COV04} to estimate ${\mathcal B}_{{\mathcal S},b,r,s}^m(f,g)$ in the Bloom setting.

      Furthermore, in the case $q\leq p$ and $m \geq 2$, we provide a second Bloom weighted estimate for ${\mathcal B}_{{\mathcal S},b,r,s}^m(f,g)$ using a proof strategy suggested by Li \cite{Li22}. In particular, Theorem \ref{pleq} presents two incomparable quantitative bounds based on the approaches from \cite{LOR19} and \cite{Li22}.
 \item Combining the first two steps, in Section \ref{sec:application}, we obtain quantitative Bloom weighted estimates for iterated commutators. We apply this result to Calder\'on--Zygmund operators, (maximal) rough homogeneous singular integral operators and Bochner--Riesz operators at the critical index.
 \end{enumerate}

\medskip
Since our estimates are quantitative, it is natural to ask about their sharpness. Here we encounter with an interesting phenomenon, which is new even for Calder\'on--Zygmund operators.
To be more precise, in~\cite{LOR19}, the first and the third authors jointly with Rivera-R\'ios showed for a Calder\'on--Zygmund operator $T$ and for $m\ge 1$ (extending their previous work \cite{LOR17} for $m=1$) that
\begin{equation}\label{lor19}
\|T_b^m\|_{L^p(\mu)\to L^p(\la)}\lesssim \|b\|_{{\BMO}_{\nu}}^m([\la]_{A_p}[\mu]_{A_p})^{\frac{m+1}{2}\max(1,\frac{1}{p-1})},
\end{equation}
where $\BMO_{\nu}$ stands for the weighed $\BMO$ space with weight $\nu:=(\mu/\la)^{1/pm}$, and $T_b^m$ is the $m$-th order commutator of $T$ with a locally integrable function~$b$.

Intuitively, one could conjecture that (\ref{lor19}) is sharp, since in the case of equal weights
$\la=\mu=w$ we obtain the sharp one-weight estimate proved in \cite{CPP12} by Chung, Pereyra and P\'erez. However, following this intuition, any bound by $[\la]_{A_p}^{\a_p}[\mu]_{A_p}^{\b_p}$ with $\a_p$ and $\b_p$ satisfying $$\a_p+\b_p=(m+1)\max(1,\tfrac{1}{p-1})$$ would be sharp.

Observe that the notion of {\it sharpness} in the Bloom setting (or in
the two-weight setting, in general) has not been defined before. It is easy to see that a bound by $[\la]_{A_p}^{\a_p}[\mu]_{A_p}^{\b_p}$ is stronger than a bound by $[\la]_{A_p}^{\a'_p}[\mu]_{A_p}^{\b'_p}$
if and only if $\a_p\le \a'_p$ and $\b_p\le \b'_p$ and at least one of these inequalities is strict. In the case if, for example, $\a_p<\a'_p$ and $\b_p>\b'_p$,
the bounds will be incomparable. This leads us to the following definition.

\begin{definition}\label{sharp}
Let $p\in (1,\infty)$, $\mu,\lambda \in A_p$ and let $T$ be an operator. We say that the estimate
$$
\|T\|_{L^p(\mu)\to L^p(\la)}\lesssim [\la]_{A_p}^{\a_p}[\mu]_{A_p}^{\b_p}
$$
is sharp if neither of the exponents $\a_p$ and $\b_p$ can be decreased.
\end{definition}

Having this definition at hand, we are ready to present our result about the sharpness of (\ref{lor19}). This result comes as a surprise to us because it says that
the estimate (\ref{lor19}) is sharp for all $1<p<\infty$ only if $m=1$. To be more precise, we have the following.

\begin{theorem}\label{shm1}
Let $T$ be a Dini-continuous Calder\'on--Zygmund operator.
\begin{enumerate}[(i)]
\item If $m=1$, then the estimate (\ref{lor19}) is sharp for all $p \in (1,\infty)$.
\item If $m\ge 2$, the estimate (\ref{lor19}) is not
sharp for all $p\not\in [\frac{1+3m}{2m},\frac{1+3m}{m+1}]$.
\end{enumerate}
\end{theorem}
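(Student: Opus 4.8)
The plan is to treat the two parts quite differently. For part (i), the sharpness of \eqref{lor19} when $m=1$ means we must establish a lower bound matching $([\la]_{A_p}[\mu]_{A_p})^{\max(1,\frac1{p-1})}$, showing neither exponent on $[\la]_{A_p}$ nor on $[\mu]_{A_p}$ can be lowered below $\max(1,\frac1{p-1})$. I would extract this from the known sharpness of the one-weight bound in \cite{CPP12}: specialize to $\la=\mu=w$, note $\nu \equiv 1$ so $\|b\|_{\BMO_\nu}=\|b\|_{\BMO}$, and recall that the Chung--Pereyra--P\'erez example (e.g.\ the Hilbert transform with a power weight $|x|^{\a}$ and $b=\log|x|$) forces the exponent $2\max(1,\frac1{p-1})$ on $[w]_{A_p}$. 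Since any bound $[\la]_{A_p}^{\a_p}[\mu]_{A_p}^{\b_p}$ becomes $[w]_{A_p}^{\a_p+\b_p}$ on the diagonal, we get $\a_p+\b_p\ge 2\max(1,\frac1{p-1})$; to upgrade this to "neither exponent individually can be decreased" I would use a second, asymmetric example where only one of the two weights is a nontrivial power weight and the other is $\equiv 1$, forcing $\a_p \ge \max(1,\frac1{p-1})$ (and symmetrically $\b_p$). For $m=1$ such an asymmetric example should be constructible by a duality/transference argument or directly, since the commutator $[b,T]$ is (formally) self-adjoint up to sign.

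For part (ii), the point is the opposite: for $m\ge 2$ and $p$ outside the interval $[\frac{1+3m}{2m},\frac{1+3m}{m+1}]$ we must \emph{improve} on \eqref{lor19}, i.e.\ produce an upper bound $[\la]_{A_p}^{\a_p}[\mu]_{A_p}^{\b_p}$ with $\a_p+\b_p < (m+1)\max(1,\frac1{p-1})$ — in particular our new sparse-form estimates must beat the old one somewhere. Here I would invoke the machinery set up earlier: reduce via the sparse domination of Section~\ref{sec:sparsedom} to $\mathcal B^m_{\mathcal S,b,1,\infty}$ and its dual, then apply the two incomparable Bloom estimates for $\mathcal B^m_{\mathcal S,b,r,s}$ from Theorem~\ref{pleq} (with $r=1$, $s=\infty$, $p=q$, $\nu=(\mu/\la)^{1/(pm)}$). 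One of those two bounds comes from the \cite{LOR19} strategy and essentially reproduces \eqref{lor19}; the other, via the \cite{Li22} strategy, should give a different split of the total exponent between $[\la]_{A_p}$ and $[\mu]_{A_p}$ — and crucially a \emph{smaller total}, $\a_p + \b_p < (m+1)\max(1,\frac1{p-1})$, once $p$ is far enough from the balance point. The arithmetic of comparing the two exponent pairs is exactly what produces the threshold $\frac{1+3m}{2m}$ (the case $p$ small, where $\max(1,\frac1{p-1})=\frac1{p-1}$) and $\frac{1+3m}{m+1}$ (the case $p$ large, where $\max=1$); I would verify that when $m=1$ these two numbers coincide (both equal $2$), so the interval degenerates to a point and no improvement is possible, consistent with part (i).

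The main obstacle, and where I expect the real work to lie, is part (i): proving that \eqref{lor19} cannot be improved for \emph{every} $p\in(1,\infty)$ requires the asymmetric lower-bound examples that pin down each exponent separately, not merely their sum, and constructing weights $\mu,\lambda$ with prescribed (distinct) $A_p$ characteristics together with a matching $\BMO_\nu$ function $b$ that simultaneously saturates the estimate is delicate — one must control $[\mu]_{A_p}$, $[\lambda]_{A_p}$, $\|b\|_{\BMO_\nu}$ and the operator norm of $[b,T]$ all at once. For part (ii) the obstacle is instead bookkeeping: one must carefully track the exponents coming out of Theorem~\ref{pleq} in both regimes $p<2$ and $p\ge 2$, confirm the claimed endpoints, and check that the new bound is genuinely \emph{not} stronger (and not weaker) than \eqref{lor19} inside the interval, i.e.\ that the two are incomparable there in the sense of Definition~\ref{sharp}, which is why the theorem only claims non-sharpness \emph{outside} it.
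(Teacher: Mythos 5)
Your overall plan is aligned with the paper: part (i) via asymmetric power-weight examples that pin down each exponent separately, and part (ii) by comparing the two incomparable bounds produced by Theorem~\ref{pleq}. For part (i) the paper does exactly what you outline, with $n=1$, $T=H$ the Hilbert transform, one example taking $\mu\equiv 1$ and $\lambda(x)=|x|^{(p-1)(1-\delta)}$ with $b=\nu=\lambda^{-1/p}$, and a symmetric example with $\lambda\equiv 1$; the duality $p\leftrightarrow p'$ reduces the matter to $1<p\le 2$ so that $\max(1,\tfrac1{p-1})=\tfrac1{p-1}$. Your intermediate step of invoking the CPP diagonal lower bound is not needed (and, as you correctly note, by itself only constrains the sum $\a_p+\b_p$).

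For part (ii), however, there is a genuine error in your proposed mechanism. You claim the alternative (Li-type) bound achieves a \emph{smaller total} $\a_p+\b_p<(m+1)\max(1,\tfrac1{p-1})$. It does not: for $p\ge 2$ and $m\ge 2$ the second bound of Theorem~\ref{pleq} (after the trivial domination used in the paper) reads
$$
\|T_b^m\|_{L^p(\mu)\to L^p(\la)}\lesssim [\mu]_{A_p}^{\max(1,\frac{m}{p-1})}[\la]_{A_p}^{m},
$$
so the exponent on $[\la]_{A_p}$ is $m>\tfrac{m+1}{2}$ and the total is $m+\max(1,\tfrac{m}{p-1})$, which equals $m+1$ for $p\ge m+1$ and is \emph{strictly larger} than $m+1$ for $2\le p<m+1$. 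The two bounds are incomparable, not totally ordered. What the paper actually uses is that the exponent on $[\mu]_{A_p}$ alone drops below $\tfrac{m+1}{2}$ precisely when $p>\tfrac{1+3m}{m+1}$ (symmetrically, the exponent on $[\la]_{A_p}$ drops below $\tfrac{m+1}{2(p-1)}$ when $p<\tfrac{1+3m}{2m}$). Definition~\ref{sharp} asks whether \emph{either} individual exponent can be decreased, which is exactly what this gives. Relatedly, your remark that inside the interval the two bounds are "incomparable in the sense of Definition~\ref{sharp}" is inaccurate: for $p\in[\tfrac{1+3m}{2m},\tfrac{1+3m}{m+1}]$ both exponents of the derived product-form alternative bound dominate those in~\eqref{lor19}, i.e.\ it is simply weaker there, which is why the argument says nothing about sharpness inside the interval. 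Your sanity check that for $m=1$ the two endpoints coincide at $2$ is correct.
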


In other words, if $m\ge 2$ and $p\not\in [\frac{1+3m}{2m},\frac{1+3m}{m+1}]$, the second Bloom weighted estimate obtained in Section \ref{sec:sparseB} is incomparable with (\ref{lor19}), and therefore, combined with the result from  \cite{CPP12}, a sharp bound for $T_b^m$ in the
sense of Definition \ref{sharp} does not exist. Observe that Theorem \ref{shm1} leaves open an interesting question about the sharpness of (\ref{lor19}) when $m\ge 2$ and $p\in [\frac{1+3m}{2m},\frac{1+3m}{m+1}]$.

We shall see in Section 6 that a similar phenomenon with two incomparable bounds holds for a large class of operators.

\subsection*{Notation}
We will make extensive use of the notation ``$\lesssim$'' to indicate inequalities up to an implicit multiplicative constant. These implicit constants may depend on $p,q,n,m$, but not on any of the functions under consideration. If these implicit constants depend on the weights $\mu, \lambda$, this will be denoted by ``$\lesssim_{\mu,\lambda}$''.

\section{Preliminaries}\label{sec:prelim}
\subsection{Dyadic lattices} Denote by ${\mathcal Q}$ the set of all cubes $Q\subset {\mathbb R}^n$ with sides parallel to the axes. For a cube $Q \in \mc{Q}$ with side length $\ell(Q)$ and $\alpha>0$ we denote the cube with the same center as $Q$ and side length $\alpha\ell(Q)$ by $\alpha Q$.

Given a cube $Q\in {\mathcal Q}$, denote by
${\mathcal D}(Q)$ the set of all dyadic cubes with respect to $Q$, that is, the cubes obtained by repeated subdivision of $Q$ and each of its descendants into $2^n$ congruent subcubes.
Following \cite[Definition 2.1]{LN19}, a dyadic lattice ${\mathscr D}$ in ${\mathbb R}^n$ is any collection of cubes such that
\begin{enumerate}
\renewcommand{\labelenumi}{(\roman{enumi})}
\item Any child of $Q\in{\mathscr D}$ is in ${\mathscr D}$ as well, i.e. $\mc{D}(Q) \subseteq \ms{D}$.
\item
Any $Q',Q''\in {\mathscr D}$ have a common ancestor, i.e. there exists a $Q\in{\mathscr D}$ such that $Q',Q''\in {\mathcal D}(Q)$.
\item
For every compact set $K\subset {\mathbb R}^n$, there exists a cube $Q\in {\mathscr D}$ containing $K$.
\end{enumerate}
Throughout the paper, $\ms{D}$ will always denote a dyadic lattice.

\begin{definition} Let $\eta \in (0,1)$ and let ${\mathcal S}\subset \mc{Q}$ be a family of cubes. We say that ${\mathcal S}$ is $\eta$-sparse if, for every cube $Q\in {\mathcal S}$, there exists a subset $E_Q\subseteq Q$ such that
$|E_Q|\ge \eta|Q|$ and the sets $\{E_Q\}_{Q\in {\mathcal S}}$ are pairwise disjoint.
We will omit the sparseness number $\eta$ when its value is non-essential.
\end{definition}

For a cube $Q \in \mc{Q}$ and $f \in L^1_{\loc}(\R^n)$ we define $\ip{f}_Q:= \frac{1}{\abs{Q}} \int_Q f$ and for $r>0$ and a positive function $f \in L^r_{\loc}(\R^n)$ we set
$$
  \ip{f}_{r,Q}:= \ip{f^r}_{Q}^{1/r}= \has{\frac{1}{\abs{Q}} \int_Q f^r}^{1/r}.
$$
We define the maximal operator by
\begin{align*}
  Mf&:= \sup_{Q\in \mc{Q}} \,\ip{\abs{f}}_{1,Q} \chi_Q\intertext{and set}
  M_rf&:=M(\abs{f}^r)^{1/r}= \sup_{Q\in \mc{Q}} \ip{\abs{f}}_{r,Q} \chi_Q.
\end{align*}

\subsection{Weights} By a weight $w$ we mean a non-negative $w \in L^1_{\loc}(\R^n)$. For $1<p<\infty$ we say that $w$ belongs to the Muckenhoupt $A_p$-class and write $w \in A_p$ if
$$
[w]_{A_p}:=\sup_{Q\in {\mathcal Q}} \,\ip{w}_{1,Q} \ip{w^{-1}}_{\frac{1}{p-1},Q}<\infty.
$$
For $1 \leq r<\infty$ we say that $w$ belongs to reverse H\"older class and write $w \in {\rm{RH}_r}$ if
$$[w]_{{\rm RH}_r}:=\sup_{Q\in {\mathcal Q}}\frac{\langle w\rangle_{r,Q}}{\langle w\rangle_{1,Q}}<\infty.$$
Furthermore, we say that $w$ belongs to the Muckenhoupt $A_\infty$-class and write $w \in A_p$ if
$$
[w]_{A_{\infty}}:=\sup_{Q\in {\mathcal Q}}\frac{1}{w(Q)}\int_QM(w\chi_Q).$$

We will frequently use that by the definition of the $A_p$-constant, we have
\begin{equation}\label{dualityAp}
  [w^{1-p'}]_{A_{p'}}=[w]_{A_p}^{\frac{1}{p-1}}.
\end{equation}
Moreover, we have
\begin{equation*}
  [w]_{A_{\infty}}\le c_{n}[w]_{A_p},
\end{equation*}
by \cite[Proposition 2.2]{HP13}.

The following quantitative self-improvement lemma from \cite{HPR12} will play a key role in our applications.
\begin{prop}[{\cite[Theorem 1.1 and 1.2]{HPR12}}]\label{rhweights}
 There exists a constant $c_n>0$ such that
for $w \in A_p$ with $1<p<\infty$ we have
$$[w]_{{\rm{RH}}_{1+\frac{1}{c_n[w]_{A_{p}}}}}\leq c_n
\qquad\text{ and }\qquad [w]_{A_{p-\varepsilon}}\leq c_n \, [w]_{A_p}
$$
with $\varepsilon = \frac{p-1}{1+c_n[w]^{\ha{p-1}^{-1}}_{A_{p}}}$.
\end{prop}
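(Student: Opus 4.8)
The plan is to deduce both inequalities from the sharp quantitative reverse Hölder inequality for $A_\infty$ weights: the first is essentially a restatement of it, and the second follows by applying it to the dual weight together with Hölder's inequality. Throughout I would use the already-recalled bound $[w]_{A_\infty}\le c_n[w]_{A_p}$, so that everything can be phrased in terms of $[w]_{A_p}$.

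For the first estimate, I would invoke \cite[Theorem~1.1]{HPR12}, to the effect that every $w\in A_\infty$ satisfies $\langle w^{1+\delta}\rangle_Q^{1/(1+\delta)}\le 2\langle w\rangle_Q$ for all cubes $Q$ whenever $\delta\le \tfrac{1}{c_n[w]_{A_\infty}}$. The proof I have in mind fixes a cube $Q$, works with the maximal operator associated to the dyadic lattice $\mc D(Q)$ applied to $w\chi_Q$, and uses the Fujii--Wilson form of the $A_\infty$ constant built into the definition above, namely $\int_{Q'}M(w\chi_{Q'})\le [w]_{A_\infty}w(Q')$ for every cube $Q'$. A Calderón--Zygmund stopping-time decomposition of $w$ over $\mc D(Q)$ at geometrically increasing heights turns this $L^1$-bound into a recursion for the $w$-measure of the level sets of the localized maximal function; iterating the recursion $k$ times gives a bound of the shape
$$
  \frac1{w(Q)}\int_Q w\,\Big(\log^+\tfrac{w}{\langle w\rangle_Q}\Big)^k\le C_n^{\,k}\,k!\,[w]_{A_\infty}^{\,k},
$$
and substituting this into the expansion $(w/\langle w\rangle_Q)^{\delta}=\sum_{k\ge 0}\tfrac{\delta^k}{k!}(\log(w/\langle w\rangle_Q))^k$ yields a convergent geometric series, of sum at most $2w(Q)$, precisely when $\delta\le \tfrac1{c_n[w]_{A_\infty}}$. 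Combined with $[w]_{A_\infty}\le c_n[w]_{A_p}$ this is exactly $[w]_{\mathrm{RH}_{1+1/(c_n[w]_{A_p})}}\le c_n$.

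For the second estimate I would apply the first one to the dual weight. By \eqref{dualityAp}, $\sigma:=w^{1-p'}$ lies in $A_{p'}$ with $[\sigma]_{A_{p'}}=[w]_{A_p}^{1/(p-1)}$, hence in $A_\infty$ with $[\sigma]_{A_\infty}\le c_n[w]_{A_p}^{1/(p-1)}$. The first part then supplies a $\delta>0$ with $\delta\sim \bigl(c_n[w]_{A_p}^{1/(p-1)}\bigr)^{-1}$ such that $\langle\sigma^{1+\delta}\rangle_Q^{1/(1+\delta)}\le 2\langle\sigma\rangle_Q$ for all $Q$. I would then choose $\varepsilon\in(0,p-1)$ with $\tfrac1{p-\varepsilon-1}=\tfrac{1+\delta}{p-1}$, i.e.
$$
  \varepsilon=(p-1)\frac{\delta}{1+\delta}=\frac{p-1}{1+c_n[w]_{A_p}^{1/(p-1)}},
$$
so that $w^{-1/(p-\varepsilon-1)}=\sigma^{1+\delta}$, and hence for every cube $Q$,
$$
  \langle w\rangle_Q\big\langle w^{-\frac1{p-\varepsilon-1}}\big\rangle_Q^{\,p-\varepsilon-1}
  =\langle w\rangle_Q\Big(\langle\sigma^{1+\delta}\rangle_Q^{1/(1+\delta)}\Big)^{p-1}
  \le 2^{p-1}\langle w\rangle_Q\langle\sigma\rangle_Q^{p-1}\le 2^{p-1}[w]_{A_p}.
$$
Taking the supremum over $Q$ gives $w\in A_{p-\varepsilon}$ with $[w]_{A_{p-\varepsilon}}\le 2^{p-1}[w]_{A_p}$; a sharper bookkeeping in the reverse Hölder step, as carried out in \cite{HPR12}, replaces the $p$-dependent factor by a dimensional constant $c_n$.

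The main obstacle sits entirely in the first estimate: one must obtain the factorial-type moment bound with the \emph{linear} dependence of $\delta^{-1}$ on $[w]_{A_\infty}$, whereas a naive iteration of the stopping-time argument only delivers a logarithmic — and hence non-sharp — dependence. Securing this correct scaling is precisely the contribution of \cite{HPR12}; by contrast, the passage from the reverse Hölder inequality to the open-ended property of $A_p$ is just the one-line Hölder computation above.
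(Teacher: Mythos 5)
The paper offers no proof here---Proposition~\ref{rhweights} is imported verbatim from Hyt\"onen--P\'erez--Rela \cite{HPR12}---so the relevant comparison is with that source, and your outline matches it in all essentials. For the second inequality, your deduction is exactly the argument in \cite[proof of Theorem~1.2]{HPR12}: apply the reverse H\"older inequality to the dual weight $\sigma=w^{1-p'}$, use $[\sigma]_{A_\infty}\le c_n[\sigma]_{A_{p'}}=c_n[w]_{A_p}^{1/(p-1)}$, and solve $\frac{1}{p-\varepsilon-1}=\frac{1+\delta}{p-1}$ for $\varepsilon$; your arithmetic is correct and reproduces the stated value of $\varepsilon$. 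For the first inequality, \cite[Theorem~1.1]{HPR12} does not literally pass through a factorial moment bound for $\log^{+}(w/\langle w\rangle_Q)$ as you sketch; it estimates $\frac{1}{|Q|}\int_Q w^{1+\delta}\le\frac{1}{|Q|}\int_Q\bigl(M_Q w\bigr)^{\delta}w$ directly via a layer-cake/weak-type argument for the localized dyadic maximal operator together with the Fujii--Wilson condition. Your route is an equivalent reformulation---the factorial moment bound is precisely the $L(\log L)^k$ dual expression of the same estimate---so this is a harmless stylistic variant, not a gap. One small correction to your closing remark: \cite[Theorem~1.2]{HPR12} itself gives $[w]_{A_{p-\varepsilon}}\le 2^{p-1}[w]_{A_p}$, with the reverse H\"older constant entering raised to the power $p-1$, so the $p$-dependence of the multiplicative constant cannot be removed and your $2^{p-1}[w]_{A_p}$ is already the bound of the cited theorem; the paper's formulation ``$\le c_n[w]_{A_p}$'' simply absorbs that factor into the implicit constants, which by the Notation conventions of this paper are allowed to depend on $p$.
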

\section{A sparse domination principle for commutators}\label{sec:sparsedom} In this section we will prove a general sparse domination principle for iterated commutators, following the line of research started in \cite{LOR17} by Rivera-R\'ios  and the first and the third authors. In order to state our result, let us introduce some notation.

Given a linear operator $T$ and $b \in L^1_{\loc}(\R^n)$, define the first order commutator $T_b^1$ by
$$T_b^1(f):= bT(f)-T(bf).$$
Next, for $m\in {\mathbb N}, m\ge 2$, define higher order commutators $T_b^m$ inductively by
$$T_b^m(f):=T_b^1(T_b^{m-1}(f)).$$
It is easy to see that
\begin{equation}\label{commformula}
T_b^mf(x)= T\hab{(b(x)-b(\cdot))^mf}(x), \qquad x \in \R^n.
\end{equation}

Assume now that $T$ is a general, not necessarily linear, operator. Then we use formula (\ref{commformula}) as the {\it definition} of $T_b^m$.

For $1\leq s\le \infty$ we define the sharp grand maximal truncation operator
$$
{\mathcal M}^{\#}_{T,s}f(x):=\sup_{Q\ni x}\osc_s\big(T(f\chi_{{\mathbb R}^n\setminus 3Q});Q\big), \qquad x \in \R^n,
$$
where
$$\osc_s(f;Q):=\has{\frac{1}{\abs{Q}^2}\int_{Q\times Q}|f(x')-f(x'')|^s\dd x'\dd x''}^{1/s}$$
and the supremum is taken over all $Q \in \mc{Q}$ containing $x$.

We will use the following boundedness property of $T$ and $\mc{M}_{T,s}^{\#}$.

\begin{definition}\label{localweak}
Given an operator $T$ and $r \in [1,\infty)$, we say that $T$ is \emph{locally weak $L^r$-bounded} if there exists a non-increasing function $\f_{T,r}\colon (0,1) \to [0,\infty)$ such that
for any cube $Q\in \mc{Q}$ and $f \in L^r(Q)$ one has
$$
\absb{\cbraceb{x \in Q:|T(f \chi_Q)(x)|>\varphi_{T,r}(\lambda)\ip{|f|}_{r,Q}}} \leq \lambda\, \abs{Q}, \qquad \lambda \in (0,1).
$$
\end{definition}

This definition was given in \cite{LO20} and was called the $W_r$ property of~$T$. Note that the usual weak $L^r$-boundedness of $T$ implies the local weak $L^r$-boundedness of $T$ with
$$\varphi_{T,r}(\lambda) := \lambda^{-1/r}\nrm{T}_{L^r(\R^n) \to L^{r,\infty}(\R^n)}, \qquad \lambda \in (0,1).$$
Moreover, if $T$ is locally weak $L^{r_0}$-bounded for some $r_0 \in [1,\infty)$, it is locally weak $L^r$-bounded for all $r>r_0$ by H\"older's inequality
with $\varphi_{T,r}(\lambda)=\varphi_{T,r_0}(\lambda)$.

The main result of this section is the following abstract sparse domination principle for iterated commutators.

\begin{theorem}\label{sdp} Let $1\le r<s\le\infty, m\in {\mathbb N}$ and let $T$ be a sublinear operator.
Assume that $T$ and ${\mathcal M}^{\#}_{T,s}$ are locally weak $L^r$-bounded. Then there exist $C_{m,n}>1$ and $\la_{m,n}<1$ so that,
for any $f,g\in L^{\infty}_c({\mathbb R}^n)$ and $b\in L^1_{\text{loc}}({\mathbb R}^n)$, there is a $\frac{1}{2\cdot 3^n}$-sparse collection of cubes
${\mathcal S}$ such that
\begin{align*}
\int_{{\mathbb R}^n}|T_b^mf||g|&\le C\Big(\sum_{Q\in {\mathcal S}}\big\langle |b-\langle b\rangle_Q|^{m}|f|\big\rangle_{r,Q}\big\langle|g|\big\rangle_{s',Q}|Q|\\
&\hspace{1cm}+\sum_{Q\in {\mathcal S}}\big\langle |f|\big\rangle_{r,Q}\big\langle |b-\langle b\rangle_Q|^{m}|g|\big\rangle_{s',Q}|Q|\Big),
\end{align*}
where
\begin{equation}\label{constant}
C:=C_{m,n}\big(\f_{T,r}(\la_{m,n})+\f_{\mc{M}^{\#}_{T,s},r}(\la_{m,n})\big).
\end{equation}
\end{theorem}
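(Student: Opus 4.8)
The plan is to adapt the by-now-standard recursive sparse domination machinery (à la Lerner, and Lerner–Ombrosi–Rivera-Ríos \cite{LOR17}) to the commutator $T_b^m$, with the new ingredient being a bookkeeping device (promised as Lemma \ref{two}) that collapses the $m+1$ sparse forms produced by the binomial expansion into the two forms appearing in the statement. First I would fix $f,g \in L^\infty_c(\R^n)$ and a cube $Q_0$ containing $3\,\mathrm{supp}(f)$; working inside the dyadic lattice $\ms D(Q_0)$, I will run a Calderón–Zygmund-type stopping time. For a fixed cube $Q$, consider the local quantity $\ip{|b-\ip{b}_Q|^m|f|}_{r,Q}$ together with the truncated operator $T(f\chi_{3Q})$; using that $T$ is locally weak $L^r$-bounded and that $\mc M^{\#}_{T,s}$ is locally weak $L^r$-bounded, there is a choice of threshold $\lambda_{m,n}<1$ and constant $C_{m,n}$ so that the set
$$
  E = \cbraceb{x\in Q : |T_b^m(f\chi_{3Q})(x)| > \tfrac{1}{2}C_{m,n}\big(\varphi_{T,r}(\lambda_{m,n})+\varphi_{\mc M^{\#}_{T,s},r}(\lambda_{m,n})\big)\sum_{k=0}^{m}\binom{m}{k}\ip{b}_{?}\cdots}
$$
has measure at most $\tfrac{1}{2\cdot 3^n}|Q|$ — here the key point is that $\mc M^{\#}_{T,s}$ controls the $L^s$-oscillation of $T(f\chi_{\R^n\setminus 3Q})$ on $Q$, which together with Hölder's inequality (pairing $L^s$ with $L^{s'}$ in the form) lets one compare the local average of $|g|$-weighted $|T_b^m f|$ on $Q$ to sums of products $\ip{|b-\ip b_Q|^k|f|}_{r,Q}\,\ip{|b-\ip b_Q|^{m-k}|g|}_{s',Q}|Q|$ plus the contribution from the children. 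I then iterate: the stopping cubes $\{Q_j\}$ of the next generation satisfy $\sum_j |Q_j| \le \tfrac12 |Q|$, so the collection $\mc S$ built generation by generation is $\frac{1}{2\cdot 3^n}$-sparse by the usual argument.

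The heart of the matter is step two: after expanding $b(x)-b(\cdot) = (b(x)-\ip b_Q) - (b(\cdot)-\ip b_Q)$ by the binomial theorem inside \eqref{commformula}, the naive output of the iteration is a sum over $k=0,\dots,m$ of sparse forms of mixed type $\mc B$ involving $|b-\ip b_Q|^k$ on the $f$-side and $|b-\ip b_Q|^{m-k}$ on the $g$-side. I expect this to be the main obstacle, and it is exactly where Lemma \ref{two} enters: one must show that each mixed form is dominated, after possibly passing to a refined (still sparse) subfamily, by $\mc B^m_{\mc S,b,r,s}(f,g) + \mc B^m_{\mc S,b,s',r'}(g,f)$. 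The mechanism is a second stopping-time argument on the oscillation of $b$ itself — splitting $\mc S$ according to whether $\ip{|b-\ip b_Q|}_{Q}$ is comparable to the "John–Nirenberg scale" of $b$ on a stopping ancestor — so that a factor $|b-\ip b_Q|^{m-k}$ on the $g$-side can be absorbed either into the ambient $\ip{|g|}_{s',Q}$ average (if $k=m$), or transferred to produce the pure form on the other side. This is the argument that allows reducing $m+1$ forms to $2$, and I would isolate it cleanly as a lemma so that the rest of the proof of Theorem \ref{sdp} is formal.

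With Lemma \ref{two} in hand, the proof concludes by assembling the pieces: the recursive estimate gives, for the principal cube $Q_0$,
$$
  \int_{Q_0}|T_b^m(f\chi_{3Q_0})||g| \;\le\; C\Big(\sum_{Q\in\mc S}\ip{|b-\ip b_Q|^m|f|}_{r,Q}\ip{|g|}_{s',Q}|Q| + \sum_{Q\in\mc S}\ip{|f|}_{r,Q}\ip{|b-\ip b_Q|^m|g|}_{s',Q}|Q|\Big),
$$
with $C$ as in \eqref{constant}; since $f$ has compact support one covers $\mathrm{supp}(f)$ by finitely many disjoint dyadic cubes of the lattice on which the above applies, and $T_b^m f = T_b^m(f\chi_{3Q_0})$ on each (up to terms already accounted for by the oscillation estimate via $\mc M^{\#}_{T,s}$), so summing over these cubes and relabelling the union of the sparse families (which remains $\frac{1}{2\cdot 3^n}$-sparse after a standard disjointification) yields the claim. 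The sublinearity of $T$ is used throughout to split $T(f\chi_{3Q}) $ from $T(f\chi_{\R^n\setminus 3Q})$ and to handle the passage to children; no linearity is needed since \eqref{commformula} is taken as the definition of $T_b^m$.
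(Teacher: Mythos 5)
Your outline of the recursive stopping-time argument (fix a cube $Q_0$ containing the supports, expand $T_b^m$ via the binomial theorem around $\ip{b}_{3Q}$, control the level sets of $T(\eta_k)$ and $\mc M^\#_{T,s}(\eta_k)$ using local weak $L^r$-boundedness, iterate on the stopping children, and collect the sparse family) is essentially the paper's proof of Theorem~\ref{sdp1} and is sound. The $\frac{1}{2\cdot 3^n}$-sparseness in the paper comes slightly differently than you describe — the stopping family $\mc F \subset \ms D(Q_0)$ built from the level-set argument is $\frac12$-sparse, and one then sets $\mc S = \{3P : P\in\mc F\}$, the factor $3^n$ coming from the dilation — but this is a cosmetic point.

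The genuine gap is in your proposed mechanism for Lemma~\ref{two}, which you correctly identify as the heart of the matter. You suggest a second stopping-time argument on the oscillation of $b$, comparing $\ip{|b-\ip b_Q|}_Q$ to a ``John--Nirenberg scale'' on a stopping ancestor, and you hedge that the mixed form would be controlled only after ``passing to a refined sparse subfamily.'' None of this is needed, and it is not clear the route you sketch would even close: a John--Nirenberg-type argument would typically cost you exponential factors or additional maximal truncations, and you would also have to track how the refinement interacts with the already-fixed sparse family from step one. The actual Lemma~\ref{two} is a purely elementary, cube-by-cube numerical inequality that never touches the sparse family. Writing
$$
\varphi(x,y):=|b(x)-\ip b_Q|^{m-k}|f(x)|\,|b(y)-\ip b_Q|^{k}|g(y)|\,\chi_{Q\times Q}(x,y),
$$
one has $c_k=\bigl\|x\mapsto\|y\mapsto\varphi(x,y)\|_{L^t(\frac{\ddn y}{|Q|})}\bigr\|_{L^r(\frac{\ddn x}{|Q|})}$, and the pointwise bound
$$
|b(x)-\ip b_Q|^{m-k}\,|b(y)-\ip b_Q|^{k}\le |b(x)-\ip b_Q|^{m}+|b(y)-\ip b_Q|^{m}
$$
combined with Minkowski's inequality gives $c_k\le c_0+c_m$ immediately, for every fixed cube $Q$ and every $0\le k\le m$. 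This is what lets one drop all intermediate terms in Theorem~\ref{sdp1} and retain only the $k=0$ and $k=m$ forms, with no change to $\mc S$. You should replace your proposed John--Nirenberg splitting by this two-line observation; as written, your argument does not establish the reduction from $m+1$ forms to $2$.
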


We refer to \cite[Remark 4.4]{LLO22} for a list of operators satisfying the assumptions of Theorem \ref{sdp}.
Theorem \ref{sdp} is an immediate corollary of the following two statements.

\begin{theorem}\label{sdp1} Under the assumptions of Theorem \ref{sdp} we have
$$\int_{{\mathbb R}^n}|T_b^mf||g|\le C\sum_{k=0}^m\Big(\sum_{Q\in {\mathcal S}}\big\langle |b-\langle b\rangle_Q|^{m-k}|f|\big\rangle_{r,Q}
\big\langle |b-\langle b\rangle_Q|^{k}|g|\big\rangle_{s',Q}|Q|\Big),
$$
where $C$ is given by \eqref{constant}
\end{theorem}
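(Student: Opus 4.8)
The plan is to follow the sparse domination scheme for commutators from \cite{LOR17, LOR19}, adapted to the sublinear and limited-range setting via the local weak $L^r$-boundedness hypothesis. The starting point is the binomial identity coming from \eqref{commformula}: writing $b(x)-b(y) = (b(x)-\langle b\rangle_Q) - (b(y)-\langle b\rangle_Q)$ and expanding the $m$-th power, for any cube $Q$ one gets a pointwise decomposition
$$
T_b^m(f\chi_{\R^n\setminus 3Q})(x) = \sum_{k=0}^m \binom{m}{k}(-1)^k (b(x)-\langle b\rangle_Q)^{m-k}\, T\hab{(b-\langle b\rangle_Q)^k f\chi_{\R^n\setminus 3Q}}(x).
$$
The term $(b-\langle b\rangle_Q)^k f$ plays the role of a ``new function'' to which the (sublinear) operator $T$ and its grand maximal truncation $\mc{M}^{\#}_{T,s}$ are applied, and the factor $(b(x)-\langle b\rangle_Q)^{m-k}$ will be absorbed into the average $\langle |b-\langle b\rangle_Q|^{m-k}|g|\rangle_{s',Q}$ on the output side.

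\textbf{The Calderón–Zygmund-type stopping time construction.} First I would run the standard recursive argument: fix a large cube $Q_0$ from a dyadic lattice containing the supports of $f$ and $g$, and build the sparse family $\mc{S}$ by selecting, inside each already-chosen cube $Q$, the maximal dyadic subcubes $Q'$ on which one of a finite list of averages (built from $\langle|f|\rangle_{r,Q}$, the local weak bound for $T$ applied to the various $(b-\langle b\rangle_Q)^k f$, and the local weak bound for $\mc{M}^{\#}_{T,s}$) exceeds $C_{m,n}$ times its value on $Q$. The hypotheses that $T$ and $\mc{M}^{\#}_{T,s}$ are locally weak $L^r$-bounded, together with the weak-type bound for $M_r$, guarantee that for a suitable choice of $\lambda_{m,n}<1$ the union of the selected children has measure at most $\tfrac12|Q|$, which gives the $\tfrac{1}{2\cdot 3^n}$-sparseness (the $3^n$ coming from passing between the lattice and the dilations $3Q$). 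On the complement of the selected children one controls $|T_b^m f|\cdot|g|$ locally on $Q$: the ``far'' part $T_b^m(f\chi_{\R^n\setminus 3Q})$ is handled by the oscillation estimate encoded in $\mc{M}^{\#}_{T,s}$ together with a value of $T_b^m(f\chi_{\R^n\setminus 3Q})$ at a good point of $Q$, and the ``near'' part $f\chi_{3Q}$ contributes a term that is directly comparable to $\langle|b-\langle b\rangle_{3Q}|^{m-k}|f|\rangle_{r,3Q}\langle|b-\langle b\rangle_{3Q}|^k|g|\rangle_{s',3Q}|3Q|$ — exactly the summands in the claimed bound, after the binomial expansion above distributes the powers of $b-\langle b\rangle$ between the two factors. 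Iterating over generations and summing the geometric series produced by the stopping-time threshold $C_{m,n}$ yields the sum over $\mc{S}$, with the constant $C$ of the stated form \eqref{constant}.

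\textbf{The main obstacle} is bookkeeping the powers of $b-\langle b\rangle_Q$ through the stopping-time recursion. When one passes from a cube $Q$ to a selected child $Q'$, the natural centering constant changes from $\langle b\rangle_Q$ to $\langle b\rangle_{Q'}$, and one must control $|\langle b\rangle_{Q'}-\langle b\rangle_Q|$ and telescope these differences along a chain of ancestors without losing the sparse structure; this is where the exponent $m$ really enters, since one is juggling $m+1$ distinct averaged quantities $\langle|b-\langle b\rangle_Q|^{m-k}|f|\rangle_{r,Q}$ simultaneously. The clean way around it is to prove the pointwise/local estimate on each fixed cube $Q$ purely in terms of $\langle b\rangle_{3Q}$ (not referencing ancestors at all), so that the recursion only needs to iterate a single scalar inequality of the form ``$\int_Q \approx (\text{local term on } Q) + \sum_{Q'\text{ selected}}\int_{Q'}$''; the interplay of the powers of $b-\langle b\rangle$ with different cubes is then confined to the local step, where the doubling $\langle|b-\langle b\rangle_{3Q}|^j\rangle_{t,Q}\lesssim \langle|b-\langle b\rangle_{3Q}|^j\rangle_{t,3Q}$ and Hölder's inequality handle it. With this organization the result is an immediate consequence, as the statement claims, of running the LOR17-style machine once, and Theorem \ref{sdp} follows by the binomial-coefficient collapse $\langle|b-\langle b\rangle_Q|^{m-k}|f|\rangle_{r,Q}\leq \langle|b-\langle b\rangle_Q|^{m}|f|\rangle_{r,Q}^{?}\cdots$ — more precisely by bounding each mixed term $k\in\{1,\dots,m-1\}$ by the two extreme terms $k=0$ and $k=m$ via Hölder, which is exactly the content of the forthcoming Lemma \ref{two}.
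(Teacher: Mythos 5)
Your proposal matches the paper's proof in all essential respects: the binomial expansion with recentering at $\langle b\rangle_{3P}$ on each cube, the Calderón--Zygmund stopping-time construction using $\Omega_k(P)$, $\mathcal{M}_k(P)$ and the $M_r$-level sets, the oscillation estimate via $\mathcal{M}^\#_{T,s}$ combined with evaluation at a good point, and the single-scale local recursion $\int_P\lesssim(\text{local term})+\sum_{P'}\int_{P'}$ that avoids any telescoping of the averages $\langle b\rangle_Q$. The ``obstacle'' you describe is circumvented exactly as you suggest (and as the paper does), since $T_b^m=T^m_{b-c}$ lets one refresh the centering constant at each generation.
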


\begin{lemma}\label{two} Let $1\le r,t<\infty$ and $m\in {\mathbb N}$. Let $f,g\in L^{\infty}_c({\mathbb R}^n)$ and $b\in L^1_{\text{loc}}({\mathbb R}^n)$. Fix a cube $Q \in \mc{Q}$ and for $0\le k\le m$ define
$$
c_k:=\big\langle |b-\langle b\rangle_Q|^{m-k}|f|\big\rangle_{r,Q}\big\langle |b-\langle b\rangle_Q|^{k}|g|\big\rangle_{t,Q}.
$$
Then we have $c_k\le c_0+c_m.$
\end{lemma}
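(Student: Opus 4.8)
The plan is to establish the pointwise inequality $c_k \le c_0 + c_m$ for the intermediate index $0 \le k \le m$ by interpolating between the two extreme quantities $c_0$ and $c_m$. The natural device is to write $m - k$ and $k$ as convex combinations of the "endpoint" exponent pairs $(m,0)$ and $(0,m)$, namely $m-k = \frac{m-k}{m}\cdot m + \frac{k}{m}\cdot 0$ and $k = \frac{m-k}{m}\cdot 0 + \frac{k}{m}\cdot m$, and then apply Hölder's inequality in each of the two averages defining $c_k$. Concretely, writing $w := |b - \langle b\rangle_Q|$, I would split
$$
w^{m-k}|f| = \has{w^m |f|}^{\frac{m-k}{m}}\,|f|^{\frac{k}{m}}, \qquad
w^{k}|g| = |g|^{\frac{m-k}{m}}\,\has{w^m |g|}^{\frac{k}{m}},
$$
and apply Hölder's inequality with exponents $\frac{m}{m-k}$ and $\frac{m}{k}$ inside the $\langle\,\cdot\,\rangle_{r,Q}$ and $\langle\,\cdot\,\rangle_{t,Q}$ averages respectively (when $k=0$ or $k=m$ the statement is trivial, so assume $0<k<m$). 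This yields
$$
\ipb{w^{m-k}|f|}_{r,Q} \le \ipb{w^m|f|}_{r,Q}^{\frac{m-k}{m}}\,\ipb{|f|}_{r,Q}^{\frac{k}{m}}, \qquad
\ipb{w^{k}|g|}_{t,Q} \le \ipb{|g|}_{t,Q}^{\frac{m-k}{m}}\,\ipb{w^m|g|}_{t,Q}^{\frac{k}{m}}.
$$

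Multiplying these two bounds and regrouping the factors according to the definitions of $c_0$ and $c_m$, one obtains
$$
c_k \le \hab{\ipb{w^m|f|}_{r,Q}\,\ipb{|g|}_{t,Q}}^{\frac{m-k}{m}}\hab{\ipb{|f|}_{r,Q}\,\ipb{w^m|g|}_{t,Q}}^{\frac{k}{m}} = c_0^{\frac{m-k}{m}}\,c_m^{\frac{k}{m}}.
$$
The proof then concludes with the elementary weighted AM--GM (Young) inequality $a^{\theta}b^{1-\theta} \le \theta a + (1-\theta)b \le a + b$ for $a,b\ge 0$ and $\theta\in[0,1]$, applied with $a = c_0$, $b = c_m$, $\theta = \frac{m-k}{m}$, giving $c_k \le c_0 + c_m$ as claimed.

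I do not expect a genuine obstacle here; the only points requiring a little care are the degenerate cases $k=0$ and $k=m$ (where one of the Hölder exponents is infinite and the inequality is an identity, so they should be dispatched first), and making sure the finiteness of all quantities is not an issue — but since $f,g \in L^\infty_c(\R^n)$ and $b \in L^1_{\loc}(\R^n)$, all the averages over the fixed cube $Q$ are finite, so Hölder's inequality applies without qualification. The essential mechanism is simply that the map $k \mapsto \log c_k$ is convex on $\{0,1,\dots,m\}$, and hence bounded above by its values at the endpoints.
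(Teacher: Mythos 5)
Your proof is correct and follows a different, arguably cleaner, route from the paper's.

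The paper's argument works at the level of the integrand: it rewrites $c_k$ as the mixed $L^r\hab{L^t}$-norm (over $Q\times Q$, with the normalized measures) of the tensor-product-type function
$\f(x,y)=|b(x)-\langle b\rangle_Q|^{m-k}|f(x)|\,|b(y)-\langle b\rangle_Q|^{k}|g(y)|\chi_{Q\times Q}(x,y)$,
then applies the \emph{pointwise} Young-type bound $a^{m-k}b^k\le a^m+b^m$ to the factors $a=|b(x)-\langle b\rangle_Q|$, $b=|b(y)-\langle b\rangle_Q|$, and finishes with the triangle (Minkowski) inequality for the mixed norm. You instead leave the two averages intact and apply H\"older's inequality \emph{inside} each of them with exponents $\frac{m}{m-k}$ and $\frac{m}{k}$, obtaining the log-convexity bound $c_k\le c_0^{\frac{m-k}{m}}c_m^{\frac{k}{m}}$, which you then feed into the weighted AM--GM inequality. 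The combinatorial core is the same Young-type inequality, but the mechanisms are genuinely different: the paper's version domination happens before the norms are taken (and hence needs the tensor-product representation plus Minkowski), while yours happens after (and hence needs H\"older). Your proof is slightly more self-contained in that it avoids the iterated-norm reformulation, and it delivers the stronger geometric-mean estimate $c_k\le c_0^{\frac{m-k}{m}}c_m^{\frac{k}{m}}$ as a byproduct; the paper's version is arguably more mechanical. Both are complete, and your handling of the degenerate cases $k\in\{0,m\}$ and the finiteness of the averages is appropriate.
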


Indeed, note that Lemma \ref{two} allows us to reduce the summation over $k=0,\dots,m$ in Theorem \ref{sdp1} to the two extreme terms $k=0$ and $k=m$, yielding the formulation of Theorem \ref{sdp}.

\medskip

Before turning to the proofs, let us mention a brief history of the above results.

\begin{itemize}
\item
In the case where $T$ is a Dini-continuous Calder\'on-Zygmund operator, $m=1$, $r=1$ and  $s=\infty$, Theorem \ref{sdp} goes back to Rivera-R\'ios  and the first and the third authors \cite{LOR17}.
\item
In the case where $m\ge 1$ and $T$ is a generalized H\"ormander singular integral operator, the corresponding version of Theorem \ref{sdp1} was obtained by
Iba\~nez-Firnkorn and Rivera-R\'ios \cite{IR20}.
\item
The closest precursors of Theorem \ref{sdp} were obtained by
\begin{itemize}
  \item Rivera-R\'ios \cite[Theorem 3.1]{R18} in the case $m=1$. We note that in this work  the bilinear maximal operator ${\mathcal M}_T(f,g)$, introduced in \cite{Le19},  was used instead of ${\mathcal M}^{\#}_{T,s}$.
  \item Iba\~nez-Firnkorn and Rivera-R\'ios \cite[Theorem 4.4]{IR22} in the case $m\ge 1$ and with ${\mathcal M}^{\#}_{T,\infty}$.
\end{itemize}
\item For a general account of similar sparse domination results we refer to our recent work \cite{LLO22}.
\end{itemize}

Comparing to \cite[Theorem 3.1]{R18} and \cite[Theorem 4.4]{IR22}, our novel points are the following.
\begin{itemize}
\item Instead of ${\mathcal M}_T(f,g)$ or ${\mathcal M}^{\#}_{T,\infty}$, we deal with a more flexible operator ${\mathcal M}^{\#}_{T,s}$.
Here we continue the line of research originated in \cite{LLO22,LO20,L21}, where various variants of ${\mathcal M}^{\#}_{T,s}$ were considered.
\item We use a \emph{local} weak $L^r$-boundedness assumption, originating from \cite{LO20}, rather than the usual weak $L^r$-boundedness assumption on $T$ and $M^{\#}_{T,s}$.
\item Our most important novel point in this section is in Lemma \ref{two}, which seems to be new. This lemma allows us to significantly  simplify the main applications of Theorem \ref{sdp} to quantitative weighted norm inequalities.
\end{itemize}

The proof of Lemma \ref{two} is quite elementary and we therefore present it first.

\begin{proof}[Proof of Lemma \ref{two}] For $x,y \in \R^n$ denote
$$\f(x,y):=|b(x)-\langle b\rangle_Q|^{m-k}|f(x)||b(y)-\langle b\rangle_Q|^{k}|g(y)|\chi_{Q\times Q}(x,y).$$
Then $c_k$ for $0\leq k\leq m$ can be written in the form
$$c_k=\Big\|x \mapsto\|y \mapsto \f(x,y)\|_{L^t(\frac{\dd y}{|Q|})}\Big\|_{L^r(\frac{\dd x}{|Q|})}.$$
From this, we obtain the conclusion by using the estimate
\begin{equation*}
|b(x)-\langle b\rangle_Q|^{m-k}|b(y)-\langle b\rangle_Q|^{k}\le |b(x)-\langle b\rangle_Q|^{m}+|b(y)-\langle b\rangle_Q|^{m}
\end{equation*}
along with Minkowski's inequality.
\end{proof}

Next we turn to the proof of Theorem \ref{sdp1}. Its proof is based on the well-known ideas developed in the previous works (e.g., \cite{Le19,LLO22,LO20,L21}).

\begin{proof}[Proof of Theorem \ref{sdp1}]
Let $Q \in \mc{Q}$ be a cube that contains the supports of $f$ and $g$. We will show that there exists a $\frac{1}{2}$-sparse family ${\mathcal F}\subset\mc{D}(Q)$ such that
\begin{equation}
\begin{aligned}
  &\int_{{\mathbb R}^n}|T_b^m(f)||g|=\int_{Q}|T_b^m(f\chi_{3Q})||g|\label{local}\\
&\hspace{0.1cm}\le C\sum_{k=0}^m\Big(\sum_{P\in {\mathcal F}}\big\langle |b-\langle b\rangle_{3P}|^{m-k}|f|\big\rangle_{r,3P}
\big\langle |b-\langle b\rangle_{3P}|^{k}|g|\big\rangle_{s',3P}|P|\Big),
\end{aligned}
\end{equation}
where $C$ is given by (\ref{constant}). Taking $\mc{S} = \cbrace{3P:P \in \mc{F}}$ afterwards yields the result.

We construct the family ${\mathcal F}\subset\mc{D}(Q)$ inductively. Set ${\mathcal F}_0=\{Q\}$. Next, given a collection of pairwise disjoint cubes ${\mathcal F}_j$, let
us describe how to construct ${\mathcal F}_{j+1}$.

Fix a cube $P\in {\mathcal F}_j$. For $k=0,\dots,m$ denote
$$\eta_k:=(b-\ip{b}_{3P})^kf\chi_{3P},$$
and consider the sets
$$\O_k(P):=\cbraceb{x\in P:|T(\eta_{k})(x)|>\f_{T,r}(\tfrac{1}{(m+1)6^{n+2}})\langle|\eta_k|\rangle_{r,3P}}$$
and
$${\mathcal M}_k(P):=\cbraceb{x\in P:|\mc{M}^{\#}_{T,s}(\eta_{k})(x)|>\f_{\mc{M}^{\#}_{T,s},r} (\tfrac{1}{(m+1)6^{n+2}})\langle|\eta_k|\rangle_{r,3P}}.$$
Then
$$|\O_k(P)|\le \tfrac{1}{(m+1)6^{n+2}}|3P|\leq\tfrac{1}{3(m+1)2^{n+2}}|P|,$$
and the same bound holds for $|{\mathcal M}_k(P)|$.
Since the maximal operator $M_r$ is  weak $L^r$-bounded with constant independent of $r$, there exists a $c_{n,m}>0$ such that
$$M_k(P):=\cbraceb{x\in P:M_r(\eta_{k})(x)>c_{n,m}\langle|\eta_k|\rangle_{r,3P}}$$
also satisfies
$$|M_k(P)|\le \tfrac{1}{3(m+1)2^{n+2}}|P|.$$
Therefore, setting
$$\O(P):=\bigcup_{k=0}^m\hab{\O_k(P)\cup {\mathcal M}_k(P)\cup M_k(P)},$$
we have $|\O(P)|\le \frac{1}{2^{n+2}}|P|$.

We apply the local Calder\'on--Zygmund decomposition to $\chi_{\O(P)}$ at height $\frac{1}{2^{n+1}}$. We obtain a family of
pairwise disjoint cubes $\mc{S}_P\subseteq \mc{D}(P)$ such that
$\abs{\Omega(P)\setminus\bigcup_{P' \in \mc{S}_P}P'}=0$ and for every $P'\in \mc{S}_P$,
\begin{equation}\label{cz}
\tfrac{1}{2^{n+1}}|P'|\le |P'\cap\Omega(P)|\le \tfrac{1}{2}|P'|.
\end{equation}
In particular, it follows  that
\begin{equation}\label{half}
\sum_{P'\in \mc{S}_P}|P'|\le 2^{n+1}|\O(P)|\le \tfrac12 |P|.
\end{equation}
We define  ${\mathcal F}_{j+1}=\cup_{P\in {\mathcal F}_j}\mc{S}_P$.
Setting ${\mathcal F}=\cup_{j=0}^{\infty}{\mathcal F}_j$, we note  by
\eqref{half} that ${\mathcal F}$ is $\frac12$-sparse.

Now, by iteration, to prove (\ref{local}) it suffices to show for $j \in \N$ and $P \in \mc{F}_j$ that
\begin{align*}
   \int_{P} \absb{T_b^m(f\chi_{3P})}|g| &\le C\sum_{k=0}^m\big\langle |b-\langle b\rangle_{3P}|^{m-k}|f|\big\rangle_{r,3P}
\big\langle |b-\langle b\rangle_{3P}|^{k}|g|\big\rangle_{s',3P}|P|
\\&\hspace{1cm}
    +\sum_{P' \in \mc{F}_{j+1}:P' \subseteq P}\int_{P'}\absb{T_b^m(f\chi_{3P'})}|g|,
\end{align*}
where $C$ is given by (\ref{constant}).
Set $F_j:=\cup_{P\in {\mathcal F}_j}P$.  Noting that
  \begin{align*}
    \int_{P} \absb{T_b^m(f\chi_{3P})}|g| &\leq \int_{P\setminus F_{j+1}} \absb{T_b^m(f\chi_{3P})}|g|\\
    &\hspace{1cm} +\sum_{P' \in \mc{F}_{j+1} : P' \subseteq P} \int_{P'} \absb{T_b^m(f\chi_{3P \setminus 3P'})}|g|
    \\&
   \hspace{1cm} +\sum_{P' \in \mc{F}_{j+1}:P' \subseteq P}\int_{P'}\absb{T_b^m(f\chi_{3P'})}|g|,
  \end{align*}
  it thus suffices to show that
  \begin{align}
  \label{eq:toproveend}
  \begin{aligned}
     &\int_{P\setminus F_{j+1}} \absb{T_b^m(f\chi_{3P})}|g| +\sum_{P' \in \mc{F}_{j+1} : P' \subseteq P} \int_{P'} \absb{T_b^m(f\chi_{3P \setminus 3P'})}|g|\\
    &\hspace{1cm}\le C\sum_{k=0}^m\big\langle |b-\langle b\rangle_{3P}|^{m-k}|f|\big\rangle_{r,3P}
\big\langle |b-\langle b\rangle_{3P}|^{k}|g|\big\rangle_{s',3P}|P|.
  \end{aligned}
  \end{align}

   We first consider the first term on the left-hand side of \eqref{eq:toproveend}. Since
 $T_b^mf=T_{b-c}^mf$ for any $c\in \C$, we have by definition of $\Omega_k(P)$,
\begin{align}
&\int_{P\setminus F_{j+1}} \absb{T_b^m(f\chi_{3P})}|g|\notag\\
&\hspace{1cm}\le \sum_{k=0}^m{m\choose k}\int_{P\setminus F_{j+1}}|T\big((b-\langle b\rangle_{3P})^{m-k}f\big)||b-\langle b\rangle_{3P}|^{k}|g|\label{ft}\\
&\hspace{1cm} \le C_1\sum_{k=0}^m\big\langle|b-\langle b\rangle_{3P}|^{m-k}|f|\big\rangle_{r,3P}\big\langle |b-\langle b\rangle_{3P}|^{k}|g|\big\rangle_{1,3P}|P|,\notag
\end{align}
where $C_1:=2^m3^n\f_{T,r}(\tfrac{1}{(m+1)6^{n+2}})$.

Now consider the second term in \eqref{eq:toproveend}. Fix $P' \in \mc{F}_{j+1}$ such that $P'\subseteq P$ and denote
$$\psi_k(x):=T\big((b-\langle b\rangle_{3P})^{m-k}f\chi_{3P\setminus 3P'}\big)(x), \qquad x \in \R^n.$$
Then, for $y\in P'$ to be specified later  we have
\begin{equation}
\begin{aligned}
\int_{P'}&\absb{T_b^m(f\chi_{3P \setminus 3P'})} \abs{g}\le 2^m\sum_{k=0}^m\int_{P'}|\psi_k||b-\langle b\rangle_{3P}|^{k}|g|\\
&\le 2^m\sum_{k=0}^m\int_{P'}|\psi_k(x)-\psi_k(y)||b(x)-\langle b\rangle_{3P}|^{k}|g(x)|\dd x\label{yterm}\\
&\hspace{1cm}+2^m\sum_{k=0}^m|\psi_k(y)|\int_{P'}|b(x)-\langle b\rangle_{3P}|^{k}|g(x)|\dd x.
\end{aligned}
\end{equation}
Denote
$$\xi_k(x):=\hab{b(x)-\langle b\rangle_{3P}}^{m-k}f(x)\chi_{3P'}(x),\qquad x \in \R^n,$$
and consider the sets
$$\widetilde\Omega_k(P'):=\{x\in P':|T(\xi_k)(x)|>\f_{T,r}(\tfrac{1}{4(m+1)3^n})\langle|\xi_k|\rangle_{r,3P'}\}.$$
Set $\widetilde\Omega(P'):=\cup_{k=0}^m\widetilde\Omega_k(P')$, for which we have $|\widetilde\Omega(P')|\le \frac{1}{4}|P'|$.
Now, define the good part of the cube $P'$ as
$$G_{P'}:=P'\setminus \hab{\O(P)\cup\widetilde \Omega(P')}.$$
Then, by (\ref{cz}), we have
$$
|G_{P'}|\ge \big(\tfrac{1}{2}-\tfrac{1}{4}\big)|P'|=\tfrac{1}{4}|P'|,
$$
and for all $y\in G_{P'}$ we have
\begin{align*}
|\psi_k(y)|&\le \absb{T\big((b-\langle b\rangle_{3P})^{m-k}f\chi_{3P}\big)(y)}+\absb{T\big((b-\langle b\rangle_{3P})^{m-k}f\chi_{3P'}\big)(y)}\\
&\le \f_{T,r}(\tfrac{1}{(m+1)6^{n+2}})\langle|b-\langle b\rangle_{3P}|^{m-k}|f|\rangle_{r,3P}\\
&\hspace{1cm}+\f_{T,r}(\tfrac{1}{4(m+1)3^n})\langle|b-\langle b\rangle_{3P}|^{m-k}|f|\rangle_{r,3P'}.
\end{align*}
Further, by the definition of $M_k(P)$,
$$\langle|b-\langle b\rangle_{3P}|^{m-k}|f|\rangle_{r,3P'}\le c_{n,m}\langle|b-\langle b\rangle_{3P}|^{m-k}|f|\rangle_{r,3P}.$$
Hence, for all $y\in G_{P'}$, we have
$$|\psi_k(y)|\le 2c_{n,m}\,\f_{T,r}(\tfrac{1}{(m+1)6^{n+2}})\langle|b-\langle b\rangle_{3P}|^{m-k}|f|\rangle_{r,3P}.$$
From this, integrating (\ref{yterm}) over $y\in G_{P'}$, using H\"older's inequality and the definition of the set ${\mathcal M}(P)$,
we obtain
\begin{align*}
&\int_{P'}\absb{T_b^m(f\chi_{3P \setminus 3P'})}|g|\\
&\le 4\cdot 2^m \sum_{k=0}^m\osc_s\big(T(\eta_{m-k}\chi_{{\mathbb R}^n\setminus 3P'});P'\big)\big\langle |b-\langle b\rangle_{3P}|^{k}|g|\big\rangle_{s',P'}|P'|\\
&\hspace{0.1cm}+2c_{n,m}\,\f_{T,r}(\tfrac{1}{(m+1)6^{n+2}})\sum_{k=0}^m\langle|b-\langle b\rangle_{3P}|^{m-k}|f|\rangle_{r,3P}\big\langle |b-\langle b\rangle_{3P}|^{k}|g|\big\rangle_{1,P'}|P'|\\
&\le C_2\sum_{k=0}^m\big\langle |b-\langle b\rangle_{3P}|^{m-k}|f|\big\rangle_{r,3P}\big\langle |b-\langle b\rangle_{3P}|^{k}|g|\big\rangle_{s',P'}|P'|,
\end{align*}
where
$$C_2:=\tilde{c}_{n,m}\big(\f_{T,r}(\tfrac{1}{(m+1)6^{n+2}})+\f_{\mc{M}^{\#}_{T,s},r}(\tfrac{1}{(m+1)6^{n+2}})\big).$$

By H\"older's inequality, for any $q \in [1,\infty)$,
$$\sum_{P' \in \mc{F}_{j+1} : P' \subseteq P}\langle|h|\rangle_{q,P'}|P'|\le \langle|h|\rangle_{q,P}|P|.$$
Therefore,
\begin{align*}
&\sum_{P' \in \mc{F}_{j+1} : P' \subseteq P}\int_{P'}\absb{T_b^m(f\chi_{3P \setminus 3P'})}|g|\\
&\hspace{1cm}\le C_2\sum_{k=0}^m\big\langle |b-\langle b\rangle_{3P}|^{m-k}|f|\big\rangle_{r,3P}\big\langle |b-\langle b\rangle_{3P}|^{k}|g|\big\rangle_{s',3P}|P|,
\end{align*}
which, along with (\ref{ft}), proves (\ref{eq:toproveend}). This completes the proof.
\end{proof}

\begin{remark}\label{dyadversion}
Under the assumptions of Theorem \ref{sdp} and by the ``three lattice theorem" (see, e.g., \cite{LN19}),
there exist $3^n$ dyadic lattices ${\mathscr D}_j$ so that for any $f,g\in L^{\infty}_c({\mathbb R}^n)$ and $b\in L^1_{\text{loc}}({\mathbb R}^n)$, there
exist sparse families ${\mathcal S}_j\subset {\mathscr D}_j$ such that
\begin{align*}
\int_{{\mathbb R}^n}|T_b^mf||g|&\le C\sum_{j=1}^{3^n}\Bigl(\sum_{Q\in {{\mathcal S}_j}}\big\langle |b-\langle b\rangle_Q|^{m}|f|\big\rangle_{r,Q}\big\langle|g|\big\rangle_{s',Q}|Q|\\
&\hspace{1cm}+\sum_{Q\in {{\mathcal S}_j}}\big\langle |f|\big\rangle_{r,Q}\big\langle |b-\langle b\rangle_Q|^{m}|g|\big\rangle_{s',Q}|Q|\Bigr),
\end{align*}
where $C$ is given by (\ref{constant}).
\end{remark}

\section{Weighted estimates for  fractional sparse forms}\label{sec:fracsparse}
In the next section, we will prove quantitative Bloom weighted estimates for the sparse forms in the conclusion of Theorem \ref{sdp}. As a preparation, we establish a weighted estimate for fractional sparse forms in this section.

\begin{theorem}\label{sparsefrac}
Let $1\le r<p\leq q<s\le \infty$, set $\a = \frac{r}p-\frac{r}q$ and let $w \in A_{q/r} \cap {\rm RH}_{(s/q)'}$.
For any sparse family of cubes ${\mathcal S}\subset {\mathscr D}$, $f \in L^p(w^{p/q})$ and $g \in L^{q'}(w^{1-q'})$ we have
$$
\sum_{Q\in {\mathcal S}}\langle|f|\rangle_{\frac{r}{1+\a},Q}\langle|g|\rangle_{s',Q}|Q|^{1+\frac{\a}{r}}\lesssim [w]_{A_{q/r}}^{\beta}[w]_{{\rm{RH}}_{(s/q)'}}^{\beta}\|f\|_{L^p(w^{p/q})}\|g\|_{L^{q'}(w^{1-q'})}
$$
with
$$\beta={\max\Bigl( \tfrac{s(1-\tfrac{\alpha}{r})-1}{s-q},\tfrac{1}{q-r}\Bigr)}.$$
\end{theorem}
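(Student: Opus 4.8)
The estimate is a two-weight (one-weight in disguise) bound for a fractional sparse form, and the natural strategy is to reduce it to the known result of Li \cite{Li17} and Fackler--Hyt\"onen \cite{FH18} for the non-fractional case $\alpha = 0$, i.e.\ $p = q$, after a change of measure. First I would normalize the problem: by duality it suffices to estimate the bilinear form pairing $f \in L^p(w^{p/q})$ against $g \in L^{q'}(w^{1-q'})$, and I would like to rewrite
$$
\sum_{Q\in {\mathcal S}}\langle|f|\rangle_{\frac{r}{1+\a},Q}\langle|g|\rangle_{s',Q}|Q|^{1+\frac{\a}{r}}
$$
in a form to which an $A_p$--$\mathrm{RH}$ sparse bound applies. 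The key observation is that the exponent $|Q|^{1+\alpha/r}$ together with the lowered averaging exponent $\frac{r}{1+\alpha}$ on $f$ encodes precisely a fractional integration of order governed by $\alpha$; absorbing the extra power of $|Q|$ into a redefinition of the weight (replacing $w$ by $w^{p/q}$ on the $f$-side and keeping track of the reverse-H\"older tail from $s$) should turn the fractional sparse form into a genuine sparse form to which one can apply the $A$-class/RH-class sparse estimate.

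The main technical steps, in order, would be:
\emph{(1)} Reduce to the dyadic lattice ${\mathscr D}$ containing ${\mathcal S}$ and use sparseness to replace averages over $Q$ by integrals over the disjoint sets $E_Q$, so that the left-hand side is comparable to $\int_{\R^n} (\text{something}) \cdot M_{\mathcal S}$-type expression; alternatively keep the sum and prepare for a direct application of the abstract sparse form bound.
\emph{(2)} Perform the change of measure: set $\sigma := w^{1-q'}$ (or an appropriate power) so that $[\sigma]_{A_{q'/s'}} \sim [w]_{A_{q/r}}^{\text{power}}$ and similarly track how $\mathrm{RH}_{(s/q)'}$ of $w$ transfers; here I expect to use \eqref{dualityAp} and the self-improvement Proposition \ref{rhweights} only if sharp constants in the exponents require it (they might, to land on the stated $\beta$).
\emph{(3)} Apply the (non-fractional) two-weight sparse testing result of \cite{Li17, FH18} in the limited range dictated by $r$ and $s$; this gives a bound with an $A_{q/r}\cap \mathrm{RH}_{(s/q)'}$ constant raised to $\max\bigl(\frac{1}{q-r}, (\text{limited-range RH exponent})\bigr)$.
\emph{(4)} Bookkeep the exponents: the fractional parameter $\alpha$ enters only through the shift $r \mapsto r/(1+\alpha)$ in the $f$-average and the compensating $|Q|^{\alpha/r}$, which after the change of measure modifies the ``effective $p$'' and hence the exponent on the $A$-constant from the plain $\frac{1}{q-r}$ to $\frac{s(1-\alpha/r)-1}{s-q}$ in the range where the reverse-H\"older term dominates; taking the max of the two regimes yields the stated $\beta$.

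\textbf{Main obstacle.}
The genuinely delicate point is \emph{(4)}: getting the exact exponent $\beta=\max\bigl(\tfrac{s(1-\alpha/r)-1}{s-q},\tfrac{1}{q-r}\bigr)$ rather than a crude one. The fractional shift interacts with the reverse-H\"older exponent $(s/q)'$ and with the $A_{q/r}$ exponent in a way that is easy to get slightly wrong, and one has to be careful about which of the two competing powers is larger in which sub-range of $(p,q,r,s)$ — in particular the boundary cases $s=\infty$ (where $\mathrm{RH}_{(s/q)'} = \mathrm{RH}_\infty$ trivializes to $A_\infty$-type control) and $p=q$ (where $\alpha=0$ and $\beta$ must reduce to the Li / Fackler--Hyt\"onen value) serve as consistency checks. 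I would handle this by first doing the endpoint $s=\infty$ case by hand to fix the $\frac{1}{q-r}$ branch, then running the general argument and verifying that the $\mathrm{RH}$-contribution produces exactly $\frac{s(1-\alpha/r)-1}{s-q}$, using the precise form of the sparse bound in \cite{Li17,FH18} together with the duality relation \eqref{dualityAp}.
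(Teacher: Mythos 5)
Your plan rests on a reduction to the non-fractional case $p=q$ via a ``change of measure'' that absorbs $|Q|^{\alpha/r}$ into a redefined weight. This is a genuine gap: a power of $|Q|$ is not a weight average and cannot be traded for one without an additional invocation of the $A_p$ condition, which would cost extra powers of the weight constant and would not land on the stated $\beta$. More to the point, such a reduction is unnecessary. The testing characterization quoted as Theorem~\ref{twoweight} already allows an arbitrary multiplier sequence $\lambda_Q\ge0$, so it applies directly to the fractional form with $\lambda_Q=|Q|^{1+\alpha/r}$, once slightly extended to allow the effective averaging exponent $\frac{r}{1+\alpha}$ to dip below $1$. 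There is no preliminary step that eliminates $\alpha$.

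The actual content of the paper's proof --- which your proposal never reaches --- is the verification of the two testing conditions. This uses two tools you do not mention: the Cascante--Ortega--Verbitsky norm equivalence (Lemma~\ref{cov}), which rewrites $\|T_R(u)\|_{L^q(v)}$ and $\|T_R(v)\|_{L^{p'}(u)}$ as explicit sparse sums of products of weighted averages, and the Fackler--Hyt\"onen Carleson-type embedding (Lemma~\ref{fh}), applied after peeling off a power of $[w^{s/(s-q)}]_{A_{\frac{s}{s-q}\frac{q-r}{r}+1}}$ via a free parameter $\delta$ chosen so that the remaining exponents satisfy the lemma's hypotheses. The fractional parameter $\alpha$ enters precisely through the admissible range of $\delta$ and the resulting powers, and this is how the branch $\frac{s(1-\alpha/r)-1}{s-q}$ of $\beta$ is produced. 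A change of measure collapsing $\alpha$ to zero cannot simultaneously reproduce both branches $\frac{s(1-\alpha/r)-1}{s-q}$ and $\frac{1}{q-r}$ from a single effective pair of exponents, which is a further indication that the proposed reduction cannot give the sharp constant.
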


For $\a=0$, this theorem was proved by Bernicot--Frey--Petermichl {\cite[Proposition~6.4]{BFP16}}. The general case $\a\ge 0$ is a combination of generalizations by Li \cite{Li17} and Fackler--Hyt\"onen \cite{FH18}.

\begin{remark}\label{rem:symmweight}
  Following the notation of Nieraeth \cite{Ni19}, for $0<r<p<s\leq \infty$ and a weight $w$, define
  $$
  [w]_{p,(r,s)} := \sup_{Q \in \mc{Q}}\, \ip{w^{-1}}_{\frac{pr}{p-r},Q} \ip{w}_{\frac{sp}{s-p},Q}.
  $$
 Upon inspection of the proof, it is clear that the conclusion of Theorem~\ref{sparsefrac} can be more symmetrically phrased as
 $$
\sum_{Q\in {\mathcal S}}\langle|f|\rangle_{\frac{r}{1+\a},Q}\langle|g|\rangle_{s',Q}|Q|^{1+\frac{\a}{r}}\lesssim [w]_{{q,(r,s)}}^{\beta q}\|f\|_{L^p(w^{p})}\|g\|_{L^{q'}(w^{-q'})}
$$
for all weights $w$ such that $[w]_{{q,(r,s)}}<\infty$.
\end{remark}

As a direct corollary of Theorem \ref{sparsefrac}, in the case $r=1$ and $s= \infty$, we recover \cite[Lemma 3.2]{HLS23}, which is a special case of \cite[Theorem 1.1]{FH18}:

\begin{cor}\label{corollary:weightedsparse}
Let $1<p \leq q <\infty, $ set $\a = \frac1p-\frac1q$ and let $w \in A_{q}$.
For any sparse family of cubes ${\mathcal S}\subset {\mathscr D}$ and $f \in L^p(w^{p/q})$ we have
\begin{align*}
  \nrms{\sum_{Q\in {\mathcal S}}\langle|f|\rangle_{\frac{1}{1+\a},Q}|Q|^{{\a}} \chi_Q}_{L^q(w)} \lesssim [w]_{A_q}^{\max\ha{1-\alpha,\frac{1}{q-1}}} \nrm{f}_{L^p(w^{p/q} )} .
\end{align*}
In particular, we recover the  well-known bound
\begin{equation}\label{spbound}
 \nrms{\sum_{Q\in {\mathcal S}}\langle|f|\rangle_{1,Q} \chi_Q}_{L^q(w)}\lesssim [w]_{A_q}^{\max(1,\frac{1}{q-1})}\nrm{f}_{L^q(w )}.
\end{equation}
\end{cor}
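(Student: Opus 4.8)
The statement to prove is Corollary \ref{corollary:weightedsparse}, which should be an essentially immediate consequence of Theorem \ref{sparsefrac} once one translates between the "bilinear form" formulation and the "$L^q$-norm of a sparse operator" formulation by duality. So the proof is a short deduction, and the only thing that requires care is bookkeeping of exponents.

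First I would set $r=1$, $s=\infty$ in Theorem \ref{sparsefrac}. With these choices $\a = \frac1p-\frac1q$ (matching the corollary), the hypothesis becomes $w\in A_q\cap\mathrm{RH}_{(\infty)'}=A_q\cap\mathrm{RH}_1=A_q$ (since $(s/q)'=(\infty)'=1$ and $\mathrm{RH}_1$ holds for every weight, with constant $1$), and $s'=1$. The exponent $\b$ specializes: the first argument of the $\max$ is $\frac{s(1-\a/r)-1}{s-q}$, which as $s\to\infty$ tends to $1-\a/r = 1-\a$ (here $r=1$); the second argument is $\frac1{q-r}=\frac1{q-1}$. Hence $\b=\max(1-\a,\frac1{q-1})$, which is exactly the exponent in the corollary. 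Thus Theorem \ref{sparsefrac} gives, for all $f\in L^p(w^{p/q})$ and $g\in L^{q'}(w^{1-q'})$,
\[
\sum_{Q\in{\mathcal S}}\langle|f|\rangle_{\frac1{1+\a},Q}\langle|g|\rangle_{1,Q}|Q|^{1+\a}\lesssim [w]_{A_q}^{\max(1-\a,\frac1{q-1})}\|f\|_{L^p(w^{p/q})}\|g\|_{L^{q'}(w^{1-q'})}.
\]
(One should double-check $|Q|^{1+\a/r}=|Q|^{1+\a}$; yes, since $r=1$.)

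Next I would rewrite the left-hand side. Since $\langle|g|\rangle_{1,Q}|Q|=\int_Q|g|$, the sum equals $\int_{\R^n}\big(\sum_{Q\in{\mathcal S}}\langle|f|\rangle_{\frac1{1+\a},Q}|Q|^{\a}\chi_Q\big)|g|$. Denote $h:=\sum_{Q\in{\mathcal S}}\langle|f|\rangle_{\frac1{1+\a},Q}|Q|^{\a}\chi_Q$. Then the estimate above reads
\[
\int_{\R^n} h\,|g| \lesssim [w]_{A_q}^{\max(1-\a,\frac1{q-1})}\|f\|_{L^p(w^{p/q})}\|g\|_{L^{q'}(w^{1-q'})}
\]
for all such $g$. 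Now I invoke $L^q$–$L^{q'}$ duality with respect to the weight $w$: writing $\|g\|_{L^{q'}(w^{1-q'})}=\|gw^{-1/q}\|_{L^{q'}(w)}$ and noting $\int h|g| = \int (hw^{1/q})(gw^{-1/q})$, taking the supremum over $g$ with $\|gw^{-1/q}\|_{L^{q'}(w)}\le 1$ — which, since $w$ is a genuine weight so $w^{1/q}$ is finite a.e. and the pairing is nondegenerate, computes $\|hw^{1/q}\|_{L^q(w)}$... wait, more simply: $\|h\|_{L^q(w)} = \sup\{\int h\phi : \|\phi\|_{L^{q'}(w^{1-q'})}\le 1\}$ by the standard duality $L^q(w)^* = L^{q'}(w^{1-q'})$. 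Taking that supremum of both sides yields precisely
\[
\nrms{\sum_{Q\in{\mathcal S}}\langle|f|\rangle_{\frac1{1+\a},Q}|Q|^{\a}\chi_Q}_{L^q(w)} \lesssim [w]_{A_q}^{\max(1-\a,\frac1{q-1})}\|f\|_{L^p(w^{p/q})},
\]
which is the first claim of the corollary.

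Finally, for the "in particular" statement, I take $p=q$, so $\a=0$; then $w^{p/q}=w$, $\langle|f|\rangle_{\frac1{1+\a},Q}=\langle|f|\rangle_{1,Q}$, $|Q|^{\a}=1$, and $\max(1-\a,\frac1{q-1})=\max(1,\frac1{q-1})$, giving \eqref{spbound}. The only "obstacle" is genuinely just verifying that the limiting values of the $A_{q/r}\cap\mathrm{RH}_{(s/q)'}$ hypothesis and of the exponent $\b$ as $r\to 1$, $s\to\infty$ come out as claimed, plus checking that no constants blow up in these limits (they do not: $\mathrm{RH}_1$-constant is $1$, and $\b$ is continuous in $(r,s)$ near $(1,\infty)$). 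No approximation or density argument beyond the standard one already implicit in Theorem \ref{sparsefrac} is needed.
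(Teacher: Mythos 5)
Your proof is correct and follows exactly the route the paper intends: Corollary~\ref{corollary:weightedsparse} is simply Theorem~\ref{sparsefrac} specialized to $r=1$, $s=\infty$ (so that $(s/q)'=1$ and the $\mathrm{RH}_1$ hypothesis is vacuous, and $\beta$ becomes $\max(1-\alpha,\frac{1}{q-1})$), combined with the duality $L^q(w)^*=L^{q'}(w^{1-q'})$ under the unweighted pairing to convert the bilinear estimate into the stated $L^q(w)$-norm bound. The hedging about continuity in $(r,s)$ near $(1,\infty)$ is unnecessary, since Theorem~\ref{sparsefrac} already allows $r=1$ and $s=\infty$ with the usual interpretation of the exponent, but this does not affect the argument.
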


The proof of Theorem \ref{sparsefrac} is based on three main ingredients, the first of which is
a very slight generalization of a result of Li {\cite{Li17}}.

\begin{theorem}[{\cite[Theorem 1.2]{Li17}}]\label{twoweight}
Let $1<p\le q<s\leq\infty$ and $r \in (0,p)$. Let $w$ and $\si$ be weights and $\lambda_Q\geq 0$ for any $Q \in \ms{D}$. Let $\mc{S}\subset \ms{D}$ be a sparse family of cubes and suppose that ${\mathcal N}$ is the best constant such that
$$\sum_{Q\in {\mc{S}}}\langle |f|\rangle_{r,Q}\langle |g|\rangle_{s',Q}\la_Q\le {\mathcal N}\|f\|_{L^p(w)}\|g\|_{L^{q'}(\si)}.$$
Denote $u:=w^{\frac{r}{r-p}}$, $v:=\si^{\frac{s'}{s'-q'}}$, set
$$\tau_Q:=\langle u\rangle_Q^{\frac{1}{r}-1}\langle v\rangle_Q^{-\frac{1}{s}}\frac{\la_Q}{|Q|},\qquad Q \in \ms{D}$$
and for $R \in \ms{D}$ define
$$T_R f:=\sum_{Q\in {\mc{S}}:Q\subseteq R}\tau_Q\langle f\rangle_Q\chi_Q.$$
Then
\begin{equation}\label{testing}
{\mathcal N}\eqsim \sup_{R\in {\mc{S}}}\frac{\|T_R(u)\|_{L^q(v)}}{u(R)^{1/p}}+\sup_{R\in {\mc{S}}}\frac{\|T_R(v)\|_{L^{p'}(u)}}{v(R)^{1/q'}}.
\end{equation}
\end{theorem}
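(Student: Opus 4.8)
The statement is a two‑weight Sawyer‑type testing theorem for a positive dyadic operator, obtained from the bilinear sparse form after a change of variables; when $s=\infty$ it is exactly \cite[Theorem~1.2]{Li17}, and for general $s<\infty$ (and general $r\in(0,p)$) the plan is to run Li's argument, with the role of ordinary averages on the $g$–side replaced by $L^{s'}$–averages throughout. I would organize the proof in three steps. \emph{Step~1 (change of variables).} Write $|f|^r=\phi u$ and $|g|^{s'}=\psi v$. By the very definitions of $u$ and $v$ one has the algebraic identities $u^{p/r}w=u$ and $v^{q'/s'}\sigma=v$, whence $\nrm{f}_{L^p(w)}=\nrm{\phi}_{L^{p/r}(u)}^{1/r}$ and $\nrm{g}_{L^{q'}(\sigma)}=\nrm{\psi}_{L^{q'/s'}(v)}^{1/s'}$. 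Substituting further $\phi=\Phi^r$, $\psi=\Psi^{s'}$ and collecting the weight averages into the coefficients exactly as in the definition of $\tau_Q$, the inequality defining $\mc{N}$ takes the equivalent form
$$
\sum_{Q\in\mc{S}}\ip{\Phi}_{r,u,Q}\,\ip{\Psi}_{s',v,Q}\,\tau_Q\,\frac{u(Q)v(Q)}{\abs{Q}}\;\le\;\mc{N}\,\nrm{\Phi}_{L^p(u)}\nrm{\Psi}_{L^{q'}(v)},
$$
where $\ip{h}_{\rho,\omega,Q}:=\bigl(\tfrac{1}{\omega(Q)}\int_Q\abs{h}^{\rho}\,\omega\bigr)^{1/\rho}$ is the $L^{\rho}(\omega)$–average over $Q$. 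Thus $\mc{N}$ equals the norm of this sparse form relative to the weighted (fractional) averages $\ip{\cdot}_{r,u,\cdot}$, $\ip{\cdot}_{s',v,\cdot}$ and the new weights $u$, $v$, and it is in these variables that the problem becomes standard.

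\emph{Step~2 (reduction to testing; easy direction).} Dualizing the $\Psi$–factor — legitimate up to a bounded factor because $s'<q'$ (equivalently $q<s$), so that the weighted maximal operator $M^{(s')}_v$ is bounded on $L^{q'}(v)$ — the bound of Step~1 is equivalent to the $L^p(u)\to L^q(v)$ boundedness of the positive, $1$–homogeneous dyadic operator $\Phi\mapsto\sum_{Q\in\mc{S}}\ip{\Phi}_{r,u,Q}\,\tau_Q\ip{u}_Q\,\chi_Q$; it fails to be linear only because of the $L^r(u)$–averages, but those are tamed by the boundedness of $M^{(r)}_u$ on $L^{p/r}(u)$, valid since $r<p$. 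Because $1<p\le q<\infty$, such an operator is bounded if and only if the Sawyer testing conditions hold, i.e.\ its action on $\chi_R$, and dually, has the expected norm; and tracing the definitions ($\ip{\chi_R}_{r,u,Q}=1$ for $Q\subseteq R$, and $\tau_Q\ip{u}_Q=\tau_Q u(Q)/\abs{Q}$) these testing conditions are, up to harmless tail terms over cubes $Q\supsetneq R$, exactly the two suprema on the right‑hand side of \eqref{testing}. Since a testing constant never exceeds the operator norm, the inequality $\mc{N}\gtrsim(\text{RHS of }\eqref{testing})$ follows at once.

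\emph{Step~3 (the testing theorem).} It remains to show that the testing conditions are sufficient, which I would do by the parallel corona (stopping‑time) argument of \cite{Li17}: construct a stopping family $\mc{A}$ for $\Phi$ adapted to $\ip{\cdot}_{r,u,\cdot}$ and a stopping family $\mc{B}$ for $\Psi$ adapted to $\ip{\cdot}_{s',v,\cdot}$, split $\mc{S}$ according to the minimal cubes of $\mc{A}$ and of $\mc{B}$ containing a given $Q$, and on each such piece replace the two fractional averages by their comparable values on the governing stopping cubes. This bounds the contribution of each corona by $\sum_Q\tau_Q u(Q)v(Q)/\abs{Q}$ over that corona, which the testing constant controls when evaluated at the join of the two relevant stopping cubes; summing over all pairs of stopping cubes is a H\"older argument that closes precisely because $p\le q$, fed by the two Carleson embeddings $\sum_{A\in\mc{A}}\ip{\Phi}_{r,u,A}^{p}\,u(A)\lesssim\nrm{\Phi}_{L^p(u)}^{p}$ and $\sum_{B\in\mc{B}}\ip{\Psi}_{s',v,B}^{q'}\,v(B)\lesssim\nrm{\Psi}_{L^{q'}(v)}^{q'}$, which have room to spare exactly because $r<p$ and $s'<q'$ (they reduce to boundedness of $M^{(r)}_u$ on $L^{p/r}(u)$ and of $M^{(s')}_v$ on $L^{q'/s'}(v)$). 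The main obstacle is the bookkeeping of this parallel corona — organizing the double sum over $\mc{A}\times\mc{B}$, separating the off‑diagonal cases $A\subsetneq B$ and $B\subsetneq A$, and checking uniformity in the degenerate endpoints $s=\infty$ (so $s'=1$) and $r$ small; once these are handled as in \cite{Li17}, the remaining estimates are routine.
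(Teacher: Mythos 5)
Your proposal takes a genuinely different and substantially more laborious route than the paper. The paper's proof is a two‑sentence observation: for $r\geq 1$ the theorem (including general $s\le\infty$) \emph{is} Li's Theorem~1.2 verbatim, and the only new case is $r<1$, for which one merely adapts the proof of Li's equivalence (2.1)\,$\Leftrightarrow$\,(2.2) by swapping the roles of H\"older's inequality and the weighted maximal operator argument (since for $r<1$ the $L^r(u)$‑average is bounded \emph{above} rather than below by the $L^1(u)$‑average, so the directions in which each tool is needed flip). You, by contrast, read Li's Theorem~1.2 as only covering $s=\infty$, and therefore propose to re‑run the full change‑of‑variables plus parallel‑corona machinery to handle general $s$. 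That premise is off: Li already allows $1\leq r<p\leq q<s\leq\infty$ with $s<\infty$, and the ``very slight generalization'' in this paper is exclusively to admit $r\in(0,1)$.

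As for the mathematics: your Step~1 (change of variables $|f|^r=\phi u$, $|g|^{s'}=\psi v$, the identities $u^{p/r}w=u$, $v^{q'/s'}\sigma=v$, and the identification $\tau_Q u(Q)v(Q)/|Q|=\langle u\rangle_Q^{1/r}\langle v\rangle_Q^{1/s'}\lambda_Q$) checks out for all $r>0$, and your Step~3 Carleson embeddings do have room to spare for $r<p$, $s'<q'$ — so the outline could in principle be pushed through. But you explicitly defer the parallel‑corona bookkeeping to \cite{Li17}, which makes your argument circular in spirit: you are re‑deriving Li's theorem while simultaneously citing its proof for the hard part. The paper's route is cleaner because it cites the theorem itself rather than its proof, and isolates precisely the one lemma where $r<1$ changes anything. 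A smaller issue: in your Step~2 you keep the $L^r(u)$‑averages $\langle\Phi\rangle_{r,u,Q}$ throughout rather than reducing to $L^1(u)$‑averages as Li's (2.1)\,$\Leftrightarrow$\,(2.2) does; this is a legitimate variant, but it sidesteps, rather than addresses, the exact step the paper flags as needing modification for $r<1$, so you never actually isolate where the new case requires a new idea.
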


\begin{proof}
  In the case $r \geq 1$, the theorem is exactly  \cite[Theorem 1.2]{Li17}. The case $r<1$ is proven analogously. Indeed, only the proof of the equivalence  \cite[(2.1)]{Li17} $\Leftrightarrow$ \cite[(2.2)]{Li17} needs to be adapted. To handle the average of $f$ in the implication $\Leftarrow$, one replaces the maximal operator argument by H\"older's inequality. Conversely, for the implication $\Rightarrow$, one replaces H\"older's inequality by a maximal operator argument, using the boundedness of $ M_{1,u}^{\mc{S}}$ on $L^p(u)$.
\end{proof}

In order to estimate the two terms in \eqref{testing}, we will use the following norm equivalence from Cascante--Ortega--Verbitsky \cite{COV04}.

\begin{lemma}[\cite{COV04}]\label{cov}
Let $p \in [1,\infty)$, let $w$ be a weight and let $\lambda_Q \geq 0$ for all $Q\in \ms{D}$. Then  $$
\Big\|\sum_{Q\in {\mathscr D}}\la_Q\chi_Q\Big\|_{L^p(w)}\eqsim \has{\sum_{Q\in {\mathscr D}}\la_Q\Big(\frac{1}{w(Q)}\sum_{Q'\in {\mathscr D}, Q'\subseteq Q}\la_{Q'}w(Q')\Big)^{p-1}w(Q)}^{1/p}.
$$
\end{lemma}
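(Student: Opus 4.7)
The plan is to prove this equivalence, which is a dyadic version of the classical Wolff inequality from non-linear potential theory, in three steps, with the main obstacle postponed to Step 3. \emph{Step 1 (pointwise Abel identity):} For each $x$, enumerate the dyadic cubes containing $x$ from smallest to largest as $Q_0(x) \subsetneq Q_1(x) \subsetneq \cdots$, set $a_i := \la_{Q_i(x)}$ and $F_i := \sum_{j \le i} a_j$. The elementary identity $F_\infty^p = \sum_i (F_i^p - F_{i-1}^p) = \sum_i \int_{F_{i-1}}^{F_i} p\,t^{p-1}\,dt$, combined with the estimate $\int_{F_{i-1}}^{F_i} t^{p-1}\,dt \eqsim_p a_i F_i^{p-1}$ (valid for $p \ge 1$ by monotonicity of $t\mapsto t^{p-1}$, after treating the cases $a_i \le F_{i-1}$ and $a_i > F_{i-1}$ separately) gives the pointwise equivalence
$$
f(x)^p \eqsim_p \sum_{Q \ni x} \la_Q\, g_Q(x)^{p-1}, \qquad g_Q(x) := \sum_{Q' \subseteq Q,\,Q' \ni x} \la_{Q'}.
$$
Integrating against $w$ and exchanging the order of summation yields
$$
\|f\|_{L^p(w)}^p \eqsim \sum_{Q \in \ms{D}} \la_Q \int_Q g_Q^{p-1}\, w.
$$

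\emph{Step 2 (Jensen for one direction):} Setting $S(Q) := \sum_{Q' \subseteq Q} \la_{Q'} w(Q')$, we have $\int_Q g_Q\, w = S(Q)$, so $S(Q)/w(Q)$ is the $w$-average of $g_Q$ over $Q$. Jensen's inequality applied to the probability measure $w\,dx/w(Q)$ on $Q$ and the map $t \mapsto t^{p-1}$ (convex for $p \ge 2$, concave for $1 \le p \le 2$) gives
$$
\int_Q g_Q^{p-1}\,w \;\gtreqless\; w(Q)\,(S(Q)/w(Q))^{p-1} \qquad\text{when } p \gtreqless 2.
$$
Combined with Step 1 this already yields one direction of the claimed equivalence in each range: $\|f\|_{L^p(w)}^p \gtrsim$ Wolff sum when $p \ge 2$, and $\|f\|_{L^p(w)}^p \lesssim$ Wolff sum when $1 \le p \le 2$.

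\emph{Step 3 (Remaining direction via duality and Carleson embedding):} For the opposite direction, Jensen has the wrong sign pointwise in $Q$ and we argue via duality. Writing $\|f\|_{L^p(w)} = \sup_{\|h\|_{L^{p'}(w)} \le 1} \sum_Q \la_Q w(Q) \ip{h}_Q^w$ with $\ip{h}_Q^w := \frac{1}{w(Q)}\int_Q h\,w$, we use the factorization
$$
\la_Q w(Q) = \bracb{\la_Q w(Q) (S(Q)/w(Q))^{p-1}}^{1/p} \cdot \bracb{\la_Q w(Q)^2/S(Q)}^{1/p'}
$$
(verified directly using the algebraic identity $(p-1)p'/p = 1$). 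H\"older's inequality with exponents $(p, p')$ then yields
$$
\sum_Q \la_Q w(Q) \ip{h}_Q^w \le \has{\sum_Q \la_Q w(Q)(S(Q)/w(Q))^{p-1}}^{1/p} \has{\sum_Q c_Q (\ip{h}_Q^w)^{p'}}^{1/p'}
$$
with $c_Q := \la_Q w(Q)^2/S(Q)$, and the second factor is controlled by a constant times $\|h\|_{L^{p'}(w)}$ via the weighted dyadic Carleson embedding theorem, provided $\{c_Q\}$ satisfies the Carleson packing condition $\sum_{Q \subseteq R} c_Q \lesssim w(R)$ uniformly in $R$. For the range $1 \le p \le 2$, where this gives the wrong direction, one instead tests the dual formulation against an explicit potential $h$ built from $S$ and applies the same Carleson-type packing after a stopping-time decomposition on the level sets of $S(\cdot)/w(\cdot)$.

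\emph{Main obstacle.} The crux of the proof is the Carleson packing condition $\sum_{Q \subseteq R} c_Q \lesssim w(R)$. Although the pointwise chain sum $\sum_{Q \ni x,\, Q \subseteq R} \mu_Q/S(Q)$ (with $\mu_Q := \la_Q w(Q)$) can grow logarithmically in $S(R)/\mu_{Q_0(x)}$, integrating against $w$ via Fubini and grouping cubes along dyadic levels where $S$ doubles telescopes the sum into a geometric series summing to $O(w(R))$. This telescoping mechanism is the technical heart of the Cascante--Ortega--Verbitsky argument, which we would follow in detail.
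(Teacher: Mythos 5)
The paper does not prove this lemma; it cites it from~\cite{COV04}. Steps 1 and 2 of your proposal are correct and standard. However, Step 3 contains a genuine gap: the Carleson packing condition
$$
\sum_{Q\subseteq R} c_Q \lesssim w(R), \qquad c_Q := \frac{\la_Q w(Q)^2}{S(Q)}, \quad S(Q)=\sum_{Q'\subseteq Q}\la_{Q'}w(Q'),
$$
is \emph{false} in general, and the ``telescoping'' mechanism you invoke does not rescue it. Concretely, work in dimension $1$ with $w\equiv 1$ and $R=[0,1)$, and put $\la_Q=1$ for every dyadic $Q\subseteq R$ with $\ell(Q)\ge 2^{-N}$, and $\la_Q=0$ otherwise. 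For a cube $Q\subseteq R$ of side $2^{-k}$ ($0\le k\le N$) one computes $w(Q)=2^{-k}$, $S(Q)=\sum_{j=k}^{N}2^{j-k}\cdot 2^{-j}=2^{-k}(N-k+1)$, hence $c_Q = 2^{-k}/(N-k+1)$, and therefore
$$
\sum_{Q\subseteq R}c_Q=\sum_{k=0}^N \underbrace{2^k}_{\#\text{ cubes}}\cdot\frac{2^{-k}}{N-k+1}=\sum_{j=1}^{N+1}\frac1j \approx \log N,
$$
which is not $O(w(R))=O(1)$. Consequently the Carleson embedding you want (equivalently, testing $h=\chi_R$) fails by a logarithm. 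This is not a defect of the lemma itself --- in this example $f\equiv N+1$, so both sides of the asserted equivalence are $\approx (N+1)^p$, as they should be --- but it shows that your factorization $\la_Q w(Q)=A_Q^{1/p}B_Q^{1/p'}$ followed by H\"older and Carleson embedding is the wrong decomposition: it introduces exactly the logarithmic loss that your ``grouping by dyadic levels of $S$'' argument claims to remove, but cannot. Indeed, the pointwise chain sum $\sum_{Q\ni x,\,Q\subseteq R}\mu_Q/S(Q)$ is genuinely comparable to $\log\bigl(S(R)/S(Q_0(x))\bigr)$ (each doubling block of $S$ contributes $\asymp 1$), and in the example above this equals $\approx\log N$ for \emph{every} $x\in R$, so integrating against $w$ does not help.

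To close the gap one has to prove the remaining direction (for $p\ge2$: $\|f\|_{L^p(w)}^p\lesssim\sum_Q\la_Q w(Q)(S(Q)/w(Q))^{p-1}$, and the reverse for $1<p<2$) by a method that does not rely on this false packing condition. The argument in~\cite{COV04} proceeds differently --- roughly, by iterating the pointwise ``Abel/integration'' identity of your Step~1 and exploiting the structure of $g_Q$ directly, rather than passing through duality and Carleson embedding. (One can sanity-check the case $p=2$ by expanding the square, which gives $\|f\|_{L^2(w)}^2 = 2\sum_Q\la_Q S(Q)-\sum_Q\la_Q^2 w(Q)\eqsim\sum_Q\la_Q S(Q)$ exactly, with no Carleson condition needed.) As written, Step~3 does not constitute a proof.
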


The final ingredient in the proof of Theorem \ref{sparsefrac} is the following result from Fackler--Hyt\"onen \cite{FH18}.

\begin{lemma}[{\cite[Lemma 4.2]{FH18}}]\label{fh}
Let $w$ and $\si$ be weights and $\a,\b,\ga\ge 0$ with $\a>0$ and $\a+\b+\g\ge 1$. Then we have for any sparse family ${\mathcal S}\subset {\mathscr D}$ and  $R \in \ms{D}$
$$\sum_{Q\in {\mathcal S}:Q\subseteq R}|Q|^{\a}\si(Q)^{\b}w(Q)^{\ga}\lesssim |R|^{\a}\si(R)^{\b}w(R)^{\ga}.$$
\end{lemma}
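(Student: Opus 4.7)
The plan is to reduce the problem to the critical case $\alpha+\beta+\gamma=1$, then pass from the sum to an integral of a product of maximal functions, and close the estimate with Kolmogorov's inequality.

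\emph{Step 1 (reduction to $\alpha+\beta+\gamma=1$).} Since $\alpha>0$ and $\alpha+\beta+\gamma\geq 1$, one can pick $\alpha_1\in(0,\alpha]$, $\beta_1\in[0,\beta]$, $\gamma_1\in[0,\gamma]$ with $\alpha_1+\beta_1+\gamma_1=1$: if $\alpha\geq 1$ take $\alpha_1=1$, $\beta_1=\gamma_1=0$; otherwise take $\alpha_1=\alpha$ and split $1-\alpha\leq\beta+\gamma$ between $\beta_1\leq\beta$ and $\gamma_1\leq\gamma$. Writing $\alpha_2,\beta_2,\gamma_2\geq 0$ for the complementary parts and using $Q\subseteq R$, the trivial bound $|Q|^{\alpha_2}\si(Q)^{\beta_2}w(Q)^{\gamma_2}\leq|R|^{\alpha_2}\si(R)^{\beta_2}w(R)^{\gamma_2}$ factors out, reducing the claim to the exponents $(\alpha_1,\beta_1,\gamma_1)$.

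\emph{Step 2 (critical case).} Assume $\alpha+\beta+\gamma=1$, and note that then $\beta+\gamma<1$ since $\alpha>0$. We use the identity $|Q|^\alpha\si(Q)^\beta w(Q)^\gamma=|Q|\langle\si\rangle_Q^\beta\langle w\rangle_Q^\gamma$ together with sparseness $|Q|\leq\eta^{-1}|E_Q|$ and the pointwise bounds $\langle\si\rangle_Q\leq M(\si\chi_R)(x)$ and $\langle w\rangle_Q\leq M(w\chi_R)(x)$ for $x\in E_Q\subseteq Q$. The pairwise disjointness of $\{E_Q\}$ inside $R$ yields
$$\sum_{Q\in\mc{S},\, Q\subseteq R}|Q|^\alpha\si(Q)^\beta w(Q)^\gamma\leq \eta^{-1}\int_R M(\si\chi_R)^\beta M(w\chi_R)^\gamma.$$
Hölder's inequality with exponents $(\beta+\gamma)/\beta$ and $(\beta+\gamma)/\gamma$, followed by Kolmogorov's inequality (applicable precisely because $\beta+\gamma<1$) combined with the weak $(1,1)$ bound $\|M(\si\chi_R)\|_{L^{1,\infty}}\lesssim\si(R)$, gives
$$\int_R M(\si\chi_R)^{\beta+\gamma}\lesssim |R|^{1-(\beta+\gamma)}\si(R)^{\beta+\gamma}=|R|^\alpha\si(R)^{\beta+\gamma},$$
and analogously for $w$. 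Combining the two estimates delivers the required bound $\lesssim|R|^\alpha\si(R)^\beta w(R)^\gamma$. The degenerate subcases in which $\beta=0$ or $\gamma=0$ are handled by applying Kolmogorov directly to a single maximal function, while $\beta=\gamma=0$ (forcing $\alpha=1$) collapses to the standard sparse inequality $\sum|Q|\lesssim|R|$.

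\emph{Expected obstacle.} The only subtle ingredient is Step 1: one must use \emph{both} $\alpha>0$ (to keep $\alpha_1>0$, so that the lemma in the critical case still applies) and $\alpha+\beta+\gamma\geq 1$ (to ensure $\beta_1+\gamma_1=1-\alpha_1$ can be realized inside $[0,\beta]\times[0,\gamma]$). After this bookkeeping the analytic core — sparseness plus Kolmogorov's inequality — proceeds mechanically, since the sharp threshold $\beta+\gamma<1$ for Kolmogorov's inequality is exactly what the hypothesis $\alpha>0$ guarantees.
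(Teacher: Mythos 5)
The paper does not reproduce the proof of this lemma; it simply cites \cite{FH18}, so there is no in-paper argument to compare against literally. Your proof is correct. The reduction in Step 1 is valid: the surplus exponents $\alpha_2,\beta_2,\gamma_2$ are non-negative and each of $|Q|$, $\sigma(Q)$, $w(Q)$ is non-decreasing in $Q\subseteq R$, so the factor $|Q|^{\alpha_2}\sigma(Q)^{\beta_2}w(Q)^{\gamma_2}\leq|R|^{\alpha_2}\sigma(R)^{\beta_2}w(R)^{\gamma_2}$ pulls out of the sum. In the critical case $\alpha+\beta+\gamma=1$, the identity $|Q|^\alpha\sigma(Q)^\beta w(Q)^\gamma=|Q|\langle\sigma\rangle_Q^\beta\langle w\rangle_Q^\gamma$, together with $|Q|\leq\eta^{-1}|E_Q|$, the pointwise bound $\langle\sigma\rangle_Q\leq M(\sigma\chi_R)(x)$ for $x\in E_Q$ (and likewise for $w$), and disjointness of the $E_Q$'s in $R$, correctly gives $\sum_{Q\subseteq R}|Q|^\alpha\sigma(Q)^\beta w(Q)^\gamma\leq\eta^{-1}\int_R M(\sigma\chi_R)^\beta M(w\chi_R)^\gamma$. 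H\"older with conjugate exponents $(\beta+\gamma)/\beta$ and $(\beta+\gamma)/\gamma$, followed by Kolmogorov's inequality (valid since $\beta+\gamma=1-\alpha<1$, which is exactly where $\alpha>0$ is used) and the weak $(1,1)$ bound for $M$, delivers $|R|^\alpha\sigma(R)^\beta w(R)^\gamma$, and the degenerate cases are dispatched correctly. This is the standard proof and matches the argument in \cite{FH18} in spirit (reduce to the critical case, then exploit sparseness and the weak $(1,1)$ estimate for $M$).
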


Combining these three ingredients with the proof strategy from \cite{FH18}, we can now prove Theorem \ref{sparsefrac}.

\begin{proof}[Proof of Theorem \ref{sparsefrac}] A direct computation shows that, in the notation of Theorem~\ref{twoweight}, we have $u=w^{-\frac{r}{q-r}}$, $v=w^{\frac{s}{s-q}}$ and
$$\tau_Q=\langle w^{-\frac{r}{q-r}}\rangle_Q^{\frac{1+\a}{r}-1}\langle w^{\frac{s}{s-q}}\rangle_Q^{-\frac{1}{s}}|Q|^{\frac{\a}{r}}.$$

Let us first consider first the testing condition in (\ref{testing}). We will show that
\begin{equation}\label{firsttest}
\|T_R(u)\|_{L^q(v)}\lesssim [w^{\frac{s}{s-q}}]_{A_{\frac{s}{s-q}\frac{q-r}{r}+1}}^{\frac{1}{q}-\frac{1}{s}}[w^{-\frac{r}{q-r}}]_{A_{\infty}}^{\frac{1}{q}}u(R)^{1/p}.
\end{equation}
By Lemma \ref{cov}, we have
$$
\|T_R(u)\|_{L^q(v)}^q\eqsim \sum_{Q\in {\mathcal S}:Q\subseteq R}
\langle w^{\frac{s}{s-q}}\rangle_Q^{2-q-\frac{1}{s}}\langle w^{-\frac{r}{q-r}}\rangle_Q^{\frac{1+\a}{r}}|Q|^{2-q+\frac{\a}{r}}\Psi(Q)^{q-1},$$
where
\begin{equation}\label{PsiQ}
  \Psi(Q):=
\sum_{Q'\in {\mathcal S}:Q'\subseteq Q}
\langle w^{\frac{s}{s-q}}\rangle_{Q'}^{1-\frac{1}{s}}\langle w^{-\frac{r}{q-r}}\rangle_{Q'}^{\frac{1+\a}{r}}|Q'|^{1+\frac{\a}{r}}.
\end{equation}
For $\d>0$ we have
\begin{align*}
&\Psi(Q)\le [w^{\frac{s}{s-q}}]_{A_{\frac{s}{s-q}\frac{q-r}{r}+1}}^{\d}\\
&\hspace{1.5cm}\cdot\sum_{Q'\in {\mathcal S}:Q'\subseteq Q}\langle w^{\frac{s}{s-q}}\rangle_{Q'}^{1-\frac{1}{s}-\d}\langle w^{-\frac{r}{q-r}}\rangle_{Q'}^{\frac{1+\a}{r}-\d\frac{s}{s-q}\frac{q-r}{r}}|Q'|^{1+\frac{\a}{r}}.
\end{align*}

Our goal now is to use Lemma \ref{fh}. Its assumptions imply the following restrictions on $\d$:
\begin{eqnarray*}
1-\tfrac{1}{s}-\d\ge 0&\Leftrightarrow& \d\le 1-\tfrac{1}{s}\\
\tfrac{1+\a}{r}-\d\tfrac{s}{s-q}\tfrac{q-r}{r}\ge 0&\Leftrightarrow&\d\le \tfrac{1+\a}{q-r}\cdot\tfrac{s-q}{s}\\
\tfrac{1}{s}-\tfrac{1}{r}+\d\bigl(1+\tfrac{s}{s-q}\tfrac{q-r}{r}\bigr)>0&\Leftrightarrow&\d> \tfrac{1}{q}-\tfrac{1}{s}.
\end{eqnarray*}
Moreover, the assumption $\alpha+\beta+\gamma \geq 1$  of Lemma \ref{fh} holds trivially because $\alpha+\beta+\gamma=1+\frac{\a}{r}$.
We conclude that $\d \in \big(\frac{1}{q}-\frac{1}{s},1-\frac{1}{s}\big]$ and the set of $\d$ satisfying all restrictions will be non-empty if
$$\tfrac{1}{q}-\tfrac{1}{s}<\tfrac{1+\a}{q-r}\cdot\tfrac{s-q}{s}.$$
It is easily seen that this estimate is true for all $r<q<s$ and $\a\ge 0$.

Taking $\d>0$ satisfying the above restrictions and applying Lemma~\ref{fh}, we obtain
$$
\Psi(Q)\lesssim [w^{\frac{s}{s-q}}]_{A_{\frac{s}{s-q}\frac{q-r}{r}+1}}^{\d}
\langle w^{\frac{s}{s-q}}\rangle_{Q}^{1-\frac{1}{s}-\d}\langle w^{-\frac{r}{q-r}}\rangle_{Q}^{\frac{1+\a}{r}-\d\frac{s}{s-q}\frac{q-r}{r}}|Q|^{1+\frac{\a}{r}}.
$$
Therefore,
\begin{align*}
\|T_R(u)\|_{L^q(v)}^q&\lesssim [w^{\frac{s}{s-q}}]_{A_{\frac{s}{s-q}\frac{q-r}{r}+1}}^{\d(q-1)}
\sum_{Q\in {\mathcal S}:Q\subseteq R} \langle w^{\frac{s}{s-q}}\rangle_{Q}^{\frac{s-q}{s}-\d(q-1)} \\ &\hspace{2cm}\cdot\langle w^{-\frac{r}{q-r}}\rangle_Q^{q\frac{1+\a}{r}+(q-1)\d\frac{s}{s-q}\frac{q-r}{r}}|Q|^{1+\frac{q\a}{r}}\\
&\leq [w^{\frac{s}{s-q}}]_{A_{\frac{s}{s-q}\frac{q-r}{r}+1}}^{\frac{s-q}{s}}\sum_{Q\in {\mathcal S}:Q\subseteq R}
\langle w^{-\frac{r}{q-r}}\rangle_Q^{1+\frac{q\a}{r}}|Q|^{1+\frac{q\a}{r}}.
\end{align*}
Further,
\begin{align*}
\sum_{Q\in {\mathcal S}:Q\subseteq R} \langle w^{-\frac{r}{q-r}}\rangle_Q^{1+\frac{q\a}{r}}|Q|^{1+\frac{q\a}{r}}
&\leq \Big(\int_R w^{-\frac{r}{q-r}}\Big)^{\frac{q\a}{r}}\sum_{Q\in {\mathcal S}:Q\subseteq R}\int_Qw^{-\frac{r}{q-r}}\\
&\lesssim [w^{-\frac{r}{q-r}}]_{A_{\infty}}\Big(\int_Rw^{-\frac{r}{q-r}}\Big)^{1+\frac{q\a}{r}},
\end{align*}
which, along with the previous estimate, proves (\ref{firsttest}).

\medskip

Now consider the second testing condition in (\ref{testing}). Let us show that
\begin{equation}\label{secondtest}
\|T_R(v)\|_{L^{p'}(v)}\lesssim  [w^{\frac{s}{s-q}}]_{A_{\frac{s}{s-q}\frac{q-r}{r}+1}}^{\frac{1}{q} -\frac{1}{s}}[w^{\frac{s}{s-q}}]_{A_{\infty}}^{\frac{1}{p'}}v(R)^{1/q'}.
\end{equation}
Again by Lemma \ref{cov}, we have
$$\|T_R(v)\|_{L^{p'}(u)}^{p'}\eqsim \sum_{Q\in {\mathcal S}:Q\subseteq R}
\langle w^{\frac{s}{s-q}}\rangle_Q^{1-\frac{1}{s}}\langle w^{-\frac{r}{q-r}}\rangle_Q^{\frac{1+\a}{r}+1-p'}|Q|^{2-p'+\frac{\a}{r}}\Psi(Q)^{p'-1},$$
where $\Psi(Q)$ is as in \eqref{PsiQ}. By the above estimate for $\Psi(Q)$,
\begin{align*}
\|T_R(v)\|_{L^{p'}(u)}^{p'}&\lesssim [w^{\frac{s}{s-q}}]_{A_{\frac{s}{s-q}\frac{q-r}{r}+1}}^{\d(p'-1)}
\sum_{Q\in {\mathcal S}:Q\subseteq R} \langle w^{\frac{s}{s-q}}\rangle_{Q}^{p'(1-\frac{1}{s})-\d(p'-1)}\\
&\hspace{2cm}\cdot\langle w^{-\frac{r}{q-r}}\rangle_Q^{p'\frac{1+\a}{r}-(p'-1)(1+\d\frac{s}{s-q}\frac{q-r}{r})}|Q|^{1+\frac{p'\a}{r}}\\
&\lesssim [w^{\frac{s}{s-q}}]_{A_{\frac{s}{s-q}\frac{q-r}{r}+1}}^{p'(\frac{1}{q}-\frac{1}{s})}\sum_{Q\in {\mathcal S}:Q\subseteq R}
\langle w^{\frac{s}{s-q}}\rangle_{Q}^{1+\frac{p'\a}{r}}|Q|^{1+\frac{p'\a}{r}}\\
&\lesssim [w^{\frac{s}{s-q}}]_{A_{\frac{s}{s-q}\frac{q-r}{r}+1}}^{p'(\frac{1}{q}-\frac{1}{s})}[w^{\frac{s}{s-q}}]_{A_{\infty}}\Big(\int_Rw^{\frac{s}{s-q}}\Big)^{1+\frac{p'\a}{r}},
\end{align*}
from which (\ref{secondtest}) follows.

Combining the two estimates for the testing conditions, inequalities (\ref{firsttest}) and (\ref{secondtest}), we obtain
\begin{align*}
\mc{N}&\lesssim
[w^{\frac{s}{s-q}}]_{A_{\frac{s}{s-q}\frac{q-r}{r}+1}}^{\frac{1}{q}-\frac{1}{s}}[w^{-\frac{r}{q-r}}]_{A_{\infty}}^{\frac{1}{q}}
+[w^{\frac{s}{s-q}}]_{A_{\frac{s}{s-q}\frac{q-r}{r}+1}}^{\frac{1}{q}-\frac{1}{s}}[w^{\frac{s}{s-q}}]_{A_{\infty}}^{\frac1{p'}}\\
&\lesssim [w^{\frac{s}{s-q}}]_{A_{\frac{s}{s-q}\frac{q-r}{r}+1}}^{\frac{1}{q}-\frac{1}{s}+\frac{1}{q}\frac{s-q}{s}\frac{r}{q-r}}
+[w^{\frac{s}{s-q}}]_{A_{\frac{s}{s-q}\frac{q-r}{r}+1}}^{\frac{1}{p'}+\frac{1}{q}-\frac{1}{s}},
\end{align*}
where $\mc{N}$ is as in Theorem \ref{twoweight}.
From this, since
$$[w^t]_{A_{t(q-1)+1}}\le [w]_{A_q}^t[w]_{\rm{RH}_t}^t,$$
we obtain the conclusion with
\begin{align*}
\beta &= {\tfrac{s}{s-q}\max(\tfrac{1}{p'}+\tfrac{1}q-\tfrac{1}{s},\tfrac{1}{q}-\tfrac{1}{s}+\tfrac{1}{q}\tfrac{s-q}{s}\tfrac{r}{q-r})}\\
&={\max\bigl( \tfrac{s(1-\tfrac{\alpha}{r})-1}{s-q}, \tfrac{1}{q-r}\bigr)}.\qedhere
\end{align*}
\end{proof}

\section{Bloom weighted bounds for sparse forms associated to commutators}\label{sec:sparseB}
In this section we consider one of the sparse forms in the conclusion of Theorem \ref{sdp}, namely, ${\mathcal B}_{{\mathcal S},b,r,s}^m(f,g)$ as defined in the introduction.

Let us start with some definitions. Given  $b \in L^1_{\loc}(\R^n)$, a weight $\nu$ and $\a\ge 0$, define the weighted, fractional $\BMO$-seminorm as
$$\|b\|_{\BMO_{\nu}^{\a}}:=\sup_{Q \in \mc{Q}}\frac{1}{\nu(Q)^{1+\frac{\a}{n}}}\int_Q|b-\langle b\rangle_Q|.$$
We omit $\alpha$ from our notation if $\alpha=0$. Furthermore, given a cube $Q \in \mc{Q}$, define the oscillation
$$
\O_\nu(b,Q) :=  \frac{1}{\nu(Q)} \int_Q \abs{b-\ip{b}_Q}
$$
and the weighted sharp maximal function
$$
M^{\#}_\nu (b):=\sup_{Q \in \mc{Q}} \,\O_\nu(b,Q) \chi_Q.
$$
Note that
$
\nrm{b}_{\BMO_\nu} = \nrm{M^{\#}_\nu (b)}_{L^\infty(\R^n)}.
$

\begin{theorem}\label{pleq}
  Let $1\leq r<p,  q<s\le\infty$, $m \in \N$ and $b \in L^1_{\loc}(\R^n)$. Assume that $\mu \in A_{p/r}$ and $\lambda \in A_{q/r}\cap {\rm{RH}}_{(s/q)'}$. Set
  $$\alpha:= -\tfrac{1}{t}:= \tfrac{1}{pm}-\tfrac1{qm},$$
   $\alpha_+ := \max\cbrace{\alpha,0}$ and define the Bloom weight
  $$\nu^{1+\a}:= \mu^{\frac{1}{pm}}\lambda^{-\frac{1}{qm}}.$$
For any sparse family ${\mathcal S}\subset \ms{D}$, $f \in L^p(\mu)$ and $g \in L^{q'}(\lambda^{1-q'})$ we have
$$
{\mathcal B}_{{\mathcal S},b,r,s}^m(f,g)\lesssim  C(\mu,\lambda) \nrm{f}_{L^p(\mu)}\|g\|_{L^{q'}(\la^{1-q'})}\begin{cases}
  \nrm{b}_{\BMO_{\nu}^{\alpha n}}^m, \quad &p\leq q,\\
  \nrm{M^{\#}_\nu (b)}_{L^t(\nu)}^m, \quad &q\leq p,
\end{cases}
$$
where
\newsavebox{\mycases}
\reqnomode
\begin{align}
  \sbox{\mycases}{$\displaystyle {\hspace{-20pt}C(\mu,\lambda)\leq}\left\{\begin{array}{@{}c@{}}\vphantom{[\mu]_{A_{p/r}}^{\frac{\beta_{\mu_1}}{1}} }\\ \vphantom{[\mu]_{A_{p/r}}^{\frac{\beta_{\mu_1}}{1}}}\end{array}\right.\kern-\nulldelimiterspace$}
  \raisebox{-.5\ht\mycases}[0pt][0pt]
  {\usebox{\mycases}}&[\mu]_{A_{p/r}}^{\beta_{\mu_1}} [\lambda]_{A_{q/r}}^{\beta_{\lambda_1}+\beta_{\lambda_2}}[\la]_{\rm{RH}_{(s/q)'}}^{\beta_{\lambda_2}} \quad&&\text{for any } p,q,m \label{estweight:1} \\
     &[\mu]_{A_{p/r}}^{\beta_{\mu_2}}[\la]_{A_{q/r}}^{\beta_{\lambda_2}} [\lambda]_{{\rm{RH}}_{(s/q)'}}^{\beta_{\lambda_2}}  &&\text{if }q\leq p,\, m\geq 2\quad \qquad \label{estweight:2}
\end{align}
\leqnomode
with
\begin{align*}
\beta_{\mu_1}&:=\max\bigl(\tfrac{1}{r},\tfrac{1}{p-r}\bigr)+\big(1-\tfrac{1}{rm}\big)(rm-\lfloor rm\rfloor)\max\big(\tfrac{1-\a_+}{r-\a p},\tfrac{1}{(1+\a)p-r}\big)\\
&\hspace{2.85cm}+\tfrac{q}{p}\sum_{j=1}^{\lfloor rm\rfloor-1}\tfrac{j}{rm}\max\big(\tfrac{1-\a_+}{r+jq\a}, \tfrac{1}{(1-j\a)q-r}\big),\\
\beta_{\mu_2} &:= \tfrac{rm}{p-r}\\
\beta_{\la_1}&:= \tfrac{p}{q}\tfrac{1}{rm}(rm-\lfloor rm\rfloor)\max\big(\tfrac{1-\a_+}{r-\a p},\tfrac{1}{(1+\a)p-r}\big)\\
&\hspace{2.85cm}+\sum_{j=1}^{\lfloor rm\rfloor-1}\big(1-\tfrac{j}{rm}\big)\max\big(\tfrac{1-\a_+}{r+jq\a}, \tfrac{1}{(1-j\a)q-r}\big),\\
\beta_{\lambda_2}&:=\max\bigl(\tfrac{s(1-\tfrac{\alpha}{r})-1}{s-q},\tfrac{1}{q-r}\bigr).
\end{align*}
\end{theorem}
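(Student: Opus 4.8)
The plan is to decouple the commutator factor $|b-\langle b\rangle_Q|^m$ from the ``fractional'' part of the form by a \emph{change-of-measure} argument and then feed the result into Theorem~\ref{sparsefrac}. I would open with the following elementary observation: writing $\nu^{1+\alpha}=\mu^{1/pm}\lambda^{-1/qm}$, one has for each cube $Q$ the telescoping/pointwise bound
$$
|b(x)-\langle b\rangle_Q| \le \Omega_\nu(b,Q)\,\nu(Q)/|Q| + (\text{error controlled by } M^{\#}_\nu(b)\,\nu(x))
$$
so that, raising to the $m$-th power and expanding, $\langle |b-\langle b\rangle_Q|^m|f|\rangle_{r,Q}$ is comparable to a sum of terms of the form $\nrm{b}_{\BMO_\nu}^{m-j}$ (or $\Omega_\nu(b,Q)^{m-j}$) times $\langle \nu^j |f|\rangle_{r,Q}\,(\nu(Q)/|Q|)^{\,?}$. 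After optimizing, the cleanest route is: in the case $p\le q$ absorb all $m$ copies into $\nrm{b}_{\BMO_\nu^{\alpha n}}^m$ using the fractional $\BMO$-seminorm, reducing $\mathcal B^m_{\mathcal S,b,r,s}(f,g)$ to $\nrm{b}_{\BMO_\nu^{\alpha n}}^m$ times a \emph{fractional} sparse form $\sum_Q \langle \nu^{\text{power}}|f|\rangle_{r,Q}\langle|g|\rangle_{s',Q}|Q|^{1+\cdots}$; in the case $q\le p$ instead pull out $\nrm{M^{\#}_\nu(b)}_{L^t(\nu)}^m$ via a Hölder argument against $\nu^j$ on each $E_Q$, again leaving a fractional sparse form. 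This is where the split in the statement — $\nrm{b}_{\BMO_\nu^{\alpha n}}^m$ versus $\nrm{M^{\#}_\nu(b)}_{L^t(\nu)}^m$ — originates, and why the exponents $\lfloor rm\rfloor$ and the $j$-sums appear: each integer ``level'' $j=1,\dots,\lfloor rm\rfloor$ of the expansion is handled by Theorem~\ref{sparsefrac} with a different fractional exponent $\alpha$ replaced by $j\alpha$ (or similar), and the fractional level $rm-\lfloor rm\rfloor$ is handled separately.

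Concretely, after the change of measure, for $p\le q$ I would apply Theorem~\ref{sparsefrac} (in the symmetric form of Remark~\ref{rem:symmweight}) to the weight $w$ built from $\mu,\lambda$ — recalling $\mu\in A_{p/r}$, $\lambda\in A_{q/r}\cap\mathrm{RH}_{(s/q)'}$ — to bound the residual fractional sparse form by $[\mu]_{A_{p/r}}^{\beta}[\lambda]_{A_{q/r}}^{\beta'}[\lambda]_{\mathrm{RH}_{(s/q)'}}^{\beta'}\nrm{f}_{L^p(\mu)}\nrm{g}_{L^{q'}(\lambda^{1-q'})}$, tracking how $\beta$ depends on the level $j$. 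Summing over $j=0,\dots,\lfloor rm\rfloor$ and the fractional correction, using the self-improvement Proposition~\ref{rhweights} only where it is genuinely needed, gives the exponents $\beta_{\mu_1},\beta_{\lambda_1},\beta_{\lambda_2}$ in \eqref{estweight:1}. For the second bound \eqref{estweight:2}, valid when $q\le p$, $m\ge 2$, I would instead follow Li's strategy from \cite{Li22}: peel off one oscillation factor at a time, at each stage using the $L^t(\nu)$ control of $M^{\#}_\nu(b)$ together with a Carleson-type packing estimate (Lemma~\ref{fh}), which yields the simpler exponent $\beta_{\mu_2}=\tfrac{rm}{p-r}$ on $\mu$ and leaves only $\beta_{\lambda_2}$ on $\lambda$; this bound is ``incomparable'' with the first precisely because it trades a worse $\mu$-exponent for a much better (indeed trivial) $\lambda$-exponent in the part that was $\beta_{\lambda_1}$.

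\textbf{Main obstacle.} The technical heart — and the step I expect to be most delicate — is the bookkeeping in the change-of-measure expansion: one must expand $|b-\langle b\rangle_Q|^m$ (with $r\ge 1$, so $rm$ need not be an integer and $\langle\cdot\rangle_{r,Q}$ mixes the powers), keep track of exactly which power of $\nu$, which power of $\nu(Q)/|Q|$, and which fractional exponent to feed into Theorem~\ref{sparsefrac} at each level $j$, and verify that at every level the hypotheses of Theorem~\ref{sparsefrac} (namely $\mu\in A_{p/r}$, the shifted reverse-Hölder/$A$ conditions for $\lambda$, and $r<p\le q<s$ with the corresponding shifted exponents) genuinely hold — this is what forces the somewhat baroque formulas for $\beta_{\mu_1}$ and $\beta_{\lambda_1}$, including the $\max$'s coming from the two cases in Theorem~\ref{sparsefrac}'s exponent $\beta$ and the $(rm-\lfloor rm\rfloor)$ fractional term. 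A secondary subtlety is justifying the passage from $\nrm{M^{\#}_\nu(b)}_{L^t(\nu)}$ (a genuinely weaker quantity than $\nrm{b}_{\BMO_\nu}$ when $q<p$, i.e. $t<\infty$) to pointwise control on each $E_Q$: here one exploits sparseness, i.e. $\sum_Q(\cdots)\,|E_Q| \le \int (\cdots)$, to convert the sum of local oscillations into a single $L^t(\nu)$-norm, and one must ensure the Hölder exponents match so that exactly an $m$-th power $\nrm{M^{\#}_\nu(b)}_{L^t(\nu)}^m$ comes out. Once these are in place, the remaining estimates are routine applications of Theorem~\ref{sparsefrac}, Lemma~\ref{cov}, Lemma~\ref{fh} and Proposition~\ref{rhweights}.
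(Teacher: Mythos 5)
Your high-level plan is in the right spirit — decompose the oscillation, absorb it into a $\BMO_\nu^{\alpha n}$ norm when $p\le q$ or an $L^t(\nu)$ norm of $M^{\#}_\nu(b)$ when $q\le p$, and push what remains through the fractional sparse estimate of Theorem~\ref{sparsefrac} — but the concrete mechanism you describe for the decoupling does not work, and the ingredients that actually make it work are absent from your plan.

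The opening ``telescoping/pointwise bound''
$|b(x)-\langle b\rangle_Q|\le \Omega_\nu(b,Q)\,\nu(Q)/|Q|+(\text{error controlled by }M^{\#}_\nu(b)\,\nu(x))$
is not a viable starting point: the first term is just $\langle|b-\langle b\rangle_Q|\rangle_Q$, a constant, and the ``error'' you write down is not controllable pointwise by $M^{\#}_\nu(b)(x)\,\nu(x)$; there is no such estimate. What replaces it in the paper is Lemma~\ref{oscsparse}, which produces an \emph{auxiliary sparse family} $\mc{S}'\supseteq\mc{S}$ with the pointwise bound $|b-\langle b\rangle_Q|\chi_Q\lesssim\sum_{P\in\mc{S}',P\subseteq Q}\langle|b-\langle b\rangle_P|\rangle_P\chi_P$. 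This is the step you have not accounted for, and it is not cosmetic: it converts the oscillation into a sum over nested sparse cubes, which is the structure the rest of the argument exploits.

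The second and larger gap is the iteration machinery. Your ``expand the $m$-th power into a $j$-sum, with term $j$ being $\|b\|_{\BMO_\nu}^{m-j}\langle\nu^j|f|\rangle_{r,Q}(\nu(Q)/|Q|)^{?}$, and feed each $j$ to Theorem~\ref{sparsefrac} with $\alpha\mapsto j\alpha$'' is not what actually happens. After Lemma~\ref{oscsparse}, one must raise $\sum_{P\subseteq Q}\frac{\nu(P)^{1+\alpha}}{|P|}\chi_P$ to the power $rm$ (not $m$, and not an integer in general) inside $\langle\cdot\rangle_{r,Q}$; to do this one iterates the Cascante--Ortega--Verbitsky inequalities: Lemma~\ref{coveq} is applied $(\lfloor rm\rfloor-1)$ times to unfold the power into a nested sum over $P_{k-1}\subseteq\cdots\subseteq P_1\subseteq Q$, and then Lemma~\ref{cov} together with Minkowski converts the residual fractional power $\gamma=rm-\lfloor rm\rfloor+1$ into a composition of sparse operators $\mc{A}_{\mc{S}',\alpha,\nu}$ (respectively $\mc{A}_{\mc{S}',\nu,b}$ in the $q\le p$ case). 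Theorem~\ref{sparsefrac} itself is invoked \emph{once}, to pair off the $\langle|g|\rangle_{s',Q}$ factor; the $j$-sum in $\beta_{\mu_1},\beta_{\lambda_1}$ comes instead from $(k-1)$ successive applications of Corollary~\ref{corollary:weightedsparse} with weights $w_j$ interpolating between $\mu$ and $\lambda$, which are then estimated by $[\mu]_{A_{p/r}}$ and $[\lambda]_{A_{q/r}}$ via H\"older. Without Lemmas~\ref{cov} and~\ref{coveq} in your toolkit you have no way to carry out the ``expansion'' you invoke, so this is a genuine missing idea, not a detail.

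Two smaller corrections. First, in your description of the proof of \eqref{estweight:2} you cite Lemma~\ref{fh} (the Carleson-packing lemma from Fackler--Hyt\"onen, which is used inside the proof of Theorem~\ref{sparsefrac}); the lemma actually used there is the Fefferman--Stein-type Lemma~\ref{lem:FS}, combined with Buckley's bound for $M_{(1+\delta)r}$ and the self-improvement Proposition~\ref{rhweights} to choose $\delta$. Second, Proposition~\ref{rhweights} is \emph{not} used anywhere in the proof of \eqref{estweight:1}; it appears only for \eqref{estweight:2}, so your remark about applying it ``only where genuinely needed'' in the first bound is misplaced.
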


\begin{remark}\label{expl}
Observe that the sense of (\ref{estweight:2}) is that in the case $q\le p$ and  $m\ge 2$ it provides an additional bound
for $C(\mu,\lambda)$, which is incomparable with (\ref{estweight:1}), in general. See Section \ref{sec:application} for a further discussion of this phenomenon.
\end{remark}

Before turning to the proof, let us discuss some particular cases of Theorem \ref{pleq}.

\begin{remark}\label{partcases}
Suppose that $s=\infty$ and thus $[\la]_{\rm{RH}_{(s/q)'}}=1$.
In the diagonal case $p=q$, we have $\a=0$. So, in this case,
\begin{align*}
  \beta_{\mu_1}&=\bigl(rm-\lfloor rm\rfloor+\tfrac{\lfloor rm\rfloor}{2rm}(1+\lfloor rm\rfloor)\bigr)\max\big(\tfrac{1}{r},\tfrac{1}{p-r}\big),\\
  \beta_{\la_1}&=\bigl(\lfloor rm\rfloor-\tfrac{\lfloor rm\rfloor}{2rm}(1+\lfloor rm\rfloor)\bigr)\max\big(\tfrac{1}{r},\tfrac{1}{p-r}\big).
\end{align*}
If we  additionally assume that $rm\in {\mathbb N}$, we get
\begin{align*}
  \beta_{\mu_1}&=\tfrac{rm+1}{2}\max\big(\tfrac{1}{r},\tfrac{1}{p-r}\big),\\
  \beta_{\la_1}&=\tfrac{rm-1}{2}\max\big(\tfrac{1}{r},\tfrac{1}{p-r}\big).
\end{align*}
In particular, for $r=1$, we have
$$\beta_{\mu_1}=\beta_{\la_1}+\beta_{\la_2} =\tfrac{m+1}{2}\max\big(1,\tfrac{1}{p-1}\big),$$
Therefore, if $r=1$, $s=\infty$ and $p=q$, we have
\begin{equation*}
  C\ha{\mu,\lambda}\leq \begin{cases}
  \hab{[\mu]_{A_{p}} [\lambda]_{A_{p}}}^{\frac{m+1}{2}\max(1,\frac{1}{p-1})},\quad &m\geq 1,\\
  [\mu]_{A_{p}}^{\frac{m}{p-1}}[\la]_{A_{p}}^{\max\ha{1,\frac{1}{p-1}}},\quad  &m\geq 2,
\end{cases}
\end{equation*}
the first of which was obtained in \cite{LOR19}. Note that the second estimate is, in general, incomparable to the first.
\end{remark}

\begin{remark}
  In the spirit of Remark \ref{rem:symmweight}, we note that the conclusion of Theorem \ref{pleq} can be replaced by
  \begin{align*}
    {\mathcal B}_{{\mathcal S},b,r,s}^m(f,g)&\lesssim \nrm{f}_{L^p(\mu^p)}\|g\|_{L^{q'}(\la^{-q'})} \\&\hspace{-1cm}\cdot \begin{cases}
      \nrm{b}_{\BMO_{\nu}^{\alpha n}}^m [\mu]_{p, (r,\infty)}^{p\beta_{\mu_1} } [\lambda]_{q,(r,\infty)}^{q\beta_{\lambda_1}}[\la]_{q,(r,s)}^{q\beta_{\lambda_2}}, &p \leq q,\, m\geq 1,\\
     \nrm{M^{\#}_\nu (b)}_{L^t(\nu)}^m [\mu]_{p, (r,\infty)}^{p\beta_{\mu_1} } [\lambda]_{q,(r,\infty)}^{q\beta_{\lambda_1}}[\la]_{q,(r,s)}^{q\beta_{\lambda_2}},\qquad &q \leq p,\, m\geq 1,\\
     \nrm{M^{\#}_\nu (b)}_{L^t(\nu)}^m [\mu]_{p, (r,\infty)}^{p\beta_{\mu_2} } [\la]_{q,(r,s)}^{q\beta_{\lambda_2}}, &q \leq p,\, m\geq 2,
    \end{cases}
  \end{align*}
  for all weights $\mu$ such that $ [\mu]_{p, (r,\infty)}<\infty$ and weights $\lambda$ such that $[\la]_{q,(r,s)}<\infty$.
\end{remark}

Several statements below will be needed to prove Theorem \ref{pleq}, starting with the following lemma from Rivera-R\'ios and the first and third authors \cite{LOR17}.

\begin{lemma}[{\cite[Lemma 5.1]{LOR17}}]\label{oscsparse}
Let $b\in L^1_{\text{loc}}({\mathbb R}^n)$ and let ${\mathcal S}\subset {\mathscr D}$ be a sparse family. There exists a sparse family ${\mathcal S}'\subset {\mathscr D}$ such that ${\mathcal S}\subseteq {\mathcal S}'$
and for every cube $Q\in {\mathcal S}'$,
$$|b-\langle b\rangle_Q|\chi_Q\lesssim \sum_{P\in {\mathcal S}':P\subseteq Q}\big\langle|b-\langle b\rangle_P|\big\rangle_{P}\chi_P.$$
\end{lemma}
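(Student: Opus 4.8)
\textbf{Proof proposal for Lemma \ref{oscsparse}.} The statement is a localization/stopping-time result: from a sparse family $\mc{S}$ one passes to a (still sparse) family $\mc{S}'$ on which the oscillation $|b-\langle b\rangle_Q|$ is dominated, on $Q$, by a sum of \emph{averaged} oscillations over subcubes of $\mc{S}'$. My plan is to build $\mc{S}'$ by iterating a Calder\'on--Zygmund--type stopping construction inside each $Q \in \mc{S}$, stopping when the average of $|b-\langle b\rangle_Q|$ (or of $b$ itself) has roughly doubled, and then to verify both the pointwise bound and the sparseness.

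First I would fix $Q \in \mc{S}$ and run the following stopping procedure relative to $Q$. Let the first generation of stopping cubes consist of the maximal dyadic $P \subsetneq Q$ (in $\mc{D}(Q)$) for which
$$
\langle |b-\langle b\rangle_Q|\rangle_P > 2^{n+1}\langle |b-\langle b\rangle_Q|\rangle_Q,
$$
or equivalently $|\langle b\rangle_P - \langle b\rangle_Q| $ is comparably large; then recurse inside each such $P$, now centering the oscillation at $\langle b\rangle_P$. Call the resulting collection $\mathrm{ch}(Q)$ and iterate to get all descendants; let $\mc{S}'$ be the union over $Q \in \mc{S}$ of $Q$ together with all its stopping descendants (together with the cubes already in $\mc{S}$, so that $\mc{S} \subseteq \mc{S}'$). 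By maximality, for any non-stopping cube $R$ between $Q$ and its stopping children one has $\langle |b-\langle b\rangle_Q|\rangle_R \le 2^{n+1}\langle |b-\langle b\rangle_Q|\rangle_Q$; combined with the Lebesgue differentiation theorem this gives, for a.e. $x \in Q$,
$$
|b(x)-\langle b\rangle_Q| \lesssim \sum_{\substack{P \in \mc{S}',\, P \subseteq Q\\ x \in P}} \osc_P(b),
$$
where the sum telescopes along the stopping chain through $x$ and $\osc_P(b):=\langle |b-\langle b\rangle_P|\rangle_P$: indeed each step from one stopping cube to its stopping parent changes $\langle b\rangle$ by at most a constant multiple of the parent's averaged oscillation, and the number of \emph{relevant} (i.e. containing $x$) cubes is handled by the geometric/telescoping structure rather than being bounded. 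Summing $\chi_P$ over stopping cubes containing $x$ then yields exactly the claimed pointwise estimate
$$
|b-\langle b\rangle_Q|\chi_Q \lesssim \sum_{P \in \mc{S}':P \subseteq Q} \langle |b-\langle b\rangle_P|\rangle_P\, \chi_P.
$$

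Second, I would check that $\mc{S}'$ is sparse. Because the stopping cubes of the first generation inside $Q$ satisfy $\langle |b-\langle b\rangle_Q|\rangle_P > 2^{n+1}\langle |b-\langle b\rangle_Q|\rangle_Q$, a standard Calder\'on--Zygmund packing argument applied to the function $|b-\langle b\rangle_Q|$ on $Q$ gives $\sum_{P \in \mathrm{ch}(Q)}|P| \le \tfrac12 |Q|$; iterating, the descendants of $Q$ of generation $k$ occupy at most $2^{-k}|Q|$, so assigning to each stopping cube $P$ the set $E_P := P \setminus \bigcup\{P' : P' \text{ a stopping child of } P\}$ produces pairwise disjoint sets with $|E_P|\ge \tfrac12|P|$. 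Since the original $\mc{S}$ was already sparse with its own disjoint sets $E_Q \subseteq Q$, and the new cubes are organized in disjoint families under each $Q$, one obtains sparseness of $\mc{S}'$ with a dimensional constant (after possibly shrinking $\eta$).

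\textbf{Main obstacle.} The genuinely delicate point is the pointwise bound: one must control $|b(x)-\langle b\rangle_Q|$ by summing averaged oscillations over \emph{all} $P \in \mc{S}'$ with $P \subseteq Q$ and $x \in P$, not just those on the stopping chain through $x$, and the constant must be independent of $b$. The resolution is that the chain estimate $|\langle b\rangle_{P_{k+1}} - \langle b\rangle_{P_k}| \lesssim \langle |b - \langle b\rangle_{P_k}|\rangle_{P_k}$ for consecutive stopping cubes, together with the fact that $\langle b\rangle_{P_k} \to b(x)$ a.e., lets one write $b(x) - \langle b\rangle_Q$ as a telescoping series whose terms are each $\lesssim \osc_{P_k}(b) = \langle|b-\langle b\rangle_{P_k}|\rangle_{P_k}$; since these $P_k$ are precisely the stopping cubes of $\mc{S}'$ containing $x$, one is done — the extra cubes of $\mc{S}'$ only enlarge the right-hand side. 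Everything else is routine bookkeeping with dyadic cubes and the Lebesgue differentiation theorem.
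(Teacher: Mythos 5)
Your overall strategy — a Calder\'on--Zygmund stopping construction, a telescoping estimate along the stopping chain through $x$, and a packing bound to control the stopping children — is precisely the kind of argument that underlies \cite[Lemma 5.1]{LOR17} (which the paper cites without reproducing a proof). The pointwise estimate is fine: if $P_{k+1}$ is a maximal stopping child of $P_k$ then $|\langle b\rangle_{P_{k+1}} - \langle b\rangle_{P_k}| \le \langle |b-\langle b\rangle_{P_k}|\rangle_{P_{k+1}} \lesssim_n \langle |b-\langle b\rangle_{P_k}|\rangle_{P_k}$ (using the parent cube), and the Lebesgue differentiation theorem handles the tail; the parenthetical claim that the two stopping criteria are ``equivalent'' is false, but only the one implication you actually use is needed, so this is harmless.

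The genuine gap is in the sparseness of $\mathcal{S}'$. You build, for each $Q\in\mathcal{S}$, a stopping family $\mathcal{F}_Q\subseteq\mathcal{D}(Q)$ that is $\tfrac12$-sparse, and then set $\mathcal{S}' = \mathcal{S}\cup\bigcup_{Q\in\mathcal{S}}\mathcal{F}_Q$. The claim that ``the new cubes are organized in disjoint families under each $Q$'' is not true: for $Q\subsetneq Q'$ both in $\mathcal{S}$, the families $\mathcal{F}_Q$ and $\mathcal{F}_{Q'}$ live in overlapping regions and are built from different stopping criteria (the first-generation stopping cubes of $Q'$ are selected relative to $\langle b\rangle_{Q'}$, not $\langle b\rangle_Q$), so they need not coincide or nest. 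In the Carleson reformulation of sparseness, to bound $\sum_{P\in\mathcal{S}',\,P\subseteq R}|P|$ one must control the contribution from all $Q\in\mathcal{S}$ with $Q\supsetneq R$: each such $Q$ can contribute stopping cubes of total measure comparable to $|R|$ inside $R$, and if $\mathcal{S}$ contains an infinite ascending chain above $R$ (which sparse families certainly allow), there is no a priori bound on how many \emph{distinct} cubes accumulate inside $R$. The ``disjoint $E_P$'' sets you construct are pairwise disjoint only within a single $\mathcal{F}_Q$; across different $Q$'s they can overlap, so the sparseness constant does not follow by the argument given. To close this gap one needs a construction that produces a \emph{single} coherent stopping forest — for instance by running the stopping recursion once down the tree, stopping both at the oscillation threshold and at cubes of $\mathcal{S}$, so that the resulting family satisfies a Carleson bound with constant controlled by the Carleson constant of $\mathcal{S}$ plus a dimensional contribution from the packing. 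This is the essential extra idea in the LOR17 proof that your write-up elides.
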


We also need the following additional result from Cascante--Ortega--Verbitsky \cite{COV04}.

\begin{lemma}[{\cite[(2.4)]{COV04}}]\label{coveq}
Let $p \in [1,\infty)$ and  $\lambda_Q \geq 0$ for all $Q \in \ms{D}$. Then we have
  $$
\Big(\sum_{Q\in {\mathscr D}}\la_Q\chi_Q\Big)^p\le p\sum_{Q\in {\mathscr D}}\la_Q\chi_Q\Big(\sum_{Q'\in {\mathscr D}:Q'\subseteq Q}\la_{Q'}\chi_{Q'}\Big)^{p-1}
$$
\end{lemma}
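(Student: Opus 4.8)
The inequality is pointwise, so the plan is to fix $x \in \R^n$ and argue at that point. The first step is a reduction to a finite chain: only cubes $Q$ with $x \in Q$ (i.e.\ $\chi_Q(x)=1$) contribute to either side, and the dyadic cubes of $\ms{D}$ containing $x$ are totally ordered by inclusion. Since all $\lambda_Q$ are nonnegative, it suffices by monotone convergence to prove the inequality with every sum restricted to a finite subcollection $\mc{F} \subseteq \ms{D}$ and then let $\mc{F}$ increase to exhaust the cubes containing $x$ with $\lambda_Q>0$: the left side then increases to the left side of the lemma, while on the right side each summand increases and new nonnegative summands are added, so it too increases to the right side of the lemma. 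Hence I may assume the cubes relevant at $x$ form a finite chain $Q_1 \supsetneq Q_2 \supsetneq \cdots \supsetneq Q_N$, all containing $x$; write $b_i := \lambda_{Q_i} \geq 0$.

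Next I would rewrite both sides in terms of $b_1,\dots,b_N$. Since $Q_j \subseteq Q_i$ precisely when $j \geq i$, the inner sum $\sum_{Q' \subseteq Q_i} \lambda_{Q'}\chi_{Q'}(x)$ equals $T_i := \sum_{j=i}^N b_j$, and $\sum_{Q \in \ms{D}} \lambda_Q \chi_Q(x) = T_1 = \sum_{i=1}^N b_i$. Thus, at $x$, the claimed inequality reads
$$
T_1^{\,p} \leq p \sum_{i=1}^N b_i\, T_i^{\,p-1}.
$$
Note that $T_1 \geq T_2 \geq \cdots \geq T_N \geq T_{N+1} := 0$ and that $T_i - T_{i+1} = b_i$ for $1 \leq i \leq N$.

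The heart of the argument is then a telescoping estimate. Writing $T_1^{\,p} = T_1^{\,p} - T_{N+1}^{\,p} = \sum_{i=1}^N \bigl(T_i^{\,p} - T_{i+1}^{\,p}\bigr)$ and using that $t \mapsto t^{p-1}$ is nondecreasing on $[0,\infty)$ because $p \geq 1$, we get for each $i$
$$
T_i^{\,p} - T_{i+1}^{\,p} = p\int_{T_{i+1}}^{T_i} t^{p-1}\dd t \leq p\, T_i^{\,p-1}\,(T_i - T_{i+1}) = p\, b_i\, T_i^{\,p-1}.
$$
Summing over $i=1,\dots,N$ yields $T_1^{\,p} \leq p\sum_{i=1}^N b_i\, T_i^{\,p-1}$, which is exactly the inequality at $x$; letting $\mc{F}$ grow completes the proof. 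There is no real obstacle here: the only points needing a word of care are the reduction to a finite chain (so the telescoping sum is legitimate) and the degenerate cases $p=1$ or $T_i = 0$, both of which are immediate since $b_i = 0$ whenever $T_i = 0$.
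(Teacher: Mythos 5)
Your argument is correct: at a fixed point $x$ the cubes of $\ms{D}$ containing $x$ form a chain, the reduction to a finite subchain by monotonicity is legitimate (each finite-chain estimate is bounded by the full right-hand side since $p-1\ge 0$, and the suprema recover the full left-hand side), and the telescoping bound $T_i^p-T_{i+1}^p\le p\,b_i T_i^{p-1}$ is exactly what is needed. The paper offers no proof of this lemma, citing only Cascante--Ortega--Verbitsky \cite[(2.4)]{COV04}; your pointwise chain/telescoping argument is the standard proof of that numerical inequality, so you have simply supplied, correctly, the elementary proof the paper leaves to the reference.
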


We are now ready to prove Theorem \ref{pleq} in the case $p \leq q$.

\begin{proof}[Proof of \eqref{estweight:1} in Theorem \ref{pleq} in the case  $p\leq q$] By Lemma \ref{oscsparse}, there exists a sparse collection of cubes $\mc{S}\subseteq \mc{S}'\subset {\mathscr D}$
such that for any $Q \in \mc{S}$,
\begin{align*}
\ipb{\abs{b-\ip{b}_Q}^mf}_{r,Q}^r &\lesssim  \frac{1}{\abs{Q}} \int_Q \has{\sum_{P \in \mc{S'}:P\subseteq Q} \frac{1}{\abs{P}}\int_P \abs{b-\ip{b}_P}\chi_P}^{rm} \abs{f}^r  \\
&\lesssim \nrm{b}_{\BMO_\nu^{\alpha n}}^{rm} \frac{1}{\abs{Q}} \int_Q \has{\sum_{P \in \mc{S}':P\subseteq Q}\frac{\nu(P)^{1+\alpha}}{\abs{P}}\chi_P }^{rm}  \abs{f}^r.
\end{align*}

Let $k:=\lfloor rm\rfloor$ and $\gamma:=rm-(k-1) \in [1,2)$. Applying subsequently Lemma \ref{coveq} $(k-1)$ times yields
\begin{align*}
&\int_Q \has{\sum_{P \in \mc{S}':P\subseteq Q}\frac{\nu(P)^{1+\alpha}}{\abs{P}}\chi_P }^{rm}  \abs{f}^r\\
&\lesssim
\sum_{P_{k-1} \subseteq \cdots\subseteq P_1 \subseteq Q} \frac{\nu(P_1)^{1+\alpha}}{\abs{P_1}} \cdots \frac{\nu(P_{k-1})^{1+\alpha}}{\abs{P_{k-1}}}
\int_{P_{k-1}} \has{\sum_{P_k \subseteq P_{k-1}}\frac{\nu(P_k)^{1+\alpha}}{\abs{P_k}}\chi_{P_k} }^{\gamma}\abs{f}^r,
\end{align*}
where we omitted the assumption $P_1,\ldots,P_k \in \mc{S}'$ from our notation for brevity.

For $\delta \geq 0$ we denote
  \begin{equation*}
   \mc{A}_{\mc{S}',\delta}(\varphi) := \sum_{Q\in {\mathcal S}'}\langle|\varphi|\rangle_{\frac{1}{1+\delta},Q}|Q|^{{\delta}} \chi_Q,
  \end{equation*}
in which we omit $\delta$ if $\delta = 0$.
Using  Lemma \ref{cov} with the weight $w = \abs{f}^r$ and Minkowski's inequality, we have
\begin{align*}
&\int_{P_{k-1}}\has{\sum_{P_k \subseteq P_{k-1}}\frac{\nu(P_k)^{1+\alpha}}{\abs{P_k}}\chi_{P_k} }^{\gamma}\abs{f}^r\\
&\eqsim
\sum_{P_k\subseteq P_{k-1}}\frac{\nu({P_k})^{1+\alpha}}{\abs{{P_k}}} \has{\int_{P_k} \abs{f}^r}^{2-\gamma} \Big(\sum_{P\subseteq P_k}\frac{\nu(P)^{1+\alpha}}{\abs{P}}\int_{P}\abs{f}^r\Big)^{\gamma-1}\\
&\leq \sum_{P_k\subseteq P_{k-1}}\frac{\nu({P_k})^{1+\alpha}}{\abs{{P_k}}} \has{\int_{P_k} \abs{f}^r}^{2-\gamma} \Big(\int_{P_k}{\mathcal A}_{{\mathcal S}'}(|f|^r)^{\frac{1}{1+\alpha}}\nu\Big)^{(1+\alpha)(\gamma-1)}\\
&=  \sum_{{P_k}\subseteq P_{k-1}}{\nu({P_k})^{1+\alpha}} \ip{\abs{f}^r}_{1,P_k}^{2-\gamma} \cdot \ipb{\mc{A}_{\mc{S}'}(\abs{f}^r)\nu}_{\frac{1}{1+\alpha},P_k}^{\gamma-1}\abs{P_k}^{\alpha(\gamma-1)}\\
&\leq \has{\int_{P_{k-1}} \has{\mc{A}_{\mc{S}'}(\abs{f}^r)^{\gamma-2} \cdot \mc{A}_{\mc{S}',\alpha}\hab{{{\mathcal A}}_{\mc{S}'}(\abs{f}^r) \nu^{1+\alpha} }^{\gamma-1} \cdot \nu^{1+\alpha}}^{\frac{1}{1+\alpha}} }^{1+\alpha}\\
&=: \has{\int_{P_{k-1}} h^{\frac{1}{1+\alpha}} }^{1+\alpha}.
\end{align*}
Let us further write
$$\mc{A}_{\mc{S}',\alpha,\nu}(\varphi):=\mc{A}_{\mc{S}',\alpha}(\varphi)\nu^{1+\alpha},$$
and let $\mc{A}_{\mc{S}',\alpha,\nu}^j$ be the $j$-th iteration of $\mc{A}_{\mc{S}',\alpha,\nu}$. Using Minkowski's inequality $(k-1)$ times more, we find
\begin{align*}
  \sum_{P_{k-1} \subseteq \cdots\subseteq P_1 \subseteq Q} \frac{\nu(P_1)^{1+\alpha}}{\abs{P_1}} \cdots \frac{\nu(P_{k-1})^{1+\alpha}}{\abs{P_{k-1}}} & \has{\int_{P_{k-1}} h^{\frac{1}{1+\alpha}} }^{1+\alpha} \\ &\leq \has{\int_{Q} \hab{\mc{A}^{k-1}_{\mc{S}',\alpha,\nu}(h)}^{\frac{1}{1+\alpha}}}^{1+\alpha},
\end{align*}
which, along with the previous estimates, implies
$$
\ipb{\abs{b-\ip{b}_Q}^mf}_{r,Q}^r\lesssim \nrm{b}_{\BMO_\nu^{\alpha n}}^{rm} \ipb{\mc{A}^{k-1}_{\mc{S}',\alpha,\nu}(h)}_{\frac{1}{1+\alpha},Q}\abs{Q}^\alpha.
$$
From this, we conclude
\begin{align*}
{\mathcal B}_{\mc{S},b,r,s}^m(f,g)&\lesssim \nrm{b}_{\BMO_\nu^{\alpha n}}^{m}\sum_{Q\in {\mathcal S}}\langle (\mc{A}^{k-1}_{\mc{S}',\alpha,\nu}(h))^{\frac1r}\rangle_{\frac{r}{1+\a},Q}\langle|g|\rangle_{s',Q}|Q|^{1+\frac{\a}{r}}.
\end{align*}

Now, for $j=1,\ldots,k-1$, define
\begin{align*}
  \tfrac{1}{u_j} &:=\tfrac{1}q+j\tfrac{\alpha}{r} = \tfrac{j}{rm}\tfrac{1}{p}+(1-\tfrac{j}{rm})\tfrac{1}{q},\\
  w_j&:= \mu^{\frac{j}{rm}\frac{u_j}{p}}\lambda^{(1-\frac{j}{rm})\frac{u_j}{q}}.
\end{align*}
By Theorem \ref{sparsefrac}, we have
\begin{align*}
{\mathcal B}_{\mc{S},b,r,s}^m(f,g)&\lesssim C(\lambda)  \,\nrm{b}_{\BMO_\nu^{\alpha n}}^m \nrm{(\mc{A}^{k-1}_{\mc{S}',\alpha,\nu}(h))^{\frac1r}}_{L^{u_1}(\lambda^{u_1/q})}\|g\|_{L^{q'}(\la^{1-q'})},
\end{align*}
where
$$C(\lambda):=\big([\la]_{A_{q/r}}[\la]_{{\rm{RH}}_{(s/q)'}}\big)^{\max\bigl( \frac{s(1-\frac{\alpha}{r})-1}{s-q},\frac{1}{q-r}\bigr)}.$$
Next, we apply Corollary \ref{corollary:weightedsparse} $(k-1)$ times to obtain
\begin{align*}
\nrm{ \mc{A}^{k-1}_{\mc{S}',\alpha,\nu}(h)^{\frac{1}{r}}}_{L^{u_1}(\lambda^{u_1/q})}^r
&=\|{\mathcal A}_{{\mathcal S}',\alpha}({\mathcal A}_{{\mathcal S}',\alpha,\nu}^{k-2}(h))\|_{L^{u_1/r}(w_1)}\\
&\lesssim [w_1 ]_{A_{u_1/r}}^{\max\ha{1-\alpha, \frac{r}{u_1-r}}} \nrm{\mc{A}^{k-2}_{\mc{S}',\alpha,\nu}(h)}_{L^{u_2/r}(w_1)}\\&\lesssim \cdots\\
&\lesssim \has{\prod_{j=1}^{k-1} [w_j]_{A_{u_j/r}}^{\max\ha{1-\alpha, \frac{r}{u_j-r}}}} \nrm{h}_{L^{u_k/r}(w_{k-1})}.
\end{align*}

Now note that, for $\frac{1}{v} := \frac{1}{p} -\frac{\alpha}{r}$, we have
  $$
  \tfrac{1}{u_k} = \tfrac{1}{q} + (rm-(\gamma-1))\tfrac\alpha{r} = (2-\gamma)\tfrac{1}{p} +(\gamma-1)\tfrac{1}{v}.
  $$
  So, defining
  $$
  w_v:=\mu^{(1-\frac{1}{rm})\frac{v}{p}} \lambda^{\frac{1}{rm}\frac{v}{q}},
  $$
  we have by H\"older's inequality,
  \begin{align*}
&\nrm{h}_{L^{u_k/r}(w_{k-1})}\\
  &=\nrmb{ {\mathcal A}_{\mc{S}'}(\abs{f}^r)^{2-\gamma}   {{\mathcal A}}_{\mc{S}',\alpha}\hab{{{\mathcal A}}_{\mc{S}'}(\abs{f}^r) \nu^{1+\alpha}}^{\gamma-1} \nu^{1+\alpha}}_{L^{u_k/r}( \mu^{(1-\frac{\gamma}{rm})\frac{u_{k}}{p}}\lambda^{\frac{\gamma}{rm}\frac{u_{k}}{q}})}\\
  &=\nrmb{ {\mathcal A}_{\mc{S}'}(\abs{f}^r)^{2-\gamma}   {{\mathcal A}}_{\mc{S}',\alpha}\hab{{{\mathcal A}}_{\mc{S}'}(\abs{f}^r) \nu^{1+\alpha}}^{\gamma-1} \lambda^{\frac{\gamma-1}{qm}} \mu^{\frac{r}{p}-\frac{\gamma-1}{pm}}}_{L^{u_k/r}}\\
  &\leq  \nrmb{{\mathcal A}_{\mc{S}'}(\abs{f}^r)}_{L^{p/r}(\mu)}^{2-\gamma} \nrmb{{{\mathcal A}}_{\mc{S}',\alpha}\hab{{{\mathcal A}}_{\mc{S}'}(\abs{f}^r) \nu^{1+\alpha}}}_{L^{v/r}(w_v)}^{\gamma-1}.
\end{align*}
For the first term on the right-hand side, by (\ref{spbound}), we have
\begin{align*}
\nrmb{{\mathcal A}_{\mc{S}'}(\abs{f}^r) }_{L^{p/r}(\mu)}^{2-\gamma}&\lesssim [\mu]_{A_{p/r}}^{(2-\gamma)\max\ha{1,\frac{r}{p-r}}} \nrm{f}_{L^p(\mu)}^{(2-\gamma)r}.
\end{align*}
For the second term, by applying Corollary \ref{corollary:weightedsparse} and (\ref{spbound}), we have
\begin{align*}
  \nrmb{{{\mathcal A}}_{\mc{S}',\alpha}&\hab{{{\mathcal A}}_{\mc{S}'}(\abs{f}^r) \nu^{1+\alpha}}}_{L^{v/r}(w_v)}^{\gamma-1} \\
  &\lesssim [w_v ]_{A_{v/r}}^{\ha{\gamma-1} \max\ha{1-\alpha,\frac{r}{v-r}}} \nrmb{{{{\mathcal A}}_{\mc{S}'}(\abs{f}^r)}}_{L^{p/r}(\mu)}^{\gamma-1}\\
  &\lesssim[w_v ]_{A_{v/r}}^{\ha{\gamma-1} \max\ha{1-\alpha,\frac{r}{v-r}}} [\mu]_{A_{p/r}}^{(\gamma-1)\max\ha{1,\frac{r}{p-r}}} \nrm{f}_{L^p(\mu)}^{(\gamma-1)r}.
\end{align*}

Collecting our estimates, we have shown
  \begin{align*}
{\mathcal B}_{\mc{S},b,r,s}^m(f,g)\lesssim C\ha{\mu,\lambda} \nrm{b}_{\BMO_{\nu}^{\alpha n}}^m  \nrm{f}_{L^p(\mu)}\|g\|_{L^{q'}(\la^{1-q'})}
\end{align*}
with
\begin{align*}
  C\ha{\mu,\lambda} &:= C(\lambda) \cdot [\mu]_{A_{p/r}}^{\max\ha{\frac1r,\frac{1}{p-r}}} \cdot  [w_v]_{A_{v/r}}^{\ha{\gamma-1} \max\ha{\frac{1-\alpha}{r},\frac{1}{v-r}}}  \\&\hspace{2cm}\cdot \has{\prod_{j=1}^{k-1} [ w_j]_{A_{u_j/r}}^{\max\ha{\frac{1-\alpha}{r}, \frac{1}{u_j-r}}}}.
\end{align*}
By H\"older's inequality, since $\frac{1}{v}=(1-\frac{1}{rm})\frac{1}{p}+\frac{1}{rm}\frac{1}{q}$, we have
$$
[w_v]_{A_{v/r}} = [\mu^{(1-\frac{1}{rm})\frac{v}{p}} \lambda^{\frac{1}{rm}\frac{v}{q}} ]_{A_{v/r}}\le [\mu]_{A_{p/r}}^{(1-\frac{1}{rm})\frac{v}{p}}[\la]_{A_{q/r}}^{\frac{1}{rm}\frac{v}{q}}.
$$
Similarly, we have
$$
[w_j]_{A_{u_j/r}} = [\mu^{\frac{j}{rm}\frac{u_j}{p}}\lambda^{(1-\frac{j}{rm})\frac{u_j}{q}}]_{A_{u_j/r}}\le [\mu]_{A_{p/r}}^{\frac{j}{rm}\frac{u_j}{p}}[\la]_{A_{q/r}}^{(1-\frac{j}{rm})\frac{u_j}{q}}.
$$
From this and from the above expression for $C\ha{\mu,\la}$, the values of $\beta_{\mu_1}$, and $\beta_{\lambda_1}$ follow by direct computation.
\end{proof}

We now turn to the case $q \leq p$. We start with the estimate in \eqref{estweight:1}, which works for any $m\geq 1$.

\begin{proof}[Proof of \eqref{estweight:1} in Theorem \ref{pleq} in the case $q\leq p$]
By Lemma \ref{oscsparse}, there exists a sparse collection of cubes $\mc{S}\subseteq \mc{S}'\subset {\mathscr D}$
such that for any $Q \in \mc{S}$,
\begin{align*}
\ipb{\abs{b-\ip{b}_Q}^mf}_{r,Q}^r &\lesssim  \frac{1}{\abs{Q}} \int_Q \has{\sum_{P \in \mc{S'}:P\subseteq Q} \frac{1}{\abs{P}}\int_P \abs{b-\ip{b}_P}\chi_P}^{rm} \abs{f}^r  \\
&=\frac{1}{\abs{Q}} \int_Q \has{\sum_{P \in \mc{S'}:P\subseteq Q} \frac{\nu(P)}{|P|}\O_{\nu}(b,P)\chi_P}^{rm}\abs{f}^r.
\end{align*}

Let $k:=\lfloor rm\rfloor$ and $\gamma:=rm-(k-1) \in [1,2)$. Applying subsequently Lemma \ref{coveq} $(k-1)$ times yields
\begin{align*}
&\int_Q \has{\sum_{P \in \mc{S}':P\subseteq Q}\frac{\nu(P)}{|P|}\O_{\nu}(b,P)\chi_P }^{rm}  \abs{f}^r\\
&\lesssim
\sum_{P_{k-1} \subseteq \cdots\subseteq P_1 \subseteq Q} \frac{\nu(P_1)}{|P_1|}\O_{\nu}(b,P_1) \cdots \frac{\nu(P_{k-1})}{|P_{k-1}|}\O_{\nu}(b,P_{k-1})\\
&\hspace{2cm}\cdot \int_{P_{k-1}} \has{\sum_{P_k \subseteq P_{k-1}}\frac{\nu(P_k)}{|P_k|}\O_{\nu}(b,P_k)\chi_{P_k} }^{\gamma}\abs{f}^r,
\end{align*}
where we omitted the assumption $P_1,\ldots,P_k \in \mc{S}'$ from our notation for brevity.

Define
\begin{align*}
   \mc{A}_{\mc{S}'}(\varphi) := \sum_{Q\in {\mathcal S}'}\langle|\varphi|\rangle_{{1},Q} \chi_Q,\\
   \mc{A}_{\mc{S}',\nu,b}(\varphi):=\mc{A}_{\mc{S}'}(\varphi)M^{\#}_\nu(b)\nu
\end{align*}
and let $\mc{A}_{\mc{S}',\nu,b}^j$ be the $j$-th iteration of $\mc{A}_{\mc{S}',\nu,b}$.
By Lemma \ref{cov} and H\"older's inequality, we can estimate
\begin{align*}
&\int_{P_{k-1}} \has{\sum_{P_k\subseteq P_{k-1}}\frac{\nu(P_k)}{\abs{P_k}} \O_{\nu}(b,P_{k})\chi_{P_k} }^\gamma  \abs{f}^r\\
&\eqsim \sum_{P_k\subseteq P_{k-1}}\frac{\nu(P_k)}{\abs{P_k}} \O_{\nu}(b,P_{k}) \has{\int_{P_k} \abs{f}^r}^{2-\gamma}\has{\sum_{P\subseteq P_k}\frac{\nu(P)}{\abs{P}} \O_{\nu}(b,P) \int_{P} \abs{f}^r}^{\gamma-1}\\
&\le\sum_{P_k\subseteq P_{k-1}}{\nu (P_k)}\Omega_\nu(b,P_k)\has{\frac{1}{\abs{P_k}}\int_{P_k}\abs{f}^r}^{2-\gamma}\has{\frac{1}{\abs{P_k}}\int_{P_k}{{\mathcal A}}_{{\mc{S}',\nu,b}}(\abs{f}^r)}^{\gamma-1}\\
&\leq \int_{P_{k-1}}{ \mc{A}_{\mc{S}'}(\abs{f}^r)^{2-\gamma} \cdot \mc{A}_{\mc{S}'}({{\mathcal A}}_{{\mc{S}',\nu,b}}(\abs{f}^r))^{\gamma-1}}\cdot M^{\#}_\nu (b)\nu =: \int_{P_{k-1}} h.
\end{align*}
Next, we can iteratively estimate
\begin{align*}
&\sum_{P_{k-1} \subseteq \cdots\subseteq P_1 \subseteq Q} \frac{\nu(P_1)}{|P_1|}\O_{\nu}(b,P_1) \cdots \frac{\nu(P_{k-1})}{|P_{k-1}|}\O_{\nu}(b,P_{k-1})\int_{P_{k-1}}h\\
&\le\sum_{P_{k-2} \subseteq \cdots\subseteq P_1 \subseteq Q} \frac{\nu(P_1)}{|P_1|}\O_{\nu}(b,P_1) \cdots \frac{\nu(P_{k-2})}{|P_{k-2}|}\O_{\nu}(b,P_{k-2})\int_{P_{k-2}}\mc{A}_{\mc{S}',\nu,b}(h)\\
&\le\cdots\le \int_Q\mc{A}_{\mc{S}',\nu,b}^{k-1}(h).
\end{align*}
Combined with the previous estimates, this implies
$$
{\mathcal B}_{\mc{S},b,r,s}^m(f,g)\lesssim \sum_{Q\in {\mathcal S}}\langle\mc{A}_{\mc{S}',\nu,b}^{k-1}(h)^{\frac1r}\rangle_{r,Q}\langle|g|\rangle_{s',Q}|Q|.
$$
From this, using Theorem \ref{sparsefrac}, we obtain
$$
{\mathcal B}_{b,r,s}^m(f,g)\lesssim \big([\la]_{A_{q/r}}[\la]_{{\rm{RH}}_{(s/q)'}}\big)^{\max(\frac{s-1}{s-q},\frac{1}{q-r})}\|\mc{A}_{\mc{S}',\nu,b}^{k-1}(h)\|_{L^{q/r}(\la)}^{1/r}\|g\|_{L^{q'}(\la^{1-q'})}.
$$

Define for $j=1,\ldots,k-1$
\begin{align*}
  \tfrac{1}{u_j} &:= \tfrac{1}{q}- \tfrac{j}{rt} = \tfrac{j}{rm}\tfrac{1}{p}+(1-\tfrac{j}{rm})\tfrac{1}{q},\\
  w_j&:= \mu^{\frac{j}{rm}\frac{u_j}{p}}\lambda^{(1-\frac{j}{rm})\frac{u_j}{q}}.
\end{align*}
Applying H\"older's inequality along with (\ref{spbound}) $(k-1)$ times, we estimate
\begin{align*}
\nrm{\mc{A}^{k-1}_{\mc{S}',\nu,b}(h)}_{L^{q/r}(\lambda)}&= \nrmb{M^{\#}_{\nu}(b)\nu^{1/t}\mc{A}_{{\mc{S}'}}(\mc{A}^{k-2}_{\mc{S}',\nu,b}h)\nu^{1/t'} \lambda^{r/q}}_{L^{q/r}}\\
&\le \nrm{M^{\#}_{\nu}(b)}_{L^t(\nu)} \nrm{\mc{A}_{\mc{S}'}(\mc{A}^{k-2}_{\mc{S}',\nu,b}h)}_{L^{u_1/r}(w_1)}\\
&\lesssim  [w_1]_{A_{u_1/r}}^{\max\ha{1, \frac{r}{u_1-r}}} \nrm{M^{\#}_{\nu}(b)}_{L^t(\nu)}  \nrm{\mc{A}^{k-2}_{\mc{S}',\nu,b}h}_{L^{u_1/r}(w_1)}\\&\lesssim \cdots \\
&\lesssim  \has{\prod_{j=1}^{k-1} [w_j ]_{A_{u_j/r}}^{\max\ha{1, \frac{r}{u_j-r}}}} \nrm{M^{\#}_{\nu}(b)}_{L^t(\nu)}^{k-1}  \nrm{h}_{L^{u_{k-1}/r}(w_{k-1})}.
\end{align*}

Now define $\frac{1}{v} = \frac{1}{p}+\frac1{rt}$ and
$$
w_v:= \mu^{(1-\frac1{rm})\frac{v}{p}}\lambda^{\frac1{rm}\frac{v}{q}}.
$$
  Noting that $1-\frac{k-1}{rm}=\frac{\gamma}{rm}$  and thus
  \begin{align*}
    \tfrac{1}{u_{k-1}} = \tfrac{1}{q} - \tfrac{k-1}{rt}  &= \tfrac{1}{p}+\tfrac{\gamma}{rt}
    =\tfrac{1}{rt} +(2-\gamma)\tfrac{1}{p} + (\gamma-1)\tfrac{1}{v},
  \end{align*}
  we can estimate by H\"older's inequality,
\begin{align*}
\nrm{h}_{L^{u_{k-1}/r}(w_{k-1})}&= \nrm{h}_{L^{u_{k-1}/r}(\mu^{(1-\frac{\gamma}{rm})\frac{u_{k-1}}{p}}\lambda^{\frac{\gamma}{rm}\frac{u_{k-1}}{q}} )}\\
  &\leq   \nrm{M^{\#}_{\nu}(b)}_{L^t(\nu)} \nrmb{\mc{A}_{\mc{S}'}(\abs{f}^r)}_{L^{p/r}(\mu)}^{2-\gamma} \\&\hspace{2cm}\cdot \nrmb{{{\mathcal A}}_{\mc{S}'}\hab{{{\mathcal A}}_{\mc{S}',\nu,b}(\abs{f}^r)}}_{L^{v/r}( w_v)}^{\gamma-1}.
\end{align*}
For the second term on the right-hand side we have, by (\ref{spbound}),
\begin{align*}
   \nrmb{{\mathcal A}_{\mc{S}'}(\abs{f}^r)}_{L^{p/r}(\mu)}^{2-\gamma}\lesssim [\mu]_{A_{p/r}}^{(2-\gamma)\max\ha{1,\frac{r}{p-r}}} \nrm{f}_{L^p(\mu)}^{(2-\gamma)r},
\end{align*}
and for the third term we have
\begin{align*}
  &\nrmb{{{\mathcal A}}_{\mc{S}'}\hab{{{\mathcal A}}_{\mc{S}',\nu,b}(\abs{f}^r)}}_{L^{v/r}( w_v)}^{\gamma-1} \\
  &\lesssim [ w_v ]_{A_{v/r}}^{(\gamma-1)\max\ha{1,\frac{r}{v-r}}} \nrmb{{{\mathcal A}}_{\mc{S}',\nu,b}(\abs{f}^r)}_{L^{v/r}(w_v )}^{\gamma-1}\\
  &\lesssim  [ w_v]_{A_{v/r}}^{(\gamma-1)\max\ha{1,\frac{r}{v-r}}}\nrm{M^{\#}_{\nu}b}_{L^t(\nu)}^{\gamma-1} \nrmb{{{\mathcal A}}_{\mc{S}'}(\abs{f}^r)}_{L^{p/r}(\mu)}^{\gamma-1}\\
  &\lesssim  [ w_v]_{A_{v/r}}^{(\gamma-1)\max\ha{1,\frac{r}{v-r}}}  [\mu]_{A_{p/r}}^{(\gamma-1)\max\ha{1,\frac{r}{p-r}}} \nrm{M^{\#}_{\nu}(b)}_{L^t(\nu)}^{\gamma-1}\nrm{f}_{L^p(\mu)}^{(\gamma-1)r}.
\end{align*}

Collecting our estimates, we have shown
  \begin{align*}
{\mathcal B}_{b,r,s}^m(f,g) \lesssim C\ha{\mu,\lambda}\, [\la]_{{\rm{RH}}_{(s/q)'}}^{\max(\frac{s-1}{s-q},\frac{1}{q-r})}  \nrm{M^{\#}_\nu (b)}_{L^t(\nu)}^m \nrm{f}_{L^p(\mu)}\|g\|_{L^{q'}(\la^{1-q'})}
\end{align*}
with
\begin{align*}
  C\ha{\mu,\lambda} &=  [\mu]_{A_{p/r}}^{\max\ha{\frac1r,\frac{1}{p-r}}}  \cdot [w_v]_{A_{v/r}}^{\ha{\gamma-1}\max\ha{\frac{1}{r},\frac{1}{v-r}}} \\&\hspace{2cm}\cdot \has{\prod_{j=1}^{k-1} [w_j ]_{A_{u_j/r}}^{\max\ha{\frac1r, \frac{1}{u_j-r}}}}\cdot [\lambda]_{A_{q/r}}^{\max(\frac{s-1}{s-q},\frac{1}{q-r})}.
\end{align*}
By H\"older's inequality,
$$
[w_v]_{A_v/r} = [\mu^{(1-\frac1{rm})\frac{v}{p}}\lambda^{\frac1{rm}\frac{v}{q}}]_{A_{v/r}}\le [\mu]_{A_{p/r}}^{(1-\frac1{rm})\frac{rv}{p}}[\la]_{A_{q/r}}^{\frac1m\frac{v}{q}}
$$
and
$$
[w_j]_{A_{u_j/r}}=[\mu^{\frac{j}{rm}\frac{u_j}{p}}\lambda^{(1-\frac{j}{rm})\frac{u_j}{q}} ]_{A_{u_j/r}}\le [\mu]_{A_{p/r}}^{\frac{j}{rm}\frac{u_j}{p}}[\la]_{A_{q/r}}^{(1-\frac{j}{rm})\frac{u_j}{q}}.
$$
From this and from the above expression for $C\ha{\mu,\la}$, the values of $\beta_{\mu_1}$ and $\beta_{\la_1}$ follow by direct computation.
\end{proof}

For the estimate \eqref{estweight:2} in Theorem \ref{pleq} in the case $q \leq p$, we need a Fefferman--Stein-type lemma (see, e.g., \cite{LPRR19}).
\begin{lemma}\label{lem:FS}
  Let $p \in (1,\infty)$, $0<\delta<1$ and let $w$ be a weight. For any sparse family $\mc{S} \subset \ms{D}$ and $f \in L^1_{\loc}(\R^n)$ we have
  $$
\nrms{\sum_{Q\in{\mathcal S}}\langle|f|\rangle_{1,Q}\chi_Q}_{L^p(w)}\lesssim p'\tfrac{1}{\d^{1/p'}}\|f\|_{L^p(M_{1+\d}w)}.
$$
\end{lemma}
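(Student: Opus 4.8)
The plan is to argue by duality together with the (elementary but crucial) observation that $u:=M_{1+\delta}w$ is an $A_1$ weight with a controlled constant. Write $Af:=\sum_{Q\in\mc{S}}\langle\abs{f}\rangle_{1,Q}\chi_Q$. We may assume $f\ge 0$ and, after a routine truncation/limiting argument, that $\mc{S}$ is finite, so that all quantities below are finite and division arguments are legitimate. Computing the $L^p(w)$-norm by pairing with $L^{p'}(w^{1-p'})$ over Lebesgue measure, it suffices to bound
\[ \sum_{Q\in\mc{S}}\langle f\rangle_{1,Q}\,\langle h\rangle_Q\,\abs{Q} \]
uniformly over $0\le h$ with $\nrm{h}_{L^{p'}(w^{1-p'})}\le 1$.

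The key input is that $u=\bigl(M(w^{1+\delta})\bigr)^{1/(1+\delta)}$ is the $\tfrac{1}{1+\delta}$-th power of a maximal function, so the Coifman--Rochberg theorem yields $[u]_{A_1}\le c_n\tfrac{1+\delta}{\delta}\le 2c_n\delta^{-1}$; moreover Jensen's inequality gives the pointwise bound $\langle w\rangle_Q\le\langle w^{1+\delta}\rangle_Q^{1/(1+\delta)}\le\essinf_Q u$ for every cube $Q$. These two facts are what let one replace $w$ by $u$ on the $f$-side at the cost of a controlled negative power of $\delta$. Concretely, I would estimate $\langle f\rangle_{1,Q}\le\langle f^pu\rangle_Q^{1/p}\langle u^{1-p'}\rangle_Q^{1/p'}$ by Hölder's inequality against $u\,\ddn x$ inside $Q$, split off a power of $w$ from $\langle h\rangle_Q$ by the analogous Hölder step, and use the $A_1$-bound for $u$ and the estimate $\langle w\rangle_Q\le\essinf_Q u$ to control the resulting products of averages; the sum over $Q$ then collapses to a bilinear sparse form in the measures $f^pu\,\ddn x$ and $h^{p'}w^{1-p'}\,\ddn x$, which is handled by a quantitative dyadic Carleson embedding theorem. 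It is precisely in combining these pieces that the constants $p'$ and $\delta^{-1/p'}$ are produced.

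An equivalent and perhaps cleaner route is to exploit that the sparse operator is self-adjoint for the Lebesgue pairing: the claim then reduces to the two-weight bound $\nrm{A}_{L^{p'}(w^{1-p'})\to L^{p'}(u^{1-p'})}\lesssim p'\delta^{-1/p'}$, and since $u^{1-p'}\in A_{p'}$ with $[u^{1-p'}]_{A_{p'}}=[u]_{A_p}^{1/(p-1)}\lesssim\delta^{-1/(p-1)}$ by \eqref{dualityAp}, and $u^{1-p'}$ is dominated by $w^{1-p'}$, the Sawyer testing conditions for $A$ can be verified by hand. The main obstacle, in either approach, is the sharp bookkeeping of the constants: naive estimates—e.g.\ the monotonicity $\nrm{Af}_{L^p(w)}\le\nrm{Af}_{L^p(u)}$ followed by the weighted $L^p$-bound for sparse operators, or splitting the bilinear sum by two separate Hölder inequalities—lose additional powers of $p'$ and $\delta^{-1}$; obtaining exactly $p'\delta^{-1/p'}$ requires the careful Carleson-embedding (equivalently, testing) argument, as in \cite{LPRR19}, and I would follow that argument while tracking each constant through the iteration.
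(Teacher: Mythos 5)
The paper does not actually prove Lemma \ref{lem:FS}; it is cited directly from \cite{LPRR19}. So the relevant question is whether your sketch would, if expanded, give a complete proof. As it stands, it has a genuine gap at precisely the step where the constants are supposed to appear.

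Your preliminary observations are correct and useful: $u:=M_{1+\delta}w=(M(w^{1+\delta}))^{1/(1+\delta)}$ is an $A_1$ weight with $[u]_{A_1}\lesssim\delta^{-1}$ by Coifman--Rochberg, and $\langle w\rangle_Q\le\langle w^{1+\delta}\rangle_Q^{1/(1+\delta)}\le\essinf_Qu$ by Jensen. Dualizing reduces the lemma to the bilinear bound $\sum_Q\langle f\rangle_Q\langle h\rangle_Q|Q|\lesssim p'\delta^{-1/p'}\|f\|_{L^p(u)}\|h\|_{L^{p'}(w^{1-p'})}$. However, the reduction you then perform is problematic. Applying Hölder inside each cube against $u$ on the $f$-side, and against $w^{1-p'}$ on the $h$-side, and then using $\langle u^{1-p'}\rangle_Q^{1/p'}\langle w\rangle_Q^{1/p}\le(\essinf_Qu)^{(1-p')/p'+1/p}=1$, yields
\[
\sum_Q\langle f\rangle_Q\langle h\rangle_Q|Q|\le\sum_Q\langle F\rangle_Q^{1/p}\langle G\rangle_Q^{1/p'}|Q|,\qquad F=|f|^pu,\ G=|h|^{p'}w^{1-p'},
\]
with constant $1$ and with the $A_1$ information on $u$ completely discarded; it is now hidden inside $F$. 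You then assert that this bilinear sparse form ``is handled by a quantitative dyadic Carleson embedding theorem'' and that ``it is precisely in combining these pieces that the constants $p'$ and $\delta^{-1/p'}$ are produced.'' But the estimate $\sum_Q\langle F\rangle_Q^{1/p}\langle G\rangle_Q^{1/p'}|Q|\lesssim\|F\|_1^{1/p}\|G\|_1^{1/p'}$ is \emph{not} a direct consequence of Carleson embedding for arbitrary $F,G\in L^1$: Carleson embedding gives $\sum_Q|Q|\langle F\rangle_Q^q\lesssim(q')^q\|F\|_{L^q}^q$ only for $q>1$, and the Hölder-on-the-sum step one would need here collapses to the failing $q=1$ case. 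The naive pointwise bound $\langle F\rangle_Q\le M^dF(x)$ for $x\in E_Q$ leads to $\|M^dF\|_1^{1/p}\|M^dG\|_1^{1/p'}$, which is infinite in general. To make the bilinear form close, one has to re-inject the $A_1$ structure of $u$ (e.g.\ that $\mc{S}$ is $\eta/[u]_{A_1}$-sparse with respect to $u\,\ddn x$), and doing so without losing additional powers of $p'$ and $\delta^{-1}$ is exactly the heart of the proof. Your second paragraph (self-adjointness plus Sawyer testing for the pair $(w^{1-p'},u^{1-p'})$) points at a legitimate alternative route, but you only assert that the testing conditions ``can be verified by hand'' without doing so, and your own preceding remark that the naive one-weight bound $[u^{1-p'}]_{A_{p'}}^{\max(1,p-1)}\lesssim\delta^{-1}$ or $\delta^{-(p'-1)}$ is worse than $\delta^{-1/p'}$ shows that this verification is nontrivial. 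In short: the ingredients you list are the right ones, but the argument at the decisive quantitative step is missing, and the step as described cannot be completed in the form you state. The proposal is a sketch that ultimately defers to the proof in \cite{LPRR19}, which is also all the paper itself does.
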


Using this lemma, we can now prove the estimate \eqref{estweight:2} in Theorem~\ref{pleq} in the case $q \leq p$. This approach was suggested qualitatively in the case $p=q$ by Li \cite{Li22}.

\begin{proof}[Proof of \eqref{estweight:2} in Theorem \ref{pleq}] By Lemma \ref{oscsparse}, there exists a sparse collection of cubes $\mc{S}\subseteq \mc{S}'\subset {\mathscr D}$
such that for any $Q \in \mc{S}$,
$$\big\langle|b-\langle b\rangle_Q|^m|f|\big\rangle_{r,Q}\lesssim |Q|^{-1/r}\nrms{\sum_{Q\in{\mathcal S}'}\langle M^{\#}_\nu b \cdot \nu \rangle_{1,Q}\chi_Q}_{L^{rm}(|f|^r)}^m.$$
Since $m\geq 2$, we have by Lemma \ref{lem:FS},
$$\big\langle|b-\langle b\rangle_Q|^mf\big\rangle_{r,Q}\lesssim |Q|^{-1/r}\frac{1}{\d^{m/(rm)'}}\|M^{\#}_\nu b\cdot \nu\chi_Q\|_{L^{rm}(M_{1+\d}(|f|^r))}^m.$$
Therefore, by Theorem \ref{sparsefrac}, Buckley's estimate \cite{Bu93} and H\"older's inequality, we have for $0<\delta<1$,
\begin{align*}
{\mathcal B}_{{\mathcal S},b,r,s}^m(f,g)&\lesssim \tfrac{1}{\d^{m/(rm)'}}
\sum_{Q\in{\mathcal S}}\big\langle(M^{\#}_\nu b\cdot \nu)^m M_{(1+\d)r}f \big\rangle_{r,Q}\ip{\abs{g}}_{s',Q} \abs{Q}\\
&\lesssim \tfrac{1}{\d^{m/(rm)'}} \hab{[\la]_{A_{q/r}}[\lambda]_{{\rm{RH}}_{(s/q)'}}}^{\max(\frac{s-1}{s-q},\frac{1}{q-r})}\\&\hspace{2cm}\cdot \|(M^{\#}_\nu b\cdot \nu)^m M_{(1+\d)r}f\|_{L^q(\lambda)} \nrm{g}_{L^{q'}(\lambda^{1-q'})}\\
&\lesssim \tfrac{1}{\d^{m/(rm)'}} \hab{[\la]_{A_{q/r}}[\lambda]_{{\rm{RH}}_{(s/q)'}}}^{\max(\frac{s-1}{s-q},\frac{1}{q-r})}\\&\hspace{2cm}\cdot \| M_{(1+\d)r}f\|_{L^p(\mu)} \nrm{g}_{L^{q'}(\lambda^{1-q'})}\nrm{M^{\#}_\nu (b)}_{L^t(\nu)}^m\\
&\lesssim \tfrac{1}{\d^{m/(rm)'}}[\mu]_{A_{\frac{p}{(1+\d)r}}}^{\frac{1}{p-(1+\d)r}}\hab{[\la]_{A_{p/r}} [\lambda]_{{\rm{RH}}_{(s/q)'}}}^{\max(\frac{s-1}{s-q},\frac{1}{q-r})} \\&\hspace{2cm}\cdot \|f\|_{L^p(\mu)}\nrm{g}_{L^{q'}(\lambda^{1-q'})}\nrm{M^{\#}_\nu (b)}_{L^t(\nu)}^m.
\end{align*}
Let $c_n>0$ be the constant from  Proposition \ref{rhweights} and set $$\tfrac{1}{\d} :={(p/r)'\cdot c_n} \, [\mu]_{A_{p/r}}^{\frac{r}{p-r}}.$$ \eqref{estweight:2} now follows from Proposition \ref{rhweights}.
\end{proof}

\begin{remark}
  The proof of \eqref{estweight:2} in Theorem \ref{pleq} also works in the case $m=1$ and $r>1$ with a constant depending on $r$, as can be seen from the proof. However, in applications to concrete operators, this will yield worse dependence on the weight characteristics of $\mu$ and $\lambda$ than \eqref{estweight:1}.

  The method of proof of \eqref{estweight:2} is likely also applicable to the case $p<q$, again yielding an incomparable bound to \eqref{estweight:1}. This would require one to develop a fractional version of Lemma \ref{lem:FS}. Since our interest in quantitative estimates is mainly in the case $p=q$, we leave this extension to the interested reader.
\end{remark}

\section{Weighted bounds for commutators}\label{sec:application}
In this final section we will apply the results from the previous sections to concrete operators.
Let us first formulate a general result, in a qualitative form, which is an immediate corollary of
Theorems \ref{sdp} and~\ref{pleq}  and duality.

\begin{theorem}\label{wgeneral}
Let $1\le r<p,q< s\le\infty$ and $m\in {\mathbb N}$. Let $T$ be a sublinear operator and $b \in L^1_{\loc}(\R^n)$. Assume the following conditions:
\begin{itemize}
\item Suppose $T$ and ${\mathcal M}^{\#}_{T,s}$ are  locally weak $L^r$-bounded.
\item Let $\mu\in A_{p/r}\cap {\rm{RH}}_{(s/p)'}$, $\lambda \in A_{q/r}\cap {\rm{RH}}_{(s/q)'}$
 and define the Bloom weight $$\nu^{1+\frac{1}{pm}-\frac{1}{qm}}:= \mu^{\frac{1}{pm}}\lambda^{-\frac{1}{qm}}.$$
\end{itemize}
Then:
\begin{enumerate}[(i)]
\item \label{it:qualitative1} If $p\leq q$ and  $\alpha:= \frac{1}{pm}-\frac1{qm}$, we have
$$
\|T_b^m\|_{L^p(\mu) \to L^q(\la)}\lesssim_{\mu,\lambda} \nrm{b}_{\BMO_{\nu}^{\alpha n}}^m .
$$
\item \label{it:qualitative2}
If $q\leq p$ and  $\frac{1}{t}:=\frac{1}{qm}-\frac1{pm}$ we have
$$
\|T_b^m\|_{L^p(\mu)\to L^q(\la)}\lesssim_{\mu,\lambda} \nrm{M^{\#}_\nu b}_{L^t(\nu)}^m .
$$
\end{enumerate}
\end{theorem}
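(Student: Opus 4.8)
The plan is to feed the pointwise sparse bound of Theorem~\ref{sdp} into the two–weight estimate of Theorem~\ref{pleq}, once for each of the two sparse forms that appear, and to close the argument by $L^q(\lambda)$–$L^{q'}(\lambda^{1-q'})$ duality.

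By a routine density and truncation argument it suffices to prove the stated inequalities for $f,g\in L^\infty_c(\R^n)$ and $b\in L^\infty$; the general $b\in L^1_{\loc}$ is recovered by truncating $b$ and passing to the limit with Fatou's lemma and the resulting a priori bounds. For such $f,g$, the dyadic formulation in Remark~\ref{dyadversion} yields $3^n$ dyadic lattices $\ms D_j$ and sparse families $\mc S_j\subset\ms D_j$ with
\begin{align*}
  \int_{\R^n}\abs{T_b^m f}\,\abs{g}\;\lesssim\;\sum_{j=1}^{3^n}\Bigl(\mc B^m_{\mc S_j,b,r,s}(f,g)+\mc B^m_{\mc S_j,b,s',r'}(g,f)\Bigr),
\end{align*}
the implicit constant being controlled by $\varphi_{T,r}$ and $\varphi_{\mc M^\#_{T,s},r}$. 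Since
$\nrm{T_b^m f}_{L^q(\lambda)}=\sup\bigl\{\,\abss{\int_{\R^n} T_b^m f\cdot g}:\nrm{g}_{L^{q'}(\lambda^{1-q'})}\le 1\bigr\}$,
it is enough to bound each of the two families of sparse forms by a constant times $\nrm{f}_{L^p(\mu)}\nrm{g}_{L^{q'}(\lambda^{1-q'})}$ with the advertised $b$–factor.

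The first form is immediate: the hypotheses $\mu\in A_{p/r}$ and $\lambda\in A_{q/r}\cap\mathrm{RH}_{(s/q)'}$ are exactly those of Theorem~\ref{pleq}, so (using the estimate \eqref{estweight:1}, which holds for all $p,q,m$) $\mc B^m_{\mc S_j,b,r,s}(f,g)$ is bounded by $\nrm{f}_{L^p(\mu)}\nrm{g}_{L^{q'}(\lambda^{1-q'})}$ times $\nrm{b}_{\BMO_\nu^{\alpha n}}^m$ when $p\le q$ and times $\nrm{M^\#_\nu b}_{L^t(\nu)}^m$ when $q\le p$. For the dual form $\mc B^m_{\mc S_j,b,s',r'}(g,f)$ we apply Theorem~\ref{pleq} after the substitution $r\mapsto s'$, $s\mapsto r'$, $p\mapsto q'$, $q\mapsto p'$, $\mu\mapsto\lambda^{1-q'}$, $\lambda\mapsto\mu^{1-p'}$. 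The exponent constraints $r<p,q<s$ become $s'<q',p'<r'$, and the classical duality of Muckenhoupt–reverse H\"older classes under $w\mapsto w^{1-a'}$ — most transparently expressed through the symmetric characteristic $[\,\cdot\,]_{p,(r,s)}$ of Remark~\ref{rem:symmweight}, for which $[\sigma]_{p,(r,s)}=[\sigma^{-1}]_{p',(s',r')}$ — turns $\mu\in A_{p/r}\cap\mathrm{RH}_{(s/p)'}$ into $\mu^{1-p'}\in A_{p'/s'}\cap\mathrm{RH}_{(r'/p')'}$ and $\lambda\in A_{q/r}\cap\mathrm{RH}_{(s/q)'}$ into $\lambda^{1-q'}\in A_{q'/s'}$, which are precisely the hypotheses required for the dualized data. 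Finally, using $\tfrac{1-q'}{q'}=-\tfrac1q$ and $\tfrac{1-p'}{p'}=-\tfrac1p$ one checks that the Bloom weight attached to the dualized problem is again $\nu$, that its fractional exponent is again $\alpha$, and that the dual of $t$ is again $t$; moreover $q'\le p'$ precisely when $p\le q$, so the dual form is estimated by the same quantity as the first form. Summing over $j$ gives (i) and (ii).

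The only step that is more than bookkeeping is the translation of the weight hypotheses under duality and the verification that the Bloom weight is ``self–dual'' in this sense; it is also exactly here that the reverse H\"older hypothesis $\mu\in\mathrm{RH}_{(s/p)'}$ — which is absent from Theorem~\ref{pleq} itself, and reflects the asymmetric roles of the two weights in the two sparse forms — is used, namely to guarantee that $\mu^{1-p'}$ lies in the reverse H\"older class demanded by the dualized application of Theorem~\ref{pleq}.
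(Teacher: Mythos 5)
Your proof is correct and follows essentially the same route as the paper: it feeds the sparse domination of Remark~\ref{dyadversion} (a consequence of Theorem~\ref{sdp}) into Theorem~\ref{pleq}, applies it once directly and once under the dual substitution $r\mapsto s'$, $s\mapsto r'$, $p\mapsto q'$, $q\mapsto p'$, $\mu\mapsto\lambda^{1-q'}$, $\lambda\mapsto\mu^{1-p'}$, and checks the dualized weight and Bloom-weight hypotheses. The only difference is expository: you spell out the bookkeeping of the dualization (including the identity $[\sigma]_{p,(r,s)}=[\sigma^{-1}]_{p',(s',r')}$ and the self-duality of $\nu$, $\alpha$, $t$), which the paper leaves as a short remark.
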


\begin{proof} Fix $f,g \in L^\infty_c(\R^n)$. By Remark \ref{dyadversion},  there exist $3^n$ dyadic lattices ${\mathscr D}_j$
and sparse families ${\mathcal S}_j\subset {\mathscr D}_j$ such that
$$\int_{{\mathbb R}^n}|T_b^mf||g|\lesssim \sum_{j=1}^{3^n}\big({\mathcal B}_{{\mathcal S}_j,b,r,s}^m(f,g)+{\mathcal B}_{{\mathcal S}_j,b,s',r'}^m(g,f)\big).$$
Therefore, the claims follow from applying Theorem \ref{pleq} twice, directly and dually. We observe that in order to apply Theorem \ref{pleq} to the dual terms
 ${\mathcal B}_{{\mathcal S}_j,b,s',r'}^m(g,f)$ in the dual spaces, we need the conditions
 \begin{itemize}
   \item $\la^{1-q'}\in A_{q'/s'}$
   \item $\mu^{1-p'}\in A_{p'/s'}\cap {\rm{RH}}_{(r'/p')'}$
 \end{itemize}
 which follow directly from our assumptions on $\mu$ and $\lambda$.
\end{proof}

We refer to \cite[Remark 4.4]{LLO22} for a list of operators satisfying the assumptions, and thus the conclusion, of Theorem \ref{wgeneral}. Note that even unweighted bounds for commutators with some of the operators on that list were previously unknown.

Next, we examine a quantitative form of Theorem \ref{wgeneral} in an important particular case of interest.
\begin{theorem}\label{partcase}
Let $1<p<\infty$  and $m\in {\mathbb N}$. Let $T$ be a sublinear operator and $b \in L^1_{\loc}(\R^n)$. Assume the following conditions:
\begin{itemize}
\item Suppose that for all $1<r <2<s<\infty$, both $T$ and ${\mathcal M}^{\#}_{T,s}$ are  locally weak $L^r$-bounded, and
$$\f_{T,r}(\la_{m,n})+\f_{\mc{M}^{\#}_{T,s},r}(\la_{m,n})\leq \psi(r',s),$$
where $\la_{m,n}>0$ is the constant provided by Theorem \ref{sdp} and $\psi\colon [1,\infty)^2 \to [1,\infty)$ is non-decreasing in both variables.
\item Let $\mu,\lambda\in A_{p}$ and define the Bloom weight
$\nu:= (\frac{\mu}{\lambda})^{\frac{1}{pm}}.$
\end{itemize}
Then
$$\|T_b^m\|_{L^p(\mu) \to L^p(\la)}\lesssim K_{p}(\mu,\lambda) C_{p,\psi}(\mu,\la)\nrm{b}_{\BMO_{\nu}}^m ,$$
where
\begin{align*}
  K_{p}(\mu,\lambda) &\phantom{:}\leq \begin{cases}
    \big([\mu]_{A_p}[\la]_{A_p}\big)^{\frac{m+1}{2}\max(1,\frac{1}{p-1})}, &m\geq 1,\\
    [\mu]_{A_p}^{\frac{m}{p-1}}[\la]_{A_p}^{\max\ha{1,\frac{1}{p-1}}}+
[\mu]_{A_p}^{\max(1,\frac{1}{p-1})}[\la]_{A_p}^m, \qquad &m\geq 2,
  \end{cases}\\
  C_{p,\psi}(\mu,\la)&:=\psi\big(c_{p,n,m}\max([\mu]_{A_p},[\la]_{A_p})^{\frac{1}{p-1}}, c_{p,n,m}\max\ha{[\mu]_{A_p},[\la]_{A_p}}\big).
\end{align*}
\end{theorem}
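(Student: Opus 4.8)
The plan is to insert the sparse domination of Remark \ref{dyadversion} into Theorem \ref{pleq}, used both directly and dually, after choosing the auxiliary exponents $r,s$ as functions of the weight characteristics by means of the self-improvement Proposition \ref{rhweights}. So fix $f,g\in L^\infty_c(\R^n)$ and an admissible pair $1<r<2<s<\infty$ to be specified later. By Remark \ref{dyadversion} together with the hypothesis $\f_{T,r}(\la_{m,n})+\f_{\mc{M}^{\#}_{T,s},r}(\la_{m,n})\le\psi(r',s)$, we have
$$\int_{\R^n}|T_b^mf||g|\le C_{m,n}\,\psi(r',s)\sum_{j=1}^{3^n}\big({\mathcal B}_{\mc{S}_j,b,r,s}^m(f,g)+{\mathcal B}_{\mc{S}_j,b,s',r'}^m(g,f)\big).$$
Each ${\mathcal B}_{\mc{S}_j,b,r,s}^m(f,g)$ will be estimated by Theorem \ref{pleq} with $p=q$, and each dual form ${\mathcal B}_{\mc{S}_j,b,s',r'}^m(g,f)$ by Theorem \ref{pleq} applied with the parameters $(r,s,p,q)$, functions $(f,g)$ and weights $(\mu,\lambda)$ replaced by $(s',r',p',p')$, $(g,f)$ and $(\lambda^{1-p'},\mu^{1-p'})$; one checks $(p')'=p$, $(\mu^{1-p'})^{1-(p')'}=\mu$, and that the Bloom weight of the dual application is again $(\mu/\lambda)^{1/pm}=\nu$, so $\|b\|_{\BMO_\nu}$ occurs in both. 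Since $p=q$ we have $\alpha=0$ and $t=\infty$, hence $\BMO_\nu^{\alpha n}=\BMO_\nu$, $\|M^{\#}_\nu b\|_{L^t(\nu)}=\|b\|_{\BMO_\nu}$, and every application of Theorem \ref{pleq} (including \eqref{estweight:2} when $m\geq 2$) produces the factor $\|b\|_{\BMO_\nu}^m$.

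Next, set $W:=\max([\mu]_{A_p},[\lambda]_{A_p})\geq 1$ and choose $r':=c_{p,n,m}W^{1/(p-1)}$ and $s:=c_{p,n,m}W$ with $c_{p,n,m}$ a large constant. Then $1<r<\min\{2,1+\tfrac1m,p\}$ and $s>2$, so in particular all $\beta$-exponents in Theorem \ref{pleq} are defined and $\lfloor rm\rfloor=m$. Applying Proposition \ref{rhweights} to $\mu$, $\lambda$ and, via \eqref{dualityAp}, to $\mu^{1-p'}$, $\lambda^{1-p'}$ (whose $A_{p'}$-characteristics equal $[\mu]_{A_p}^{1/(p-1)}$ and $[\lambda]_{A_p}^{1/(p-1)}$), one verifies that for $c_{p,n,m}$ large enough all the membership conditions needed by the direct and dual invocations of Theorem \ref{pleq} hold, namely $\mu,\lambda\in A_{p/r}\cap {\rm RH}_{(s/p)'}$, $\lambda^{1-p'},\mu^{1-p'}\in A_{p'/s'}$ and $\mu^{1-p'}\in {\rm RH}_{(r'/p')'}$, with $[\mu]_{A_{p/r}}\le c_n[\mu]_{A_p}$, $[\lambda]_{A_{p/r}}\le c_n[\lambda]_{A_p}$, $[\lambda^{1-p'}]_{A_{p'/s'}}\le c_n[\lambda]_{A_p}^{1/(p-1)}$, $[\mu^{1-p'}]_{A_{p'/s'}}\le c_n[\mu]_{A_p}^{1/(p-1)}$, and all occurring reverse-Hölder constants $\le c_n$. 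Moreover, as $\psi$ is non-decreasing, $\psi(r',s)\le\psi\big(c_{p,n,m}W^{1/(p-1)},c_{p,n,m}W\big)=C_{p,\psi}(\mu,\lambda)$.

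It remains to do the bookkeeping. With $q=p$ (so $\alpha=0$) and $k=\lfloor rm\rfloor=m$, each exponent $\beta_{\mu_1},\beta_{\lambda_1},\beta_{\lambda_2},\beta_{\mu_2}$ of Theorem \ref{pleq}, and likewise its dual counterpart, is a smooth function of $(r,1/s)$ near $(1,0)$ taking there the values of Remark \ref{partcases}: $\beta_{\mu_1}=\beta_{\lambda_1}+\beta_{\lambda_2}=\tfrac{m+1}{2}\max(1,\tfrac1{p-1})$, $\beta_{\mu_2}=\tfrac{m}{p-1}$, $\beta_{\lambda_2}=\max(1,\tfrac1{p-1})$ (with $p$ replaced by $p'$, and $\tfrac1{p'-1}=p-1$, in the dual case). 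Hence $\big|\beta_\bullet(r,s)-\beta_\bullet(1,\infty)\big|\lesssim_{p,n,m}(r-1)+\tfrac1s$, and since $[w]_{A_\bullet}\lesssim_{p,n,m}W\geq1$ for each relevant weight $w$ while $(r-1)\log W$ and $s^{-1}\log W$ stay bounded for our choice of $r,s$, we get $[w]_{A_\bullet}^{\beta_\bullet(r,s)}\lesssim_{p,n,m}[w]_{A_\bullet}^{\beta_\bullet(1,\infty)}$. Combining this with $[\mu^{1-p'}]_{A_{p'}}=[\mu]_{A_p}^{1/(p-1)}$ and $[\lambda^{1-p'}]_{A_{p'}}=[\lambda]_{A_p}^{1/(p-1)}$: the estimate \eqref{estweight:1} applied directly and dually each yields the factor $\big([\mu]_{A_p}[\lambda]_{A_p}\big)^{\frac{m+1}{2}\max(1,\frac1{p-1})}$; for $m\geq 2$, \eqref{estweight:2} applied directly yields $[\mu]_{A_p}^{\frac{m}{p-1}}[\lambda]_{A_p}^{\max(1,\frac1{p-1})}$ and applied dually yields $[\mu]_{A_p}^{\max(1,\frac1{p-1})}[\lambda]_{A_p}^{m}$. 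Using in each of the $3^n$ blocks whichever of the available bounds for $C(\mu,\lambda)$ is smaller, summing, absorbing $\psi(r',s)\le C_{p,\psi}(\mu,\lambda)$, and finally taking the supremum over $g$ and over $f$ in the relevant unit balls (extending from $L^\infty_c$ by density) yields $\|T_b^m\|_{L^p(\mu)\to L^p(\lambda)}\lesssim K_p(\mu,\lambda)\,C_{p,\psi}(\mu,\lambda)\,\|b\|_{\BMO_\nu}^m$ with $K_p$ as in the statement.

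The technical heart—and expected main obstacle—is the last paragraph: one must track the precise dependence of the intricate exponents $\beta_{\mu_1},\beta_{\lambda_1},\beta_{\lambda_2},\beta_{\mu_2}$ and their dual analogues on $r$ and $s$, confirm their degeneration to the Remark \ref{partcases} values as $r\downarrow1$, $s\uparrow\infty$, and check that the resulting error factors $[w]_{A_\bullet}^{\beta_\bullet(r,s)-\beta_\bullet(1,\infty)}$ remain bounded for the specific weight-dependent choice $r'\eqsim W^{1/(p-1)}$, $s\eqsim W$—which is exactly what dictates the form of $C_{p,\psi}$.
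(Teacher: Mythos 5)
Your proposal is correct and follows essentially the same strategy as the paper's own proof: insert Remark~\ref{dyadversion} to get $\psi(r',s)\sum_j(\mathcal B_{\mathcal S_j,b,r,s}^m(f,g)+\mathcal B_{\mathcal S_j,b,s',r'}^m(g,f))$, apply Theorem~\ref{pleq} with $p=q$ (direct and dual), and fix $r,s$ in terms of $W=\max([\mu]_{A_p},[\lambda]_{A_p})$ via Proposition~\ref{rhweights} so that $[\mu]_{A_{p/r}}\lesssim[\mu]_{A_p}$, $[\lambda]_{{\rm RH}_{(s/p)'}}\lesssim 1$, etc.\ (your $r'\eqsim W^{1/(p-1)}$, $s\eqsim W$ coincide with the paper's $\bar r,\bar s$). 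The only difference in execution is that the paper writes out $C_{p,r,s}(\mu,\lambda)$ explicitly and verifies the bound by direct inspection, whereas you argue via Lipschitz dependence of the $\beta$-exponents on $(r,1/s)$ together with the observation that $(r-1)\log W$ and $s^{-1}\log W$ remain bounded for this choice; both routes yield the same bookkeeping, so this is a presentational rather than substantive deviation.
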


\begin{proof} Fix $f,g \in L^\infty_c(\R^n)$. By Remark \ref{dyadversion}, for any $1<r <2<s<\infty$ there exist $3^n$ dyadic lattices ${\mathscr D}_j$
and sparse families ${\mathcal S}_j\subset {\mathscr D}_j$ such that
\begin{equation}\label{withphi}
\int_{{\mathbb R}^n}|T_b^mf||g|\lesssim  \psi(r',s)\sum_{j=1}^{3^n}\big({\mathcal B}_{{\mathcal S}_j,b,r,s}^m(f,g)+{\mathcal B}_{{\mathcal S}_j,b,s',r'}^m(g,f)\big).
\end{equation}

Suppose that $1<r<\min\ha{\frac{m+1}{m},p}$ and $\max\ha{p,m+1}<s<\infty$. Then $\lfloor rm\rfloor=m$ and $\lfloor s'm\rfloor=m$, and hence, by Theorem \ref{pleq},
$${\mathcal B}_{{\mathcal S}_j,b,r,s}^m(f,g)\lesssim C_{p,r,s}(\mu,\la)\|b\|_{\BMO_{\nu}}^m
\|f\|_{L^p(\mu)}\|g\|_{L^{p'}(\la^{1-p'})},$$
where either
\begin{equation*}
  \begin{aligned}
C_{p,r,s}(\mu,\la)&:= \bigl([\mu]_{A_{p/r}}^{(r-1)m+\frac{m+1}{2r}}[\la]_{A_{p/r}}^{m-\frac{m+1}{2r}} \bigr)^{\max(\frac{1}{r},\frac{1}{p-r})}\\&\hspace{1cm}\cdot
\hab{[\la]_{A_{p/r}}[\la]_{{\rm{RH}}_{(s/p)'}}}^{\max(\frac{s-1}{s-p},\frac{1}{p-r})}.
\end{aligned}
\end{equation*}
or, if $m\geq 2$, alternatively
\begin{equation*}
  C_{p,r,s}(\mu,\la):= [\mu]_{A_{p/r}}^{\frac{rm}{p-r}}\hab{[\la]_{A_{p/r}} [\lambda]_{{\rm{RH}}_{(s/p)'}}}^{\max(\frac{s-1}{s-p},\frac{1}{p-r})}.
\end{equation*}
Moreover, Theorem \ref{pleq} also yields
$${\mathcal B}_{{\mathcal S}_j,b,s',r'}^m(g,f)\lesssim C_{p',s',r'}(\la^{1-p'},\mu^{1-p'})\|b\|_{\BMO_{\nu}}^m
\|g\|_{L^{p'}(\la^{1-p'})}\|f\|_{L^p(\mu)}.$$

Now, let $c_n>0$ be the constant in Proposition \ref{rhweights} and define
\begin{align*}
  \bar r&=  1+\tfrac{1}{p'\cdot c_n}\cdot \max([\mu]_{A_p},[\la]_{A_p})^{-\frac{1}{p-1}} ,\\
  \bar s&=  p\hab{1+ c_n \cdot\max([\mu]_{A_p},[\la]_{A_p})}.
\end{align*}
Then we have for all $1<r\le \bar r$ and $\bar s\le s<\infty$ that
\begin{align*}
  C_{p,r,s}(\mu,\la)&\lesssim \begin{cases}
    \big([\mu]_{A_p}[\la]_{A_p}\big)^{\frac{m+1}{2}\max(1,\frac{1}{p-1})},\qquad&m \geq 1,\\
    [\mu]_{A_p}^{\frac{m}{p-1}}[\la]_{A_p}^{\max\ha{1,\frac{1}{p-1}}}, \qquad &m\geq 2,
  \end{cases}
  \intertext{and, using \eqref{dualityAp}, also}
  C_{p',s',r'}(\la^{1-p'},\mu^{1-p'})&\lesssim \begin{cases}
    \big([\mu]_{A_p}[\la]_{A_p}\big)^{\frac{m+1}{2}\max(1,\frac{1}{p-1})},\qquad&m\geq 1,\\
[\mu]_{A_p}^{\max(1,\frac{1}{p-1})}[\la]_{A_p}^m, \qquad &m\geq 2.
  \end{cases}
\end{align*}
Therefore, if $r =\frac{1}{2}(\min(\frac{m+1}{m},p,\bar r)+1)$ and $s = 2\max(p,m+1,\bar s)$, combining the above estimates with (\ref{withphi}) completes the proof.
\end{proof}

If $T$ and $\mc{M}^{\#}_{T,s}$ are both locally weak $L^1$-bounded, we can take $\psi$ in Theorem \ref{partcase} constant in the first coordinate, which we record as the following corollary.

\begin{cor}\label{r=1}
Let $1<p<\infty$  and $m\in {\mathbb N}$. Let $T$ be a sublinear operator and $b \in L^1_{\loc}(\R^n)$. Assume the following conditions:
\begin{itemize}
\item Suppose that for all $2<s<\infty$, both $T$ and  ${\mathcal M}^{\#}_{T,s}$ are  locally weak $L^1$-bounded, and
$$\f_{\mc{M}^{\#}_{T,s},1}(\la_{m,n})\leq \psi(s),$$
where $\la_{m,n}$ is a constant provided by Theorem \ref{sdp} and $\psi\colon [1,\infty) \to [1,\infty)$ is non-decreasing.
\item Let $\mu,\lambda\in A_{p}$ and define the Bloom weight
$\nu:= (\frac{\mu}{\lambda})^{\frac{1}{pm}}.$
\end{itemize}
Then
$$\|T_b^m\|_{L^p(\mu) \to L^p(\la)}\lesssim K_p(\mu,\lambda) C_{p,\psi}(\mu,\la)\nrm{b}_{\BMO_{\nu}}^m ,$$
where $K_p(\mu,\lambda)$ is as in Theorem \ref{partcase} and
$$C_{p,\psi}(\mu,\la):=\psi\big(c_{p,n,m}\max([\mu]_{A_p},[\la]_{A_p})\big).$$
\end{cor}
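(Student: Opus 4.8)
The plan is to derive Corollary \ref{r=1} as a direct specialization of Theorem \ref{partcase}; the only substantive point is to collapse the two-variable auxiliary function of that theorem to a function of $s$ alone. First I would record that local weak $L^1$-boundedness self-improves: by the remark following Definition \ref{localweak} (an application of H\"older's inequality), if $T$ is locally weak $L^1$-bounded with function $\varphi_{T,1}$, then $T$ is locally weak $L^r$-bounded for every $r>1$ with $\varphi_{T,r}=\varphi_{T,1}$, and likewise $\mc{M}^{\#}_{T,s}$ is locally weak $L^r$-bounded with $\varphi_{\mc{M}^{\#}_{T,s},r}=\varphi_{\mc{M}^{\#}_{T,s},1}$. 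Hence, for all $1<r<2<s<\infty$ and with $\lambda_{m,n}$ the constant from Theorem \ref{sdp},
$$\varphi_{T,r}(\lambda_{m,n})+\varphi_{\mc{M}^{\#}_{T,s},r}(\lambda_{m,n})=\varphi_{T,1}(\lambda_{m,n})+\varphi_{\mc{M}^{\#}_{T,s},1}(\lambda_{m,n})\leq \varphi_{T,1}(\lambda_{m,n})+\psi(s),$$
so the structural hypotheses of Theorem \ref{partcase} are met.

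Next I would package the right-hand side into an admissible auxiliary function. Set $\widetilde{\psi}(x,y):=\varphi_{T,1}(\lambda_{m,n})+\psi(y)$. Since $\psi\geq 1$ and $\varphi_{T,1}\geq 0$, the function $\widetilde{\psi}$ maps $[1,\infty)^2$ into $[1,\infty)$; it is trivially non-decreasing in its first variable and non-decreasing in its second because $\psi$ is; and the display above gives $\varphi_{T,r}(\lambda_{m,n})+\varphi_{\mc{M}^{\#}_{T,s},r}(\lambda_{m,n})\leq\widetilde{\psi}(r',s)$ for all admissible $r,s$. Applying Theorem \ref{partcase} with $\widetilde{\psi}$ in place of $\psi$ therefore yields
$$\|T_b^m\|_{L^p(\mu)\to L^p(\lambda)}\lesssim K_p(\mu,\lambda)\,C_{p,\widetilde{\psi}}(\mu,\lambda)\,\nrm{b}_{\BMO_{\nu}}^m,$$
with $K_p(\mu,\lambda)$ exactly the quantity appearing in Theorem \ref{partcase}, and, because $\widetilde{\psi}$ is independent of its first slot,
$$C_{p,\widetilde{\psi}}(\mu,\lambda)=\varphi_{T,1}(\lambda_{m,n})+\psi\bigl(c_{p,n,m}\max([\mu]_{A_p},[\lambda]_{A_p})\bigr).$$

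Finally I would absorb the stray constant: using $\psi\geq 1$ once more, $C_{p,\widetilde{\psi}}(\mu,\lambda)\leq\bigl(1+\varphi_{T,1}(\lambda_{m,n})\bigr)\,\psi\bigl(c_{p,n,m}\max([\mu]_{A_p},[\lambda]_{A_p})\bigr)$, and the factor $1+\varphi_{T,1}(\lambda_{m,n})$ depends only on $T,m,n$, hence is swallowed by the implicit constant (exactly as the $T$-dependence of $C$ in Theorem \ref{sdp} is). This produces precisely $\|T_b^m\|_{L^p(\mu)\to L^p(\lambda)}\lesssim K_p(\mu,\lambda)\,C_{p,\psi}(\mu,\lambda)\,\nrm{b}_{\BMO_{\nu}}^m$. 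There is no genuine obstacle here: the corollary is a formal consequence of Theorem \ref{partcase}, and the only place requiring care is the elementary bookkeeping just described, namely checking that $\widetilde{\psi}$ satisfies the normalization ($\geq 1$) and monotonicity demanded by Theorem \ref{partcase} and that the residual term $\varphi_{T,1}(\lambda_{m,n})$ is harmless.
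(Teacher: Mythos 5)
Your proof is correct and follows essentially the same route as the paper: reduce to Theorem~\ref{partcase} by noting that local weak $L^1$-boundedness self-improves to local weak $L^r$-boundedness for $r>1$ with the same function, and then apply Theorem~\ref{partcase} with the auxiliary function $\widetilde{\psi}(x,y)=\varphi_{T,1}(\lambda_{m,n})+\psi(y)$. You are in fact slightly more careful than the paper's proof in two inessential respects: you explicitly verify that $\widetilde{\psi}$ satisfies the normalization $\geq 1$ and the monotonicity required by Theorem~\ref{partcase}, and you explicitly record the absorption of the residual additive constant $\varphi_{T,1}(\lambda_{m,n})$ into the implicit constant, a step the paper leaves implicit.
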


\begin{proof}
Since $T$ and ${\mathcal M}^{\#}_{T,s}$ are  locally weak $L^1$-bounded, they are locally weak $L^r$ bounded for all $r>1$, and
\begin{align*}
\f_{T,r}(\la_{m,n})+\f_{\mc{M}^{\#}_{T,s},r}(\la_{m,n})&\leq\f_{T,1}(\la_{m,n})+\f_{\mc{M}^{\#}_{T,s},1}(\la_{m,n})\\
&\le \f_{T,1}(\la_{m,n})+\psi(s).
\end{align*}
Applying Theorem \ref{partcase} with
$$\psi(r',s):=\f_{T,1}(\la_{m,n})+\psi(s)$$
finishes the proof.
\end{proof}

\subsection{Calder\'on--Zygmund operators} \label{subs:CZO} As discussed in the introduction, Bloom weighted estimates for commutators have been widely studied for Calder\'on--Zygmund operators. As a first application of our results, we will compare the weighted estimates that we obtained in the diagonal $p=q$ case for Calder\'on--Zygmmund operators and discuss their sharpness in the sense of Definition \ref{sharp}. In particular, let us prove Theorem \ref{shm1}.

Recall that a linear operator $T$ is called Dini-continuous Calder\'on-Zygmund operator if it is $L^2$-bounded
and for $f \in L^\infty_c(\R^n)$ has a representation
$$Tf(x)=\int_{{\mathbb R}^n}K(x,y)f(y)dy\qquad x\not\in \text{supp}\,f,$$
where
$$|K(x,y)-K(x',y)|+|K(y,x)-K(y,x')|\le \o\left(\frac{|x-x'|}{|x-y|}\right)\frac{1}{|x-y|^n},$$
whenever $|x-y|>2|x-x'|$, with the modulus of continuity $\o:[0,1]\to [0,\infty)$ satisfying $\int_0^1\o(t)\frac{dt}{t}<\infty.$

\begin{proof}[Proof of Theorem \ref{shm1}]
It is well-known that $T$ and ${\mathcal M}^{\#}_{T,\infty}$ are of weak $L^1$-bounded (see, e.g., \cite{Gr14b, LO20}).
Therefore, after normalizing such that $\|b\|_{{\BMO}_{\nu}}=1$, by Corollary \ref{r=1} we have
\begin{equation}\label{prevv}
\|T_b^m\|_{L^p(\mu)\to L^p(\la)}\lesssim ([\la]_{A_p}[\mu]_{A_p})^{\frac{m+1}{2}\max(1,\frac{1}{p-1})}.
\end{equation}
and, if $m\geq 2$, also
\begin{equation}\label{kangweiv}
\|T_b^m\|_{L^p(\mu)\to L^p(\la)}\lesssim [\la]_{A_p}^{\max(1,\frac{1}{p-1})}[\mu]_{A_p}^{\frac{m}{p-1}}+[\mu]_{A_p}^{\max(1,\frac{1}{p-1})}[\la]_{A_p}^{m}.
\end{equation}

First let us take $m\geq 2$.  Then (\ref{kangweiv}) implies for $p\geq 2$ that
$$\|T_b^m\|_{L^p(\mu)\to L^p(\la)}\lesssim [\mu]_{A_p}^{\max(1,\frac{m}{p-1})}[\la]_{A_p}^{m}.$$
If $p>\frac{1+3m}{m+1}$, then $\max(1,\frac{m}{p-1})<\frac{m+1}{2}$, and therefore we obtain that \eqref{prevv} is not sharp in the sense of Definition \ref{sharp}. Moreover, for $1<p\leq 2$  \eqref{kangweiv} implies
$$\|T_b^m\|_{L^p(\mu)\to L^p(\la)}\lesssim [\mu]_{A_p}^{\frac{m}{p-1}}[\la]_{A_p}^{\max(m,\frac{1}{p-1})}.$$
If $p<\frac{1+3m}{2m}$, then $\max(m,\frac{1}{p-1})<\frac{m+1}{2}\frac{1}{p-1}$. Therefore, we again obtain that (\ref{prevv}) is not sharp.

\medskip

Suppose now that $m=1$. Let us show that in this case (\ref{prevv}) is sharp for all $p \in (1,\infty)$.
By duality, the sharpness of the exponent $\max(1,\frac{1}{p-1})$ of $[\la]_{A_p}$ is equivalent to the sharpness of the exponent $\max(1,\frac{1}{p'-1})$ of $[\mu]_{A_{p'}}$.
Therefore, it suffices to establish the sharpness of the exponent $\frac{1}{p-1}$ of $[\la]_{A_p}$ and $[\mu]_{A_p}$ for $1<p\le 2$.

Let $1<p\le 2$. We will provide examples in dimension $n=1$ for the Hilbert transform $H$.
Let us prove first that the exponent $\frac{1}{p-1}$ of $[\la]_{A_p}$ cannot be decreased. Let $0<\d<1$ and define $\mu:=1$ and $\lambda(x):=|x|^{(p-1)(1-\delta)}$. It is well known that $[\la]_{A_p}\eqsim \delta^{1-p}$.
Observe that $\nu=\lambda^{-1/p}=|x|^{\frac{(\delta-1)}{p'}}$. Define $b:=\nu$. It is easy to see that $\|b\|_{\BMO_{\nu}}\le 2$.
Define $f(x):=|x|^{\frac{(\delta-1)}{p}} \chi_{(0,1)}(x)$. Then
$$([\la]_{A_p}[\mu]_{A_p})^{\frac{1}{p-1}}\|f\|_{L^p(\mu)}\eqsim \tfrac{1}{\d^{1+1/p}}.$$
Therefore, the sharpness of the exponent $\frac{1}{p-1}$ would follow if we show that
\begin{equation}\label{leftside}
\|H_b^1\|_{L^p(\la)}\gtrsim \tfrac{1}{\d^{1+1/p}}.
\end{equation}

Observe that for $x \in \R$
\begin{align*}
H_{b}^1f(x)&=|x|^{\frac{(\delta-1)}{p'}} H(|y|^{\frac{(\delta-1)}{p}}\chi_{(0,1)})(x)- H(|y|^{\delta-1}\chi_{(0,1)})(x)\\
&=:h_1(x)-h_2(x).
\end{align*}
By the unweighted $L^p$ boundedness of $H$,
$$\|h_1\|_{L^p(\la)}=\Big(\int_{{\mathbb R}}|H(|y|^{\frac{(\delta-1)}{p}}\chi_{(0,1)})(x)|^pdx\Big)^{1/p}\lesssim \tfrac{1}{\d^{1/p}}.$$
On the other hand, $\frac{1}{\delta} |x|^{\delta-1}\lesssim |h_2(x)|$ for all $x\in(0,1)$, and therefore
$$\tfrac{1}{\delta^{1+1/p}} \lesssim \|h_2\|_{L^p(\lambda)},$$
which proves (\ref{leftside}).

Let us show now that the exponent $\frac{1}{p-1}$ of $[\mu]_{A_p}$ cannot be decreased. The example is very similar.
Define $\la:=1$ and $\mu(x):=|x|^{(p-1)(1-\delta)}$. Then $[\mu]_{A_p}\eqsim \delta^{1-p}$.
Observe that $\nu=\mu^{1/p}=|x|^{\frac{(1-\delta)}{p'}}$. Define $b:=\nu$, for which we have $\|b\|_{\BMO_{\nu}}\eqsim 1$.
Define $f(x):=|x|^{\delta-1} \chi_{(0,1)}(x)$. Then
$$([\la]_{A_p}[\mu]_{A_p})^{\frac{1}{p-1}}\|f\|_{L^p(\mu)}\eqsim \tfrac{1}{\d^{1+1/p}}.$$
Therefore, the sharpness of the exponent $\frac{1}{p-1}$ would follow if we show that
\begin{equation}\label{leftside1}
\|H_b^1\|_{L^p}\gtrsim \tfrac{1}{\d^{1+1/p}}.
\end{equation}

Observe that
\begin{align*}
H_{b}^1f(x)&=|x|^{\frac{(1-\delta)}{p'}}H(|y|^{(\delta-1)}\chi_{(0,1)})(x)-H(|y|^{\frac{\delta-1}{p}}\chi_{(0,1)})(x)\\
&=h_1(x)-h_2(x).
\end{align*}
Exactly as above, $\|h_2\|_{L^p}\lesssim \tfrac{1}{\d^{1/p}}$. On the other hand, $\frac{1}{\d}|x|^{\frac{\d-1}{p}}\lesssim h_1(x)$ for all $x\in (0,1)$, and therefore,
$$\frac{1}{\delta^{1+1/p}} \lesssim \|h_1\|_{L^p(\lambda)},$$
which proves (\ref{leftside1}). This completes the proof.
\end{proof}

As we mentioned in the introduction, Theorem \ref{shm1} leaves open a question about the sharpness of  (\ref{lor19}) when $m\ge 2$ and  $p\in [\frac{1+3m}{2m},\frac{1+3m}{m+1}]$.
Indeed, our example is based on the obvious fact that $\nu\in\BMO_{\nu}$. In the case $m=1$ the choice $b=\nu$ shows the sharpness of (\ref{lor19}) for all $1<p<\infty$.
However, it is easy to check that in the case $m\ge 2$ the same choice is not enough in order to show the sharpness of (\ref{lor19}).

\subsection{Further applications}
We conclude this paper by applying our results to several other concrete examples of operators, for which (quantitative) Bloom weighted bounds of their commutators have not been known before.

Given an operator $T$ and $s\geq 1$, define the (non-sharp) grand maximal truncation operator ${\mathcal M}_{T,s}$ by
$${\mathcal M}_{T,s}f(x):=\sup_{Q\ni x}\Big(\frac{1}{|Q|}\int_Q|T(f\chi_{{\mathbb R}^n\setminus 3Q})|^s\Big)^{1/s}, \qquad x \in \R^n.$$
Observe that
\begin{equation}\label{mp}
{\mathcal M}^{\#}_{T,s}f\le 2\, {\mathcal M}_{T,s}f.
\end{equation}

\begin{example}\label{rsio}
Consider a class of rough homogeneous singular integrals defined by
$$T_{\Omega}f(x)=\text{p.v.}\int_{{\mathbb R}^n}f(x-y)\frac{\O(y/|y|)}{|y|^n}\dd y, \qquad x \in \R^d,$$
for $\O\in L^{\infty}(S^{n-1})$ with zero average over the sphere. Fix $p \in (1,\infty)$, let $m\in \N$ and $b \in L^1_{\loc}(\R^n)$.

It is a well-known result of Seeger \cite{Se96} that $T_{\Omega}$ is of weak type $(1,1)$. Moreover, it was shown by the first author  \cite{Le19} that for $s>2$, the grand maximal truncation operator ${\mathcal M}_{T_{\O},s}$ is weak $L^1$-bounded with
$$\|{\mathcal M}_{T_{\O},s}\|_{L^{1}({\mathbb R}^n)\to L^{1,\infty}({\mathbb R}^n)}\lesssim s.$$
Therefore, by (\ref{mp}), ${\mathcal M}_{T_{\O},s}^{\#}$  satisfies the same bound. From this, by Corollary \ref{r=1}, we obtain for $\mu,\lambda \in A_p$ and $\nu:= (\frac{\mu}{\lambda})^{\frac{1}{pm}}$
$$\|(T_{\O})_b^m\|_{L^p(\mu) \to L^p(\la)}\lesssim C_1(\mu,\la)\nrm{b}_{\BMO_{\nu}}^m,$$
where
$$C_1(\la,\mu):=\max\hab{[\mu]_{A_p},[\la]_{A_p}}K_p(\mu,\lambda)$$
with $K_p(\mu,\lambda)$ as in Theorem \ref{partcase}.

Furthermore, since $T_{\Omega}$ is essentially self-adjoint, we have
$$\|(T_{\O})_b^m\|_{L^p(\mu)\to L^p(\la)}=\|(T_{\O})_b^m\|_{L^{p'}(\la^{1-p'})\to L^{p'}(\mu^{1-p'})}.$$
Using \eqref{dualityAp}, this yields
$$\|(T_{\O})_b^m\|_{L^p(\mu)\to L^p(\la)}\lesssim C_2(\mu,\la)\nrm{b}_{\BMO_{\nu}}^m,$$
where
\begin{align*}
C_2(\la,\mu)&:=\max\hab{[\la^{1-p'}]_{A_{p'}},[\mu^{1-p'}]_{A_{p'}}}K_{p'}(\lambda^{1-p'},\mu^{1-p'})\\
&\phantom{:}=\max\hab{[\mu]_{A_p},[\la]_{A_p}}^{\frac{1}{p-1}}K_p(\mu,\lambda).
\end{align*}
Therefore, we finally obtain that
\begin{align*}
    \|(T_{\O})_b^m\|_{L^p(\mu) \to L^p(\la)}&\lesssim \max\hab{[\mu]_{A_p},[\la]_{A_p}}^{\min(1,\frac{1}{p-1})} K_p(\mu,\lambda)\,\nrm{b}_{\BMO_{\nu}}^m.
\end{align*}
\end{example}

\begin{example}\label{mrsio}
Consider now a class of maximal rough homogeneous singular integrals defined by
$$T_{\Omega}^{\star}f(x)=\sup_{\e>0}\Big|\int_{|y|>\e}f(x-y)\frac{\O(y/|y|)}{|y|^n}dy\Big|, \qquad x \in \R^d,$$
for $\O\in L^{\infty}(S^{n-1})$ with zero average over the sphere. Fix $p \in (1,\infty)$, let $m\in \N$ and $b \in L^1_{\loc}(\R^n)$.

It was shown by Di Plinio--Hyt\"onen--Li \cite{DPHL20} that for $1<r<2$,
\begin{equation}\label{weaktstar}
\|T_{\O}^{\star}f\|_{L^{r,\infty}(\R^n)}\lesssim r'\|f\|_{L^r(\R^n)}.
\end{equation}
Let us deduce from the recent work of Tao--Hu \cite{TH23} that
\begin{equation}\label{locweak}
\f_{\mc{M}^{\#}_{T_{\O}^{\star},s},r}(\la_{m,n})\lesssim s\log r'.
\end{equation}

Denote $\Phi(t):=t\log\log({\rm e}^2+t)$. In \cite{TH23} the authors established that for $s >2$ and $\alpha>0$,
$$
|\{x\in {\mathbb R}^n: {\mathcal M}_{T_{\O}^*,s}f(x)>\a\}|\lesssim s\int_{{\mathbb R}^n}\Phi\left(\frac{|f|}{\a}\right)dx.
$$
From this, using the estimate
$$\Phi(t)\lesssim (\log r')t+t^r,\qquad t>0$$
for $1<r<2$, we obtain
$$
|\{x\in Q: {\mathcal M}_{T_{\O}^*,s}(f\chi_Q)(x)>\a\}|\lesssim s\Big((\log r')\int_Q\frac{|f|}{\a}+\int_Q\Big(\frac{|f|}{\a}\Big)^r\Big).
$$
Hence, for $\lambda \in (0,1)$, we have
$$
\absb{\cbraceb{x\in Q: {\mathcal M}_{T_{\O}^*,s}(f\chi_Q)(x)>\tfrac{s\log r'}{\la}\langle|f|\rangle_{r,Q}}}\lesssim \la|Q|.
$$
Along with (\ref{mp}), this implies (\ref{locweak}).

Using (\ref{weaktstar}) and (\ref{locweak}), we are in position to apply Theorem~\ref{partcase} with $\psi(r',s):=r'+(\log r')s$, from which it follows that for $\mu,\lambda \in A_p$ and $\nu:= (\frac{\mu}{\lambda})^{\frac{1}{pm}}$
$$\|(T_{\O}^{\star})_b^m\|_{L^p(\mu) \to L^p(\la)}\lesssim \hab{t^{\frac{1}{p-1}}+t\log t } K_p(\mu,\lambda)\nrm{b}_{\BMO_{\nu}}^m $$
with $K_p(\mu,\lambda)$ as in Theorem~\ref{partcase} and $t = \max\hab{[\mu]_{A_p},[\la]_{A_p}}$
\end{example}

\begin{example}\label{br}
Consider the Bochner-Riesz operator at the critical index $B_{(n-1)/2}$, which is defined by
$$
(B_{(n-1)/2}f)\,\widehat\,\,(\xi):=(1-|\xi|^2)_{+}^{\frac{n-1}{2}}\widehat f(\xi), \qquad \xi \in \R^n.
$$
It is a well-known result of Christ \cite{Ch88} that $B_{(n-1)/2}$ is of weak $L^1$-bounded. Furthermore, it is implicit in the work of Shrivastava--Shuin \cite{SS20,SS21} that for $s \in [1,\infty)$
\begin{equation}\label{brs}
\|{\mathcal M}_{B_{(n-1)/2},s}\|_{L^{1}({\mathbb R}^n)\to L^{1,\infty}({\mathbb R}^n)}\lesssim s.
\end{equation}
Therefore, arguing as in Example \ref{rsio}, we have for $p \in (1,\infty)$, $m\in \N$ and $b \in L^1_{\loc}(\R^n)$ that for all $\mu,\lambda \in A_p$ and $\nu:= (\frac{\mu}{\lambda})^{\frac{1}{pm}}$
\begin{align*}
    \|(B_{(n-1)/2})_b^m\|_{L^p(\mu) \to L^p(\la)}&\lesssim\max\hab{[\mu]_{A_p},[\la]_{A_p}}^{\min(1,\frac{1}{p-1})} C_p(\mu,\lambda)\,\nrm{b}_{\BMO_{\nu}}^m.
\end{align*}
with $C_p(\mu,\lambda)$ as in Theorem \ref{partcase}.

We add some details about (\ref{brs}). It is well-known (see, e.g., \cite[Section 5.2.1]{Gr14b}) that the kernel of $B_{(n-1)/2}$ is given by
$K_{(n-1)/2}:=K+\Phi$, where
$$K(x):=c_n\frac{\cos(2\pi|x|-\pi n/2)}{|x|^n}\chi_{\{|x|\ge 1\}}, \qquad x \in \R^n,$$
and $|\Phi(x)|\lesssim \frac{1}{1+|x|^{n+1}}$. Next, let $\phi$ be a radial smooth function supported in $B(0,2)$ such that $\phi=1$ on $B(0,1)$ and set $$\psi_j(x):=\phi(2^{-j}|x|)-\phi(2^{-j+1}|x|), \qquad x \in \R^n.$$ Then we obtain that
$$B_{(n-1)/2}f(x)=T_1f(x)+T_2f(x)+Tf(x),$$
where
\begin{align*}
  T_1f(x)&:=f*\Phi(x),&&x \in \R^n,\\ T_2f(x)&:=f*(\phi K)(x), &&x \in \R^n,\\
  Tf(x)&:=\sum_{j=1}^{\infty}f*(\psi_jK)(x), &&x \in \R^n.
\end{align*}

Using that $|T_if|\lesssim Mf$ for $i=1,2$, we obtain
$${\mathcal M}_{B_{(n-1)/2},s}f(x)\lesssim Mf(x)+{\mathcal M}_{T,s}f(x), \qquad x \in \R^n.$$
Therefore, it suffices to prove (\ref{brs}) for ${\mathcal M}_{T,s}$. In order to do that, we fix an integer $N$ and further decompose $T=S_1+S_2$, where
$$S_1f(x):=\sum_{j=1}^Nf*(\psi_jK)(x),\qquad  S_2f(x):=\sum_{j=N+1}^{\infty}f*(\psi_jK)(x).$$
Let $\e:=2^{-N}$. It was shown in \cite{SS21} that $S_1$ is a Dini-continuous Calder\'on-Zygmund operator with Dini-constant bounded by $\log\frac{1}{\e}$, and that $\|S_2\|_{L^2\to L^2}\lesssim \e^{\a}$
for some $\a\in (0,1]$. Moreover, the kernels $\psi_jK$ have radial smoothness (as was observed in \cite{LPRR19}), which makes it possible to apply Seeger's machinery from \cite{Se96}.
Using all these ingredients, the proof goes through as in \cite{Le19} by the first author.
\end{example}

\begin{remark}
Let  $p,q \in (1,\infty)$, $m\in \N$ and $b \in L^1_{\loc}(\R^n)$.
  By a similar argument as employed in Subsection \ref{subs:CZO} and Examples \ref{rsio}, \ref{mrsio} and \ref{br}, now using Theorem~\ref{wgeneral}, we also get for all $\mu \in A_p$ and $\lambda \in A_q$ and $$\nu^{1+\frac{1}{pm}-\frac{1}{qm}}:= \mu^{\frac{1}{pm}}\lambda^{-\frac{1}{qm}}$$
  that we have
\begin{align*}
    \|T_b^m\|_{L^p(\mu) \to L^q(\la)}&\lesssim_{\mu,\lambda} \begin{cases}
      \nrm{b}_{\BMO^{\alpha n}_{\nu}}^m, \qquad  \qquad & p\leq q,\\
      \nrm{M^{\#}_\nu (b)}_{L^t(\nu)}^m, & q \leq p.
    \end{cases}
\end{align*}
with $\alpha := \frac{1}{pm}-\frac1{qm}$,  $\frac{1}{t} := \frac{1}{qm}-\frac1{pm}$ and $T$ is either a Calder\'on--Zygmund operator or  $T \in \cbrace{ T_\Omega, T_\Omega^*, B_{(n-1)/2}}$.
We leave the exact dependence on $[\mu]_{A_p}$ and $[\lambda]_{A_q}$ in these cases to the interested reader.
\end{remark}

\bibliographystyle{plain}
\bibliography{commutatorbib}

\end{document}